\definecolor{darkergreen}{rgb}{0.0, 0.5, 0.0}
\numberwithin{equation}{section}
\newcommand{\be}{\begin{eqnarray}}
\newcommand{\ee}{\end{eqnarray}}
\newcommand{\ce}{\begin{eqnarray*}}
\newcommand{\de}{\end{eqnarray*}}
\newtheorem{theorem}{Theorem}[section]
\newtheorem{lemma}[theorem]{Lemma}
\newtheorem{remark}[theorem]{Remark}
\newtheorem{definition}[theorem]{Definition}
\newtheorem{proposition}[theorem]{Proposition}
\newtheorem{corollary}[theorem]{Corollary}
\newtheorem{assumption}{Assumption}[section]
\newcommand{\update}[1]{{#1}}
\tikzset{
        dot/.style={circle,fill=black,inner sep=0pt, outer sep=0.7pt, minimum size=1mm},
        Phi/.style={white!40!red,thick,snake=coil,segment amplitude=0.6pt, segment length=2pt},
         Z/.style={black!40!green,thick,snake=coil,segment amplitude=0.6pt, segment length=2pt},
        C/.style={thick,black!20!blue},
          Cr/.style={thick,black!20!red},
            Cg/.style={thick,black!20!green},
       }
\begin{document}
\newcommand{\Q}{\mathbb{Q}}
\newcommand{\R}{\mathbb{R}}
\newcommand{\cG}{\mathcal{G}}
\newcommand{\I}{\mathbb{I}}
\newcommand{\Z}{\mathbb{Z}}
\newcommand{\N}{\mathbb{N}}
\newcommand{\F}{\mathcal{F}}
\newcommand{\p}{\mathbb{P}}
\newcommand{\XX}{\mathbb{X}}
\newcommand{\M}{\mathcal{M}}
\newcommand{\Pp}{\mathcal{P}}
\newcommand{\W}{\mathcal{W}}
\newcommand{\ED}[1]{{#1}^\epsilon_\delta}
\newcommand{\EDM}[1]{{#1}^\epsilon_{\delta,m}}
\newcommand{\EOM}[1]{{#1}^\epsilon_{0,m}}
\newcommand{\EDMg}[1]{{#1}^{\epsilon,g}_{\delta,m}}
\newcommand{\RR}{\mathcal{R}}
\newcommand{\E}{\mathbb{E}}
\newcommand{\h}{\mathcal{H}}
\newcommand{\cL}{\mathcal{L}}
\newcommand{\Ii}{\mathrm{I}}
\newcommand{\eps}{\epsilon}
\newcommand{\supp}{\mathrm{supp}}
\newcommand{\law}{\mathrm{Law}}
\newcommand{\leb}{\lambda}
\newcommand{\id}{\mathrm{id}}
\newcommand{\pr}{\mathrm{pr}}
\newcommand{\Ss}{\mathfrak{S}}
\newcommand{\inter}{\mathrm{int\,}}
\newcommand{\dhcomment}[1]{\textbf{\textcolor{red}{#1}}}
\newcommand{\PP}{\mathbb{P}}
\newcommand{\dhedit}[1]{{#1}}


\title[Landau-Lifshitz-Navier-Stokes Equations]{\Large Landau-Lifshitz-Navier-Stokes Equations: Large Deviations and Relationship to The Energy Equality}

\author{Benjamin Gess}
\author{Daniel Heydecker}
\author{Zhengyan Wu}

\address{B. Gess: Fakultat f\"ur Mathematik, Universit\"at Bielefeld, 33615 Bielefeld, Germany \\ Max Planck Institute for Mathematics in the Sciences, 04103 Leipzig, Germany.}
\email{Benjamin.Gess@mis.mpg.de}
\address{D. Heydecker: Max Planck Institute for Mathematics in the Sciences, 04103 Leipzig, Germany.}
\email{daniel.heydecker@mis.mpg.de}

\address{Z. Wu: Faculty of Mathematics, University of Bielefeld, 33615 Bielefeld, Germany.}
\email{zwu@math.uni-bielefeld.de}
\subjclass[2010]{35Q84,60F10 (primary), 60K35, 82B21, 82B31, 82B35.}

\keywords{Stochastic PDEs, Large deviations, Navier-Stokes equations}

\begin{abstract}The dynamical large deviations principle for the three-dimensional incompressible Landau-Lifschitz-Navier-Stokes equations is shown, in the joint scaling regime of vanishing noise intensity and correlation length. This proves the consistency of the large deviations in lattice gas models \cite{QY}, with Landau-Lifschitz fluctuating hydrodynamics \cite{LL87}. Secondly, in the course of the proof, we unveil a novel relation between the validity of the deterministic energy equality for the deterministic forced Navier-Stokes equations and matching large deviations upper and lower bounds. In particular, we conclude that time-reversible uniqueness to the forced Navier-Stokes equations implies the validity of the energy equality, thus generalising the classical Lions-Ladyzhenskaya result. Thirdly, we prove that no non-trivial large deviations result can be true for local-in-time strong solutions.

\end{abstract}

\maketitle

\setcounter{tocdepth}{1}
\tableofcontents
\section{Introduction}  \label{sec-1} This paper is dedicated to the study of the large deviations for $\varepsilon,\delta\to 0$ of the Landau-Lifschitz-Navier-Stokes equations
\begin{equation}\label{SNS-1}
\partial_t u=\Delta u-(u\cdot\nabla)u-\nabla p-\sqrt{\epsilon}\nabla\cdot\xi_{\delta}, \ \ {\rm{div}}(u)=0
\end{equation} posed on $x\in \mathbb{T}^3, t\ge 0$, and their relation to the open problem of anomalous dissipation for the forced Navier-Stokes equations
\begin{equation}\label{control}
\partial_tu=\Delta u-(u\cdot\nabla)u-\nabla p-\nabla\cdot g , \ \ {\rm{div}}(u)=0
\end{equation}
with force in the Leray class $\nabla\cdot g \in L^2_tH^{-1}_x$. Here, $p$ is the pressure, and $\xi_\delta$ denotes the mollification of a matrix-valued divergence-free white noise $\xi$ on a spatial scale $\delta$. 

\medskip\noindent
A central motivation of the present work, and the first main result, is to show the consistency of the macroscopic fluctuation theory (MFT) associated to the lattice gas model of the Navier-Stokes equations introduced by Quastel and Yau \cite{QY} with the large deviations associated to the equations  (\ref{SNS-1}) of fluctuating hydrodynamics (FHD). Generally speaking, MFT introduces a general framework for describing the fluctuations of observables far from equilibrium \cite{BDGJL,Derrida} by postulating large deviation rate principles for diffusive systems.  On the other hand, the theory of FHD provides a framework for modelling microscopic fluctuations in a manner consistent with statistical mechanics and non-equilibrium thermodynamics by means of stochastic differential equations (SPDEs) with conservative noise \cite{LL87,HS,FG22, FG21}.
In the context of fluid dynamics, the rate functions of MFT were introduced Quastel and Yau in \cite{QY}, and the FHD equations \eqref{SNS-1}, with $\delta=0$, were introduced \cite{LL87} to describe thermodynamic fluctuations in fluids. 
While on an informal level MFT and FHD are linked via large deviations principles for the FHD equations, the rigorous justification of this is largely open.  
\\  
This programme is achieved for fluctuations about the incompressible Navier-Stokes equations in\textbf{ Theorem \ref{thrm: LDP}} below, by showing that the large deviations of (\ref{SNS-1}) are governed by the same rate function $\mathcal{I}$ as in \cite[Theorem 2]{QY} on sufficiently integrable fluctuations.

\medskip\noindent  An important part of the large deviation lower bound arguments 
are uniqueness results for the perturbed limit equation, given in the present setting by (\ref{control}), see for example \cite{FG22,DH} for a discussion of the problem. When uniqueness is not available, one can argue instead via the explicit identification of the lower semicontinuous envelope of the rate function restricted to a smaller set of paths. For example, this program was achieved for purely hyperbolic scaling laws via a Young measure approach by Bellettini et al. \cite{bellettini2010gamma} and Mariani \cite{mariani2010large}. For the equation (\ref{SNS-1}), neither of these approaches are available. Indeed,  Albritton et al. \cite{albritton2022non} demonstrated nonuniqueness of solutions to (\ref{control}) in the Leray class $L^\infty_tL^2_x\cap L^2_tH^1_x$, while attempting to generalise the argument of \cite{bellettini2010gamma,mariani2010large} is hindered by the relative gain of regularity through the diffusive term. We instead develop an LDP analysis based on \emph{weak-strong uniqueness}. This extension of tools of weak-strong uniqueness to the field of large deviations appears to be new. 
As a result,  we prove matching large deviations bounds on sets of solutions that are characterised by certain extensions of weak-strong uniqueness classes, see Definition \ref{def: wk stron uniqueness class}. For concreteness we show that this includes the choice $\mathcal{C}:=L^4([0,T]\times\mathbb{T}^3, \mathbb{R}^3)$.  

\medskip\noindent  

One of the key technical difficulties in this approach to large deviations is taking the limit of the nonlinear convection $(u\cdot \nabla)u$ appearing in the (stochastic) PDE. In the present context, this is achieved through working in a well-chosen path space $\XX$, which transfers these difficulties into proving sufficiently strong compactness estimates on the solutions to (\ref{SNS-1}). This allows to concentrate the effort on the aspects which are specific to (\ref{control}) and the weak-strong uniqueness approach developed in this work. 

\medskip\noindent  Theorem \ref{thrm: LDP} below proves that the dynamic large deviations of (\ref{SNS-1}) are governed by the skeleton Navier-Stokes equation \eqref{control} with a control $g$ ranging over $L^2_tL^2_x$. In the course of the proof, we uncover a novel link between the problem of large deviations for (\ref{SNS-1}) 
and key problems in nonlinear PDEs, namely, the weak-strong uniqueness and the problem of anomalous dissipation for the forced Navier-Stokes equations (\ref{control}). Based on insight from the large deviations principle, this leads to a second main result, \textbf{Theorem \ref{thrm: Energy Equality}}, proving that for the deterministic, forced Navier-Stokes equations the energy equality holds on certain closures of weak-strong uniqueness classes. This provides a first systematic answer to the open problem posed in \cite{berselli2020energy} on how (weak-strong) uniqueness of solutions is related to the energy equality. We will next comment on this  challenge. 

\medskip\noindent 
Leray solutions to (\ref{control}) satisfy an energy \emph{inequality} \begin{equation}
	\label{eq: EI} \frac12\|u(t)\|_{L^2(\mathbb{T}^3)}^2+\int_0^t \|u(s)\|_{\dot{H}^1(\mathbb{T}^3)}^2 ds\le \frac12\|u(0)\|_{L^2(\mathbb{T}^3)}^2- \int_0^t \langle \nabla u, g\rangle ds 
\end{equation} for all $0\le t\le T$. This energy (in)equality may be formally derived by multiplying (\ref{control}) by $u$ itself and integrating by parts, but the available \emph{a priori} estimates do not offer enough regularity to make this calculation rigorous. In general, the question of which conditions, if any, are necessary to guarantee that Leray solutions satisfy the \emph{equality} case of (\ref{eq: EI}) is an open problem: if the inequality is strict, then the solution exhibits \emph{anomalous dissipation}. It is a celebrated result of Lions-Ladyzhenskaya \cite{Lionsjl,PG,KL} that $u\in L^4_tL^4_x$ is sufficient to guarantee the energy equality; other conditions of type $L^p_tL^q_x$ have been found by Shinbrot \cite{shinbrot1974energy} and da Veiga-Beirao-Yang \cite{da2020shinbrot} and conditions of type $L^p_tW^{\alpha, q}_x$ have been found by Bersilli-Chandroli \cite{berselli2020energy}, da Veiga-Beirao-Yang \cite{da2019energy} and Zhang \cite{zhang2019remarks}. \\
%
In comparison, the known sufficient conditions for weak-strong uniqueness of solutions are more restrictive:
for instance, the conditions in \cite{shinbrot1974energy,Lionsjl,PG,KL,da2020shinbrot} together cover all cases of the Ladyzhenskaya-Prodi-Serrin condition for uniqueness \cite{RRS,WE}. In addition, the counterexamples to uniqueness for the forced Navier-Stokes equations  \cite{albritton2022non} satisfy the energy equality. In particular the energy equality alone cannot guarantee uniqueness. \\
This leaves open the problem of whether a form of (weak-strong) uniqueness implies the energy equality, see \cite{berselli2020energy}. This is addressed by Theorem \ref{thrm: Energy Equality}, by demonstrating 
that the energy equality (\ref{eq: EI}) holds on certain closures of weak-strong uniqueness classes. The method and result are abstract, and in particular apply to any of the commonly investigated types $L^p_tL^q_x, W^{\alpha,p}_tW^{\beta,q}_x, L^p_tC^\alpha_x$. 

\medskip\noindent 
Another novel observation of this work is a direct link between the approach to the large deviations lower bound via lower semicontinuous envelopes, and the question of the energy (in)equality and {anomalous dissipation} in the forced Navier-Stokes equations (\ref{control}). As a consequence of Theorem \ref{thrm: Energy Equality}, in Remark  \ref{rmk: violations of EI}, we show
 that the existence of solutions to (\ref{control}) violating the energy equality at the terminal time $t=T$ 
  leads to violations of the large deviations lower bound. \\
 Notably, in the forced setting, solutions violating the energy equality with forces in $L^{2-\epsilon}_tH^{-1}_x$ are known, see \cite{CL}, which are thus only $\epsilon$ less regular than those  appearing in the large deviations.

 \medskip \noindent  
 The third main result of this work addresses the (im)possibility of using stronger notions of solutions to the Landau-Lifschitz-Navier-Stokes equations in the proof of large deviations principles.\\
 While in the deterministic theory, the problems of nonuniqueness and energy equalities have been resolved under additional restrictions such as short times or the smallness of the initial data \cite{Cao-Titti,kochtataru}, we demonstrate that the context of large-deviations for (\ref{SNS-1}) makes the use of any such theory impossible. In the third main result \textbf{Theorem \ref{thm: LDPstrong}}, we prove that at speed $\epsilon^{-1}$ with negligible large deviations cost, and in arbitrarily fast time, the $H^1(\mathbb{T}^3)$-norm of fluctuations of \eqref{SNS-1} becomes arbitrarily large. As a consequence, no non-trivial large deviations principle for strong solutions can be true at this speed. At any slower large deviations speed, however, the only limits with finite rate are initial segments of the maximal strong solution to the Navier-Stokes equations, again excluding any non-trivial large deviations principle for strong solutions.

   \medskip \noindent 
  The central problem in the study of blowup at the large deviation level is as follows: given a local, strong solution $\bar{u}(s), 0\le s<\tau$ to (\ref{control}) for a control $g$ and a time $0<t<\tau$, one must perturb $g$ to $g'$ such that the corresponding $\bar{u}'$ is close to $\bar{u}$ on $[0,t)$, but has become large in the stronger norm at time $t$. However, in general, in light of the supercritical nonlinearity, answering the question how much regularity solutions to (\ref{control})  for a given $g$ have appears intractable. The key observation here is that loss of regularity is related to the interplay between the dissipation term $\Delta u$ and the rough forcing $\nabla\cdot g, g\in L^2_t L^2_x$, rather than the nonlinear convective term $u\cdot \nabla u$. Indeed, for the same class of $g$, the linear evolution 
    \begin{equation}\label{eq: linear control} 
       \partial_t v = \Delta v-\nabla \cdot g
    \end{equation} 
    is already critical for $u\in L^2(\mathbb{T}^3)$, and supercritical for the norms $L^3(\mathbb{T}^3)$, ${\rm BMO}^{-1}(\mathbb{T}^3)$ which are scaling-critical for the Navier-Stokes equations. This observation will allow us to explicitly construct $g', \bar{u}'$ in Section \ref{sec-9}, which ultimately leads to a proof of Theorem \ref{thm: LDPstrong}.
    
    

\section{Statement of Results}

\subsection{Definitions} We now give some definitions of the objects which will be in use throughout the paper: function spaces of constant use, a path space $\XX$ of Leray-type regularity, and the precise meaning of the white noise and Leray-type solution theory of (\ref{SNS-1}). Two definitions, of local strong solutions to (\ref{SNS-1}) and an accompanying path space $\XX_{\rm st, loc}$, are defered until they are first used in Section \ref{sec-9}.
\paragraph{\textbf{Function Spaces}} Let us first introduce some function spaces of frequent use in the sequel. Denoting $\mathcal{X}_{\mathrm {d.f}}:=\{f\in C^{\infty}(\mathbb{T}^3;\mathbb{R}^3),\ \nabla\cdot f=0\}$ for the space of divergence-free test functions, let us take $H$ to be the closure of $\mathcal{X}_{\mathrm {d.f}}$ in $L^2(\mathbb{T}^3, \mathbb{R}^3)$. We denote by $\langle\cdot,\cdot\rangle$ the inner product of $H$, and write $\mathbf{P}$ for the Leray projection from $L^2(\mathbb{T}^3, \mathbb{R}^3)$ to $H$. We next define the space $\mathcal{M}$ as the space of divergence-free, matrix-valued $L^2$-functions: \begin{equation}
	\mathcal{M}:=\left\{g\in L^2(\mathbb{T}^3,\mathbb{R}^{3\times 3}): \sum_{j=1}^d \partial_j g_{ij}=0 \right\}
\end{equation}understanding the divergence-free condition in the sense of distributions, and we equip $\mathcal{M}$ with the restriction of the $L^2(\mathbb{T}^3, \mathbb{R}^{3\times 3})$ norm $\|g\|_{\mathcal{M}}:=\|g\|_{L^2(\mathbb{T}^3, \mathbb{R}^{3\times 3})}$. Finally, we define a further seminorm $
\|u\|_V:=\|\nabla u\|_{\mathcal{M}}$, allowing the value $\infty$. We write $V\cap H$ for the subspace of $H$ where this is finite, which we equip with the (Hilbert) norm\footnote{The norm $\|\cdot\|_{H\cap V}$ is thus exactly the Sobolev $H^1(\mathbb{T}^3)$-norm, and the seminorm $\|\cdot\|_V$ is the homoegenous part. The fact that $\|u\|_{L^2([0,T],V)}$ fails to be positive definite will not cause problems in the sequel.} $\|u\|_{V\cap H}^2:=\|u\|_H^2+\|u\|_V^2$. We write $L^2([0,T], H\cap V), L^2([0,T],V)$ for the vector spaces of measurable functions $[0,T]\to H$ for which the functionals $$ \|u\|_{L^2([0,T],V)}^2:=\int_0^T \|u(t)\|_V^2 dt; \qquad \|u\|_{L^2([0,T],H\cap V)}^2:=\int_0^T \|u(t)\|_{H\cap V}^2 dt $$ are finite. We will take a basis of $H$ consisting of the eigenvectors $\{e_\zeta\}_{\zeta\in \mathcal{B}}$ of the Stokes operator $A:=-\mathbf{P}\Delta$, indexed by $\mathcal{B}:=((\mathbb{Z}^3\setminus 0) \times \{\pm 1\})\sqcup (\{0\}\times\{\pm 1, 0\})$, where each $e_{\zeta}, \zeta=(k, \theta)$ is a normalised plane wave of frequency $2\pi k$; the corresponding eigenvalue is $\lambda_\zeta=4\pi^2 |k|^2$. We define a Galerkin projection $P_m$ onto the span of $e_\zeta, \zeta \in \mathcal{B}_m$ of those eigenfunctions with frequency $|k|\le m$:\begin{equation}\label{eq: Pm}
P_mf:=\sum_{\zeta\in \mathcal{B}_m} \langle f,e_{\zeta}\rangle e_{\zeta}.
\end{equation}  

\paragraph{\textbf{Path Space and its topologies}} We may now define the path space $\XX$ in which we consider large deviations. We define the path space to be \begin{equation}\label{eq: def X} \XX:=L^2([0,T],H)\cap C([0,T],(H,\mathrm{w}))\cap L^2_{\rm w}([0,T],H\cap V) \end{equation} and where $w$ denotes the weak topology of $H$, respectively $L^2([0,T],H\cap V)$, and where $C([0,T], (H,w))$ is the locally convex topology defined by the maps $u\mapsto \sup_{t\le T}|\langle \varphi, u\rangle|, \varphi\in H$.  We will occasionally work with a slight strengthening of the topology, in which we add the basic open sets induced by the seminorm $u\mapsto \|u(0)\|_H$. The spaces $\XX, \XX^+$ defined in this way are separable and Haussdorf. \paragraph{\textbf{Divergence-Free White Noise and Regularisation}}  \label{sec: approximation for WN}
Throughout, we will use the notation $(\Omega,\mathcal{F},\{\mathcal{F}_t\}_{t\in
[0,T]}, \mathbb{P})$ for a stochastic basis on which are defined a set of standard, one-dimensional Brownian motions $(\{\beta^{\zeta,i}(t)\}_{t\in[0,T]})_{\zeta\in \mathcal{B}, 1\le i\le 3})$. We write $\mathbb{E}$ for the expectation defined by $\mathbb{P}$. Without loss of generality, the filtration $\{\mathcal{F}_t\}_{t\in [0,T]}$ is assumed to be complete. \\\\We now formulate the noise term appearing in (\ref{SNS-1}). Let $\{\beta^{\zeta,i}\}_{\zeta \in \mathcal{B}, 1\le i\le 3}$ be a sequence of independent, standard Brownian motions in $\mathbb{R}$, and define $W=(W^i)_{1\le i\le 3}$ by
\begin{equation}\label{eq: Wi}
W^i(t):=\sum_{\zeta \in \mathcal{B}}\beta^{\zeta, i}(t)e_\zeta(x).
\end{equation}
 This will correspond to the noise $\xi$ appearing in (\ref{SNS-1}) through $\xi:=\frac{dW}{dt}$, although we will use $W$ in the sequel. We will use the terminology `divergence-free white noise' for the (matrix) white noise constructed in this way\footnote{It can be verified that the covariance structure of the Gaussian thus defined is the same as that in \cite{bell2007numerical}, which justifies our description of (\ref{SNS-1}) as the same equation studied therein.}. We remark that the Cameron-Martin space of the unreglarised, divergence-free white noise $W$ given by (\ref{eq: Wi}) is $L^2([0,T],\mathcal{M})$.   \medskip \\ We next specify a regularisation of the noise in space. We choose the regularisation $W_\delta:=\sqrt{Q_\delta}W$, where $Q_\delta$ is the operator on $H$ defined by defined by

\begin{equation}\label{Qdelta}
Q_{\delta}:=(I+\delta A^{2\beta})^{-1}.
\end{equation} \paragraph{\textbf{A Solution Class to (\ref{SNS-1})}} With above conditions, we define the class of weak solutions of (\ref{SNS-1}) with which we work. The solution theory mimics the classical arguments of Leray, and we therefore call the solution class \emph{stochastic Leray} solutions. 
\begin{definition}\label{weak-solution} Fix $\epsilon, \delta, T>0$. We say that a progressively measurable process $\ED{U}$, defined on a stochastic basis $(\Omega,\mathcal{F},\{\mathcal{F}(t)\}_{t\in[0,T]},\mathbb{P})$ equipped with a divergence-free white noise $W$ and an $H$-valued random variable $U_0$ independent of $W$, is a \emph{stochastic Leray solution started at $U_0$} if it ${\mathbb{P}}$-almost surely takes values in $\XX$, satisfies the equation (\ref{SNS-1}) in the sense that, for every $\varphi\in \mathcal{X}_{\rm d.f.}$, $\mathbb{P}$-almost surely, for all $0\le t\le T$,
\begin{equation} \begin{split}\label{weaksense}
\langle U^{\epsilon}_{\delta}(t),\varphi\rangle+\int_0^t\langle \nabla U^{\epsilon}_{\delta}(s),\nabla\varphi\rangle ds=\langle U_0,\varphi\rangle-\int_0^t\langle(U^{\epsilon}_{\delta}\cdot\nabla)U^{\epsilon}_{\delta},\varphi\rangle ds+\sqrt{\epsilon}\int_0^t \langle  \nabla \varphi, dW_\delta(r)\rangle 
\end{split} \end{equation} and satisfies the trajectorial energy inequality: $\mathbb{P}$-almost surely, for all $0\le t\le T$,
\begin{equation}\begin{split}\label{eq: TEI}
\frac{1}{2}\|U^{\epsilon}_{\delta}(t)\|_{H}^2\leq&\frac{1}{2}\|U_0\|_{H}^2-\int_{0}^{t}\| U^{\epsilon}_{\delta}(r)\|_{V}^2dr\\
&-\sqrt{\epsilon}\int_0^t\langle U^{\epsilon}_{\delta},\nabla\cdot dW_{\delta}(r)\rangle+ \frac{\epsilon}{2}\|A^{1/2}\circ\sqrt{Q_{\delta}}\|_{HS}^2t. \end{split}\end{equation}\end{definition}

\update{Let us remark that the solution theory will be slightly different from those of \cite{FG95,flandoliromito2008}; we defer a discussion of this point to the literature review.} 

\subsection{Main Results}
  We first give a result on existence. \begin{proposition}[Proposition \ref{existence}]\label{prop: main existence} For any $\epsilon, \delta>0$, there exists a stochastic basis $(\Omega, \mathcal{F}, (\mathcal{F}_t)_{t\ge 0}, \mathbb{P})$ on which are defined a divergence-free space-time white noise $W$ and a stochastic Leray solution $\ED{U}$ to (\ref{SNS-1}) for the white noise $W$. Under integrability conditions, $\ED{U}$ can be constructed so that $\ED{U}(0)$ has any given desired law, independently of the divergence-free white noise $W$.  \end{proposition}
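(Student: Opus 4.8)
The plan is to follow the classical Leray scheme in a stochastic setting: construct solutions of the Galerkin system (\ref{project-m}), derive energy estimates uniform in $m$, extract a limit by a tightness/Skorokhod argument, identify it as a weak solution of (\ref{SNS-1}) in the sense of Definition \ref{weak-solution}, and finally transfer the energy \emph{equality} satisfied by the Galerkin solutions to an energy \emph{inequality} for the limit by (weak) lower semicontinuity. The initial data is built into the approximation on the original space with the prescribed law, independent of the driving noise, so this independence is carried through the construction.

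First, for fixed $\eps,\delta,m$ the system (\ref{project-m}) is an SDE on the finite-dimensional space $P_m H$ with locally Lipschitz, at most quadratic drift and additive, bounded noise, hence it has a unique maximal strong solution. Itô's formula applied to $\|\EDM{U}(t)\|_H^2$, together with the cancellation $\langle B_m(u),u\rangle=0$ of the projected nonlinearity and the dissipativity of the Stokes operator, yields the basic energy identity; the Itô correction from the forcing equals $\eps$ times the trace of the covariance of $P_m\nabla\cdot\xi^\delta$, which is \emph{finite, uniformly in $m$}, precisely because the regularisation $\xi\mapsto\xi^\delta$ gains $\beta>\tfrac54$ derivatives (Section \ref{sec-2}). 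This rules out explosion, giving global solutions, and, after Gr\"onwall and the Burkholder--Davis--Gundy inequality,
\[
\E\Big[\sup_{t\le T}\|\EDM{U}(t)\|_H^2\Big]+\E\int_0^T\|\EDM{U}(t)\|_V^2\,dt\ \lesssim\ \E\|\EDM{U}(0)\|_H^2+\eps T,
\]
and analogous higher-moment bounds under the stated integrability of $U_0$, with constants depending only on the prescribed law of the initial datum.

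Next, these bounds give tightness of the laws of $\EDM{U}$ on $\XX$: the $L^2_\omega L^\infty_t H\cap L^2_\omega L^2_t V$ control handles the two weak-topology factors directly, while a fractional-in-time Sobolev estimate for $\EDM{U}$ with values in a negative-order space — obtained by reading $\partial_t\EDM{U}$ off (\ref{project-m}), bounding the transport and Stokes terms by the energy estimate and the stochastic term by a Kolmogorov-type argument — yields compactness in the strong factor $L^2_t H$ via the Aubin--Lions--Simon lemma. By the Jakubowski--Skorokhod representation theorem (the weak topologies are metrisable on bounded sets), we obtain, on a new stochastic basis, random variables $\tilde U^\eps_{\delta,m}\to\tilde U$ in $\XX$ almost surely along a subsequence, with the original laws, together with driving noises $\tilde W_m\to\tilde W$; a L\'evy-type martingale characterisation identifies $\tilde W$ as a divergence-free space-time white noise, and both the law of $\tilde U(0)$ and its independence of $\tilde W$ are inherited from the Galerkin construction.

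Finally, one passes to the limit in the weak formulation: the linear terms converge by the weak $L^2_t V$ convergence; the nonlinearity $B_m(\tilde U^\eps_{\delta,m})\to B(\tilde U)$, tested against smooth divergence-free fields, converges thanks to the strong $L^2_t H$ convergence combined with the $L^2_t V$ bound; and the stochastic integral converges by a standard argument (of Debussche--Glatt--Holm / Gy\"ongy--Krylov type) matching convergence of integrands and integrators. Hence $\tilde U$ solves (\ref{SNS-1}) weakly. Each Galerkin solution obeys the energy \emph{equality}; letting $m\to\infty$, the kinetic energy and dissipation terms are handled by lower semicontinuity, the Itô correction by the strong convergence, and the martingale term converges in probability, producing the trajectorial energy inequality for $\tilde U$ once a version continuous into $(H,\mathrm{w})$ is selected. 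The main obstacle is exactly this last step: securing compactness strong enough to identify the quadratic nonlinearity while retaining enough control to pass the full energy balance, and then upgrading the a.e.-in-time inequality to a genuine path-by-path statement holding for all $t$ — which is where the Leray structure of the space $\XX$ is essential.
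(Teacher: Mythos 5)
Your proposal is correct and follows essentially the same route as the paper: Galerkin approximation, energy estimates uniform in $m$ (the paper uses the exponential estimates of Section \ref{sec-4}, which it needs anyway for the LDP, where you use moment bounds — the tightness mechanism via Aubin--Lions and Arzel\`a--Ascoli is the same), Skorokhod representation carrying the driving noises, identification of the limit as a weak solution, and passage from the Galerkin energy equality to the trajectorial inequality by lower semicontinuity, followed by the upgrade from a.e.\ times to all times via weak continuity in $H$. The one step you gloss — identifying the limit of the martingale terms as the stochastic integral of the limit process against the limiting noise, which the paper handles by carrying $M^m$ as an extra tightness component and characterising $M$ through its quadratic variation and covariations with the Brownian motions — is present in spirit in your sketch and does not change the approach.
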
  Before giving the large deviations results, we introduce the necessary scaling relations on the parameters, and introduce the rate function. \paragraph{\textbf{Parameter Tuning}} The large deviations analysis may be carried out either at the level of the stochastic Leray solutions $\ED{U}$ as in Definition \ref{weak-solution}, or with the Galerkin approximations $\EDM{U}$ satisfying \begin{equation}
  	\label{eq: galerkin approx} \partial_t \EDM{U}=\Delta \EDM{U}-{P}_m((\EDM{U}\cdot \nabla)\EDM{U})-\sqrt{\epsilon}{P}_m(\nabla \cdot \xi_\delta)
  \end{equation} where ${P}_m$ is as in (\ref{eq: Pm}) and $m=m(\epsilon)\to \infty$ with $\epsilon$. In either case, we must impose a scaling relation on the parameters $(\epsilon, \delta)$ or $(\epsilon, \delta(\epsilon), m(\epsilon))$. In the former case, we impose the scaling relation that $\epsilon, \delta=\delta(\epsilon)$ must satisfy
\begin{equation}\label{scale-standing}
\epsilon\delta(\epsilon)^{-\frac{5}{4\beta}}\to 0
\end{equation} and that $\beta>\frac54$, where $\beta$ is the exponent in the definition (\ref{Qdelta}) of ${Q_\delta}$. In the latter case, we impose instead, \begin{equation}\label{scale-standing-galerkin} \epsilon \min\left(m(\epsilon)^5, \delta(\epsilon)^{-\frac{5}{4\beta}}\right) \to 0. \end{equation}

Let us remark that the two hypotheses (\ref{scale-standing}, \ref{scale-standing-galerkin}) allow both regularisation schemes in which one of the parameters is removed, either by taking $m\to \infty$ and tuning $\epsilon, \delta$ according to (\ref{scale-standing}), or by taking $\delta(\epsilon)=0, m(\epsilon)\ll \epsilon^{-\frac15}$. \\
    \paragraph{\textbf{Large Deviations of Initial Data}} We next give the assumption on the large deviations of the initial data. \begin{assumption}\label{hyp: initial data} Let $\epsilon, \delta(\epsilon)$ be scaling parameters satisfying (\ref{scale-standing}). In the case of solutions $\ED{U}$ to (\ref{SNS-1}), the laws $\mu^{\epsilon,0}_\delta$ of initial data $U^{\epsilon}_{\delta(\epsilon)}(0)$ satisfy a large deviations upper bound with rate function $\mathcal{I}_0$ in the weak topology of $H$, and a lower bound with the same rate in the \emph{norm} topology of $H$: whenever $A\subset H$ is closed for the weak topology and $U\subset H$ is open in the norm topology, we have \begin{equation} \limsup_{\epsilon} \epsilon\log \mu^{\epsilon, 0}_\delta(A)\le -\inf\left\{\mathcal{I}_0(u):u\in A\right\}\end{equation} and \begin{equation} \liminf_{\epsilon} \epsilon\log \mu^{\epsilon, 0}_\delta(U)\ge -\inf\left\{\mathcal{I}_0(u):u\in U\right\}. \end{equation} Furthermore, the function $\mathcal{I}_0(u)$ is continuous in the norm topology of $H$, has a unique zero $u_0^\star \in H$, and the initial data have exponential Gaussian moments satisfying \begin{equation}\label{eq: energy estimate at gaussian ID''}
    	\limsup_{\eta\to 0} \frac{Z(\eta)}{\eta}\le \|u_0^\star\|_H^2
    \end{equation} where $Z$ is the function \begin{equation}
	\label{eq: energy estimate at gaussian ID} Z(\eta):=\limsup_{\epsilon\to 0} \epsilon\log\mathbb{E}\left[\exp\left(\frac{\eta \|U^{\epsilon}_{\delta(\epsilon)}(0)\|_H^2}{\epsilon}\right)\right]. \end{equation} In particular, we require that $Z(\eta)$ is finite for sufficiently small $\eta>0$. In the case of solutions $\EDM{U}$ to the Galerkin approximation with $m(\epsilon)\to\infty, \delta(\epsilon)\to 0$, we assume that the same holds for the laws $\mu^{\epsilon, 0}_{\delta,m}$ of $\EDM{U}(0)$ in place of $\ED{U}$. \end{assumption} Throughout, the superscript $^0$ will be used to denote objects pertaining to the initial data, to distinguish them from the law $\ED{\mu}$ of the whole process. In order to show that this condition is not overly restrictive, we give the following example of such initial distributions. The proof is elementary, and so is omitted. \begin{lemma}\label{lemma: exponential calculation ID} Let $u_0\in H$. Assumption \ref{hyp: initial data} holds for Gaussian initial data $\ED{U}(0)\sim \cG(u_0, \epsilon Q_\delta/2)$ with mean $u_0$ and covariance $Q_\delta$, under the scaling relation (\ref{scale-standing}) on $\epsilon, \delta$, with a rate function $\mathcal{I}_0(v):=\|v-u_0\|_H^2$. The same holds for $\EDM{U}(0)\sim \cG(P_mu_0, \epsilon P_m Q_\delta/2)$, provided $\epsilon, \delta, m$ satisfy (\ref{scale-standing-galerkin}).

\end{lemma}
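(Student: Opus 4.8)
The plan is to recognise the law $\mu^{\eps,0}_\delta$ as a small-noise Gaussian and verify the four ingredients of Assumption \ref{hyp: initial data} directly. Write $\ED{U}(0)=u_0+\sqrt{\eps}\,\zeta^\delta$, where $\zeta^\delta\sim\cG(0,Q_\delta/2)$ is a centred $H$-valued Gaussian, so $\mu^{\eps,0}_\delta=\mathrm{Law}(u_0+\sqrt{\eps}\,\zeta^\delta)$. From the explicit form \eqref{Qdelta} of $Q_\delta$ and the scaling \eqref{scale-standing} I use three facts, all from Section \ref{sec-2}: $Q_\delta$ is trace class with $0\le Q_\delta\le\mathrm{Id}$; $Q_\delta\to\mathrm{Id}$ strongly as $\delta\to0$, so $\langle e_k,Q_\delta e_k\rangle\to1$ for each fixed Fourier mode $e_k$; and $\eps\,\mathrm{Tr}(Q_\delta)\to0$ along the scaling. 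The rate function $\mathcal{I}_0(v)=\|v-u_0\|_H^2$ is continuous on $(H,\|\cdot\|_H)$ with unique zero $u_0^\star=u_0$, and its sublevel sets are closed balls in $H$, hence weakly compact by reflexivity, so $\mathcal{I}_0$ is a good rate function for the weak topology.

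For the upper bound, the Gaussian Laplace transform gives, uniformly in $\delta$,
\[
\E\exp\!\Big(\tfrac{1}{\eps}\langle\phi,\sqrt{\eps}\,\zeta^\delta\rangle_H\Big)=\exp\!\Big(\tfrac{1}{4\eps}\langle\phi,Q_\delta\phi\rangle_H\Big)\le\exp\!\Big(\tfrac{1}{4\eps}\|\phi\|_H^2\Big),\qquad\phi\in H.
\]
Fix a weakly compact $K\subset H$ and $\alpha<\inf_K\mathcal{I}_0$. For $v\in K$ put $\phi_v=2(v-u_0)\ne0$ and $U_v=\{w:\langle\phi_v,w-u_0\rangle_H>\alpha+\tfrac14\|\phi_v\|_H^2\}$, a weakly open half-space containing $v$; Chebyshev, the displayed bound, and the elementary inequality $(\alpha+\tfrac14 p)^2/p\ge\alpha$ give $\limsup_\eps\eps\log\mu^{\eps,0}_\delta(U_v)\le-\alpha$. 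A finite subcover of $K$ and a union bound yield $\limsup_\eps\eps\log\mu^{\eps,0}_\delta(K)\le-\inf_K\mathcal{I}_0$. Since $\E\|\sqrt{\eps}\,\zeta^\delta\|_H^2=\tfrac{\eps}{2}\mathrm{Tr}(Q_\delta)\to0$, Borell's Gaussian concentration inequality gives $\limsup_\eps\eps\log\p(\|\sqrt{\eps}\,\zeta^\delta\|_H>R)\le-R^2/C$, and as norm balls are weakly compact this is exponential tightness in $(H,\mathrm{w})$; splitting a weakly closed $A$ across $\{\|v\|_H\le R\}$ with $R$ large extends the bound to all weakly closed sets.

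For the lower bound let $U\subset H$ be norm-open and $v\in U$, and fix $\rho>0$ with $\overline{B}_{2\rho}(v)\subset U$. Choose a divergence-free trigonometric polynomial $h$ with $\|h-(v-u_0)\|_H<\rho$; then $h$ lies in the range of $Q_\delta^{1/2}$ for every $\delta$ (all Fourier symbols $\langle e_k,Q_\delta e_k\rangle$ being positive), and $B_\rho(u_0+h)\subset U$. Shifting $\mathrm{Law}(\sqrt{\eps}\,\zeta^\delta)=\cG(0,\tfrac{\eps}{2}Q_\delta)$ by the Cameron--Martin vector $h$ and combining the Cameron--Martin formula with Jensen's inequality — the relevant Paley--Wiener functional having mean zero on the symmetric ball $B_\rho(0)$ — gives
\[
\mu^{\eps,0}_\delta(U)\ \ge\ \p\big(\|\sqrt{\eps}\,\zeta^\delta-h\|_H<\rho\big)\ \ge\ \p\big(\|\sqrt{\eps}\,\zeta^\delta\|_H<\rho\big)\,\exp\!\Big(-\tfrac{1}{\eps}\|Q_\delta^{-1/2}h\|_H^2\Big).
\]
The prefactor tends to $1$ since $\E\|\sqrt{\eps}\,\zeta^\delta\|_H^2\to0$, while $\|Q_{\delta(\eps)}^{-1/2}h\|_H^2=\sum_k\langle e_k,Q_{\delta(\eps)}e_k\rangle^{-1}|\widehat{h}(k)|^2\to\|h\|_H^2$ as the sum is finite with each coefficient tending to $1$. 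Hence $\liminf_\eps\eps\log\mu^{\eps,0}_\delta(U)\ge-\|h\|_H^2\ge-(\|v-u_0\|_H+\rho)^2$; letting $\rho\downarrow0$ and optimising over $v\in U$ gives the lower bound with rate $\mathcal{I}_0$.

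Finally, expand $\|\ED{U}(0)\|_H^2=\|u_0\|_H^2+2\langle u_0,\sqrt{\eps}\,\zeta^\delta\rangle_H+\eps\|\zeta^\delta\|_H^2$ and work in the eigenbasis of $Q_\delta$: a direct computation of the resulting product of one-dimensional Gaussian and $\chi^2$ transforms, using $0\le Q_\delta\le\mathrm{Id}$ and $\langle u_0,Q_\delta u_0\rangle_H\le\|u_0\|_H^2$, gives, for $0<\eta<1$,
\[
\eps\log\E\exp\!\Big(\tfrac{\eta}{\eps}\|\ED{U}(0)\|_H^2\Big)\ \le\ \eta\|u_0\|_H^2+\tfrac{\eta^2}{1-\eta}\|u_0\|_H^2+\tfrac{\eta}{2(1-\eta)}\,\eps\,\mathrm{Tr}(Q_\delta).
\]
Taking $\limsup_\eps$ and using $\eps\,\mathrm{Tr}(Q_\delta)\to0$ yields $Z(\eta)\le\eta\|u_0\|_H^2+\tfrac{\eta^2}{1-\eta}\|u_0\|_H^2<\infty$, so $\limsup_{\eta\downarrow0}Z(\eta)/\eta\le\|u_0\|_H^2=\|u_0^\star\|_H^2$, which is \eqref{eq: energy estimate at gaussian ID''}. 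The Galerkin assertion follows verbatim with $(H,Q_\delta,u_0)$ replaced by $(P_mH,P_mQ_\delta,P_mu_0)$, using $P_mu_0\to u_0$ and $\eps\,\mathrm{Tr}(P_mQ_\delta)\le\eps\min(\mathrm{Tr}(Q_\delta),\dim P_mH)\to0$ under \eqref{scale-standing-galerkin}, and the exact identity $Q_0=\mathrm{Id}$ when $\delta=0$. The genuinely delicate point is pinning the rate down to exactly $\|v-u_0\|_H^2$: the upper bound needs only $Q_\delta\le\mathrm{Id}$, but the Cameron--Martin cost in the lower bound is the a priori larger $\|Q_\delta^{-1/2}(v-u_0)\|_H^2$, and closing this gap forces one to use $Q_{\delta(\eps)}\to\mathrm{Id}$, which is why the target is first reduced to a fixed trigonometric polynomial so that $Q_{\delta(\eps)}^{-1/2}$ acts on a finite-dimensional space; the second point needing care is that \eqref{eq: energy estimate at gaussian ID''} truly requires the quantitative estimate $\eps\,\mathrm{Tr}(Q_\delta)\to0$ to be read off from \eqref{scale-standing}.
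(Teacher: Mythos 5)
Your proof is correct, and it reaches the conclusion by a genuinely different route than the paper's Appendix \ref{appendix}. The exponential-moment estimate is essentially the paper's Step 1 (per-mode Gaussian moment generating functions in the Stokes eigenbasis, then $\epsilon\,\mathrm{Tr}(Q_\delta)\to 0$; note only that Lemma \ref{lemma: eigenvalue sum calculation} literally controls $\epsilon\,\mathrm{Tr}(AQ_\delta)$, so one should add the one-line remark that the finitely many zero modes of $A$ contribute $O(\epsilon)$ --- the paper glosses over the same point). For the upper bound, the paper tilts by the functionals $\Lambda_0(\psi,v,u_0)=\langle v-u_0,\psi\rangle-\tfrac14\|\psi\|_H^2$ and exchanges $\inf_F\sup_\psi$ via the Kipnis--Landim minimax lemma on weakly compact sets, with exponential tightness deduced from Step 1 and Banach--Alaoglu; you instead prove a pointwise bound by covering the compact set with tilted weakly open half-spaces and a direct Chebyshev estimate, using Borell--TIS for tightness --- more elementary, avoiding the minimax lemma at the cost of the covering and union-bound step. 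For the lower bound the difference is more substantive: the paper uses the entropy method (Lemma \ref{entropymethod}), tilting to $\cG\big(u_0+Q_\delta^{1/2}(v-u_0),\epsilon Q_\delta/2\big)$ so that the relative entropy is exactly $\epsilon^{-1}\|v-u_0\|_H^2$ even when $v-u_0$ lies outside the Cameron--Martin space, and then verifying concentration of the tilted measures at $v$; you instead invoke the classical Cameron--Martin shift inequality $\PP(\|X-h\|_H<\rho)\ge e^{-\frac12\|h\|_{\mathrm{CM}}^2}\,\PP(\|X\|_H<\rho)$ for a fixed divergence-free trigonometric polynomial $h$ approximating $v-u_0$, so that $\|Q_{\delta(\epsilon)}^{-1/2}h\|_H^2\to\|h\|_H^2$ mode by mode. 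Both devices resolve the same mismatch between the $\delta$-dependent Cameron--Martin cost and the target rate $\|v-u_0\|_H^2$: the paper's choice of the shifted mean avoids approximating $v$ at all, while your finite-mode reduction is self-contained and makes the mechanism ($Q_\delta\to I$ on finitely many modes) explicit, and you correctly flag this as the delicate point. Your treatment of the Galerkin case, including $\delta=0$ and the replacement $u_0\mapsto P_m u_0$, is consistent with the paper's.
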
 The particular choice of $\EDM{U}(0)\sim \cG(0, \epsilon P_m Q_\delta/2)$ will be relevant for several remarks later, as it is a stationary distribution under which the law of $\EDM{U}$ is invariant under the time-reversal operation  \begin{equation}
	\label{eq: time reversal EDU} (\mathfrak{T}_Tu)(t,x):=-u(T-t,x),
\end{equation} see Lemma \ref{lemma: properties of galerkin}. \paragraph{\textbf{The Rate Function}} We now specify the rate function of the large deviations, related to the one reported in \cite{QY}. We define the dynamic cost $\mathcal{J}(u)$ to be \begin{equation}\label{dynamic} \mathcal{J}(u):=\frac12 \inf\left\{\|g\|_{L^2([0,T],\mathcal{M})}^2: u\text{ satisfies (\ref{control}) for }g\right\}\end{equation}
 where we understand the solution theory for (\ref{control}) in a weak sense. Let us remark that this is the dynamic rate function $\mathcal{I}_1$ in \cite{QY}. We set $\mathcal{I}$ to be the whole rate function\begin{equation}\label{rate-3-1}
\mathcal{I}(u)=\mathcal{I}_0(u(0)) + \mathcal{J}(u)
\end{equation} where $\mathcal{I}_0$ is the large deviation rate function of the initial data in Assumption \ref{hyp: initial data}. We next characterise the spaces on which we can restrict in the lower bound. \begin{definition}\label{def: wk stron uniqueness class} We say that $\mathcal{C}\subset \XX$ satisfies the weak-strong uniqueness property if, whenever $u\in \mathcal{C}, v\in \XX$ are weak solutions to the skeleton equation (\ref{control}) for the same $g$ and the same initial data $u(0)=v(0)$, and $v$ satisfies the energy inequality \begin{equation}\label{energy-5}
\frac{1}{2}\|v(t)\|_{H}^2+\int_0^t\| v(s)\|_{V}^2ds\leq\frac{1}{2}\|v_0\|_{H}^2+\int_0^t\langle\nabla v,g\rangle ds
\end{equation} then $u=v$. The $\mathcal{I}$-closure of a set $\mathcal{A}\subset \XX$ is defined by \begin{equation}
	\label{eq: def closure} \overline{\mathcal{A}}^{\mathcal{I}}:=\left\{u\in \XX: \exists u^{(n)}\in \mathcal{A}, \quad u^{(n)}\to u,\qquad  \mathcal{I}(u^{(n)})\to \mathcal{I}(u) \right\}
\end{equation} so that $\overline{\mathcal{A}}^{\mathcal{I}}$ is the maximal set on which $\mathcal{I}$ agrees with the lower semicontinuous envelope of its restriction to $\mathcal{A}$.\end{definition} Concretely, we show in Lemma \ref{weak-strong} that the Ladyzhenskaya-Prodi-Serrin condition \cite{RRS,WE} defines a weak-strong uniqueness class, and in Lemma \ref{l4 recovery} that its $\mathcal{I}$-closure contains $L^4([0,T]\times\mathbb{T}^3, \mathbb{R}^3)$.  We can now state the large deviations principle.
\begin{theorem}(Propositions \ref{proposition: final lower bound}, \ref{upresult-1})\label{thrm: LDP}
Let $\epsilon, \delta(\epsilon)$ be scaling parameters satisfying (\ref{scale-standing}), and for all $\epsilon>0$, let $\ED{\mu}$ be the law of a stochastic Leray solution $\ED{U}$ to (\ref{SNS-1}) started from initial data $\ED{U}(0)$ satisfying Assumption \ref{hyp: initial data}, and let $\mathcal{C}$ be the $\mathcal{I}$-closure of a class $\mathcal{C}_0$ satisfying the weak-strong uniqueness property. Then $\ED{\mu}$ satisfy the restricted large deviation lower bound, for every open subset $\mathcal{U}$ of $\XX$,
\begin{equation}\label{LB}
\liminf_{\epsilon\rightarrow0}\epsilon \log \ED{\mu}(\mathcal{U})\geq-\inf_{v\in \mathcal{U}\cap \mathcal{C}}\mathcal{I}(v),
\end{equation}
and the upper bound, for every closed subset $\mathcal{V}$ of $\XX$,
\begin{equation}\label{UB}
\limsup_{\epsilon\rightarrow0}\epsilon \log \ED{\mu}(\mathcal{V})\leq-\inf_{v\in \mathcal{V}}\mathcal{I}(v),
\end{equation}
where the rate function $\mathcal{I}$ is defined by (\ref{rate-3-1}). Moreover, $\mathcal{C}$ can concretely be chosen to $L^4([0,T]\times\mathbb{T}^3,\mathbb{R}^3)$ in (\ref{LB}). Finally, the same holds for $\EDM{U}$, if $\epsilon, \delta=\delta(\epsilon)\to 0, m=m(\epsilon)\to \infty$ are chosen according to (\ref{scale-standing-galerkin}).
\end{theorem}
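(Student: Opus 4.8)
The plan is to prove the upper and lower bounds separately, treating them as genuinely different statements — the upper bound holds on all of $\XX$, while the lower bound is restricted to the weak-strong uniqueness class $\mathcal{C}$. For the \emph{upper bound}, I would use a weak convergence / variational (Budhiraja--Dupuis) approach adapted to the trajectorial energy inequality. The starting point is that the noise $\sqrt{\epsilon}\nabla\cdot\xi^\delta$, after the regularisation, gains $\beta>\tfrac54$ derivatives, so the scaling hypothesis (\ref{scale-standing}) forces the rescaled driving noise to be negligible in the relevant negative-order space; this is exactly where the parameter tuning enters. Concretely, I would establish exponential tightness of $\ED{\mu}$ in $\XX$ using the path-by-path energy estimate of Proposition \ref{existence} together with the Gaussian moment bound (\ref{eq: energy estimate at gaussian ID}) from Assumption \ref{hyp: initial data} — this is what makes $\XX$ (the space in which the energy inequality yields compactness) the correct ambient space. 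Then, given exponential tightness, it suffices to prove the upper bound on compact sets, and I would do this by a Laplace-principle argument: take any weak limit point of the controlled processes, show using lower semicontinuity of $\mathcal{I}$ and the fact that limit points solve the skeleton equation (\ref{control}) while satisfying the energy inequality (\ref{energy-5}), that the cost is at least $\mathcal{I}$ of the limit. The Leray-type energy inequality survives passing to the limit because it is convex in the appropriate variables.

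For the \emph{lower bound}, the mechanism is different and this is where the restriction to $\mathcal{C}$ is unavoidable. Fix $v\in \mathcal{U}\cap\mathcal{C}$ with $\mathcal{I}(v)<\infty$; by Definition \ref{def: wk stron uniqueness class} and the definition of $\mathcal{I}$-closure, it is enough to treat $v\in\mathcal{C}_0$ (a genuine weak-strong uniqueness point) and then pass along the recovery sequence defining $\overline{\mathcal{C}_0}^{\mathcal{I}}$, using that $\mathcal{I}(v^{(n)})\to\mathcal{I}(v)$ and $v^{(n)}\to v$ to conclude. For $v\in\mathcal{C}_0$, pick a control $g\in L^2([0,T],\mathcal{M})$ nearly achieving the infimum in (\ref{dynamic}). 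The idea is a Girsanov / change-of-measure argument: under a tilted measure the solution $\ED{U}$ is driven so as to follow the skeleton dynamics with control $g$ plus the small residual noise. One shows the tilted solutions converge (in probability, in $\XX$) to the unique solution $v$ of (\ref{control}) — here weak-strong uniqueness in $\mathcal{C}_0$ is precisely what upgrades weak subsequential limits to genuine convergence to $v$, since a priori the stochastic Leray solutions only satisfy an energy \emph{inequality} and need not be unique. The relative-entropy cost of the tilt is $\tfrac{1}{2\epsilon}\|g\|_{L^2([0,T],\mathcal{M})}^2$ up to lower-order terms controlled by (\ref{scale-standing}), plus the initial cost $\mathcal{I}_0(v(0))$, for which I would invoke the norm-topology lower bound and the continuity of $\mathcal{I}_0$ in Assumption \ref{hyp: initial data} to paste the initial data. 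Combining the entropy estimate with the convergence $\ED{U}\to v$ via the standard lower-bound lemma (e.g.\ a Varadhan-type argument) yields (\ref{LB}). The concrete choice $\mathcal{C}=L^4([0,T]\times\mathbb{T}^3,\mathbb{R}^3)$ then follows from Lemma \ref{l4 recovery}, which says $L^4$ lies in the $\mathcal{I}$-closure of the Ladyzhenskaya-Prodi-Serrin class (shown in Lemma \ref{weak-strong} to have the weak-strong uniqueness property).

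The \textbf{main obstacle} is the lower bound, specifically the passage from weak subsequential compactness of the tilted solutions to genuine convergence to the skeleton solution $v$. Because the stochastic Leray solutions carry only a one-sided energy inequality and are not known to be unique, one cannot identify the limit by a soft argument; the weak-strong uniqueness of $\mathcal{C}_0$ must be deployed carefully, checking that the weak limit of the tilted solutions is a weak solution to (\ref{control}) for the \emph{correct} $g$ and that it inherits the energy inequality (\ref{energy-5}) with the control term on the right-hand side — only then does Definition \ref{def: wk stron uniqueness class} force the limit to equal $v\in\mathcal{C}_0$. A secondary technical point is that the control $g$ lives in $L^2([0,T],\mathcal{M})$ rather than an arbitrary $L^2$ matrix field, so the tilt must be built compatibly with the covariance structure of $\xi^\delta$ (and, in the Galerkin case, with the projection $P_m$); keeping the entropy cost equal to $\tfrac12\|g\|^2_{L^2([0,T],\mathcal{M})}$ and not something larger requires that the noise covariance be exactly the one implicit in the norm on $\mathcal{M}$. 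The Galerkin case under (\ref{scale-standing-galerkin}) is handled by the same arguments with $P_m$ inserted throughout; the extra term $m(\epsilon)^5$ in the scaling controls the high-frequency energy of the finite-dimensional noise, and the time-reversal symmetry $\mathfrak{T}_T$ of (\ref{eq: time reversal EDU}) can be used as a consistency check on the rate function but is not needed for the proof itself.
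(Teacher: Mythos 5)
Your lower bound is essentially the paper's argument: the entropy method of Lemma \ref{entropymethod}, a Cameron--Martin/Girsanov tilt along $\sqrt{Q_\delta}g$ with entropy cost $\tfrac12\|g\|^2_{L^2([0,T],\mathcal{M})}+\mathcal{I}_0(v(0))$ (Lemma \ref{lemma: existence of tilted}), identification of the tilted limits through tightness, the energy inequality with the control term, and weak-strong uniqueness on $\mathcal{C}_0$ (Lemma \ref{stationary}), followed by the recovery-sequence step for the $\mathcal{I}$-closure and the concrete choice $\mathcal{C}=L^4$ via Lemmata \ref{weak-strong} and \ref{l4 recovery}. That part of the proposal is sound and matches Section \ref{sec-6}.

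The genuine gap is in your upper bound. You propose a Budhiraja--Dupuis weak-convergence/Laplace-principle argument with controlled processes, but that machinery rests on the variational representation of exponential functionals of the driving noise, which requires the solution to be a measurable map of $W$ (i.e.\ a probabilistically strong solution, or at least uniqueness in law strong enough to produce such a map). Stochastic Leray solutions are constructed only as probabilistically weak solutions satisfying a trajectorial energy \emph{inequality}, and no pathwise or in-law uniqueness is available -- this is precisely why the paper states that the weak convergence approach cannot be applied and argues ``by hand''. Your step ``take any weak limit point of the controlled processes and show the cost is at least $\mathcal{I}$ of the limit'' therefore has no starting point: there is no representation of $\ED{U}$ against shifted noises to which one could attach controls. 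The paper's actual upper bound (Proposition \ref{upresult-1}) avoids this entirely: using the dual form of the rate function from Lemma \ref{ske-lem}, it builds, for each divergence-free test function $\varphi$ and each $\psi\in C_b(H,\mathrm{w})$, the exponential supermartingale $Q^\epsilon_{\varphi,\psi}(\cdot,\ED{U})$ directly from the weak formulation (\ref{weaksense}) -- which is available for any stochastic Leray solution -- then applies Chebyshev, Varadhan's lemma for the initial data, exponential tightness from the trajectorial energy inequality (Section \ref{sec-4}), and a finite covering of compact sets with an optimisation over $(\varphi,\psi)$. If you want to keep your outline, you must replace the controlled-process step by this test-function/supermartingale argument (or otherwise prove a measurable solution map, which is out of reach here). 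A further small inaccuracy: in a weak-convergence upper bound one would not need the limit points to satisfy the energy inequality (\ref{energy-5}) at all; that inequality is needed in the \emph{lower} bound to invoke weak-strong uniqueness, and in the sharpened upper bound of Theorem \ref{thm: improved ub}, not in (\ref{UB}).
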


This achieves the goal, discussed in the introduction, of showing that the stochastic Leray solutions of (\ref{SNS-1}) have the same large deviations as the particle model in \cite{QY} on a large subset of $\XX$. We will present several mild sharpenings of the upper bound, related to the energy inequality (\ref{energy-5}). The first result in this direction which we will present is a mild sharpening of the upper bound. The rate function $\mathcal{I}$ appearing in (\ref{UB}) vanishes on all \emph{weak} solutions $u\in \mathbb{X}$ to the Navier-Stokes equations starting at $u_0^\star$, while the Leray theory also has the energy inequality (\ref{eq: EI}) in the special case $g=0$. We will now show a strengthening with a new rate function which vanishes only on Leray solutions to Navier-Stokes starting at $u_0^\star$. \begin{theorem}\label{thm: improved ub}For any $u\in \mathbb{X}$, set $\mathcal{E}^\star(u)$ to be the maximal violation of the energy inequality (\ref{eq: EI}) for $u(0)=u_0^\star$: \begin{equation}\label{eq: Estar}
	\mathcal{E}^\star(u):=\sup_{t\le T}\left(\frac12\|u(t)\|_H^2+\int_0^t \|u(s)\|_V^2-\frac12 \|u_0^\star\|_H^2\right)^+
\end{equation} with $x^+$ denoting the nonnegative part of $x\in \mathbb{R}$. Then, under the same hypotheses as Theorem \ref{thrm: LDP}, there exists a function $S:[0,\infty)\to [0,\infty)$, which vanishes only at $S(0)=0$, such that the upper bound (\ref{UB}) holds with the new rate function \begin{equation}
	\mathcal{I}'(u):=\max\left(\mathcal{I}(u), S(\mathcal{E}^\star(u))\right)
\end{equation} which vanishes only on \emph{Leray} solutions to the Navier-Stokes equations starting at $u_0^\star$. The function $S$ may be made explicit in terms of the function $Z(\eta)$ defined in (\ref{eq: energy estimate at gaussian ID}). \end{theorem}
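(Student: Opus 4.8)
The plan is to obtain Theorem \ref{thm: improved ub} from the upper bound of Theorem \ref{thrm: LDP} by supplying one extra input: an exponential estimate on the size of the energy violation. First I would record that $\mathcal{E}^\star$ is lower semicontinuous on $\XX$, since for each fixed $t$ the maps $u\mapsto\tfrac12\|u(t)\|_H^2$ and $u\mapsto\int_0^t\|u(s)\|_V^2\,ds$ are lower semicontinuous on $C([0,T],(H,\mathrm{w}))$ and on $L^2_\mathrm{w}([0,T],H\cap V)$ respectively, and $\mathcal{E}^\star$ is a supremum over $t\le T$ of positive parts of sums of such maps; in particular $\{v\in\XX:\mathcal{E}^\star(v)>c\}$ is open for every $c$. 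Hence, if one proves
\begin{equation}\label{eq: Estar tail}
\limsup_{\eps\to0}\eps\log\p\big(\mathcal{E}^\star(\ED U)>a\big)\le-S(a)\qquad\text{for every }a\ge0
\end{equation}
for a lower semicontinuous $S$, then for any $u\in\XX$ and $\eta>0$ one may intersect $\{v:\mathcal{E}^\star(v)>\mathcal{E}^\star(u)-\eta\}$ with the neighbourhood furnished by the local upper bound established in the proof of Theorem \ref{thrm: LDP}, use $\p(A\cap B)\le\min(\p(A),\p(B))$, and let $\eta\downarrow0$ to obtain the local upper bound with rate $\mathcal{I}'=\max(\mathcal{I},S\circ\mathcal{E}^\star)$; together with the exponential tightness in $\XX$ already established for Theorem \ref{thrm: LDP}, the standard finite subcover argument promotes this to the bound (\ref{UB}) with $\mathcal{I}'$ on every closed set.

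The crux is therefore (\ref{eq: Estar tail}). Here I would invoke the trajectorial energy inequality of Proposition \ref{existence}. Since it holds at $t=0$ and for a.e.\ $t$, and $t\mapsto\|\ED U(t)\|_H$ is a.s.\ lower semicontinuous (being weakly continuous), comparing with the reference energy $\tfrac12\|u_0^\star\|_H^2$ yields, a.s.,
\begin{equation}\label{eq: Estar split}
\mathcal{E}^\star(\ED U)\ \le\ \tfrac12\big(\|\ED U(0)\|_H^2-\|u_0^\star\|_H^2\big)^+\ +\ \mathcal{R}^\eps_\delta,
\end{equation}
where $\mathcal{R}^\eps_\delta\ge0$ is the positive part of $\sup_{t\le T}$ of the noise contribution to the energy balance: the continuous martingale arising from the It\^o correction of $\|\ED U\|_H^2$ against the noise, possibly together with a deterministic correction which is negligible under (\ref{scale-standing}) (this negligibility is part of what is already needed for Theorem \ref{thrm: LDP}). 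The quadratic variation of that martingale is bounded by $C\eps\int_0^t\|\ED U(s)\|_V^2\,ds$, because the spatial regularisation has Fourier symbol $\le1$; exploiting this together with exponential moments of $\int_0^T\|\ED U(s)\|_V^2\,ds$ — which follow from the energy inequality itself via an absorption argument and from the exponential moments (\ref{eq: energy estimate at gaussian ID}) of $\|\ED U(0)\|_H^2$ — an exponential supermartingale estimate gives $\limsup_\eps\eps\log\p(\mathcal{R}^\eps_\delta>\eta)\le-\gamma(\eta)$ for an explicit, strictly positive $\gamma$ built from $Z$. Applying a union bound to (\ref{eq: Estar split}), using the Chernoff bound via (\ref{eq: energy estimate at gaussian ID}) for the first term and the estimate on $\mathcal{R}^\eps_\delta$ for the second, and optimising the split point, produces (\ref{eq: Estar tail}) with
\[
S(a)=\sup_{0<\eta<a}\min\Big(\sup_{\lambda>0}\big(\lambda(\|u_0^\star\|_H^2+2(a-\eta))-Z(\lambda)\big),\ \gamma(\eta)\Big),
\]
which is explicit in terms of $Z$.

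Next I would check the two properties of $S$. For $a>0$, take $\eta=a/2$: $\gamma(a/2)>0$, and the hypothesis (\ref{eq: energy estimate at gaussian ID''}), i.e.\ $\limsup_{\lambda\to0}Z(\lambda)/\lambda\le\|u_0^\star\|_H^2$, makes $\lambda\mapsto\lambda(\|u_0^\star\|_H^2+a)-Z(\lambda)$ strictly positive for all small $\lambda>0$; hence $S(a)>0$. That $S(0)=0$ reduces to $Z(\lambda)\ge\lambda\|u_0^\star\|_H^2$ for all $\lambda\ge0$, which I would obtain from Jensen's inequality once $\liminf_\eps\E\|\ED U(0)\|_H^2\ge\|u_0^\star\|_H^2$ is known; the latter follows from the large deviations lower bound for the initial data (which forces $\ED U(0)\to u_0^\star$ in probability, $u_0^\star$ being the unique zero of $\mathcal{I}_0$) together with the uniform integrability of $\|\ED U(0)\|_H^2$ supplied by the finite exponential moments in Assumption \ref{hyp: initial data}.

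Finally, assembling these gives (\ref{UB}) with $\mathcal{I}'$, whose zero set is $\{u:\mathcal{I}(u)=0\}\cap\{u:\mathcal{E}^\star(u)=0\}$: the first condition forces $u(0)=u_0^\star$ and $\partial_t u+\mathbf{P}(u\cdot\nabla)u+Au=0$, i.e.\ $u$ is a weak Navier-Stokes solution from $u_0^\star$, and the second is exactly the energy inequality (\ref{eq: EI}), so $\mathcal{I}'$ vanishes precisely on the Leray solutions from $u_0^\star$. The Galerkin case under (\ref{scale-standing-galerkin}) is the same and marginally simpler, since there the energy identity replaces the energy inequality and the deterministic correction is $O(\eps m^5 T)\to0$. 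I expect the main obstacle to be the rigorous control of $\mathcal{R}^\eps_\delta$ at exponential scale $\eps$ under the degenerate scaling; but precisely these exponential estimates already underlie the proof of Theorem \ref{thrm: LDP}, so the work lies in organising them rather than in a new idea.
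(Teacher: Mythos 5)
Your proposal is correct and follows essentially the same route as the paper: the paper's Lemma \ref{lemma: improved UB'} also starts from the trajectorial energy inequality, splits the violation event into large initial energy, large dissipation norm, and a martingale excursion controlled by its quadratic variation, estimates each via $Z(\eta)$-Chernoff bounds and an exponential-martingale inequality, and then merges the resulting bound with the variational $\mathcal{I}$-upper bound on open sets by taking the worse exponent and covering compact sets (Proposition \ref{upresult-1}). The only cosmetic differences are that you phrase the estimate as a global tail bound on the lower semicontinuous functional $\mathcal{E}^\star$ and localise afterwards, whereas the paper builds a per-trajectory open set at a near-violating time, and that your Jensen argument for $S(0)=0$ is unnecessary since, as in the paper, one simply sets $S(0)=0$ (the tail bound at $a=0$ being vacuous).
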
   We next give the theorem relating the classes $\mathcal{C}$ appearing in the lower bound of Theorem \ref{thrm: LDP} to the energy equality (\ref{eq: EI}).  \begin{theorem}
\label{thrm: Energy Equality}  Let $\mathcal{C}\subset \XX$ be as in the lower bound of Theorem \ref{thrm: LDP}, and $\mathcal{R}\subset \mathcal{C}$ be such that $\mathcal{R}=\mathfrak{T}_T \mathcal{R}$, where the time-reversal operator $\mathfrak{T}_T$ is defined in (\ref{eq: time reversal EDU}). Then, whenever $u\in \mathcal{R}$ has finite rate $\mathcal{I}(u)<\infty$, the energy equality holds: \begin{equation}
	\label{eq: general energy equality} \frac12 \|u(T)\|_H^2+\int_0^T \|u(s)\|_V^2 = \frac12 \|u(0)\|_H^2+\int_0^T\langle \nabla u, g\rangle ds,
\end{equation} where $g\in L^2([0,T],\mathcal{M})$ is such that $u$ solves (\ref{control}).   \end{theorem}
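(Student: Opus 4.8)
The plan is to exploit the time-reversal symmetry of the class of skeleton equations together with the restricted large deviations bounds of Theorem \ref{thrm: LDP}. First I would observe that if $u\in\mathcal{R}$ solves (\ref{control}) with control $g$, then its time-reversal $\tilde u:=\mathfrak{T}_Tu$ also solves a skeleton equation: substituting $u(t,x)=-\tilde u(T-t,x)$ into (\ref{control}) and using that $\mathbf{P}(u\cdot\nabla)u$ is quadratic while $\partial_t u$ and $Au$ pick up a single sign, one finds $\partial_t\tilde u = -A\tilde u-\mathbf{P}(\tilde u\cdot\nabla)\tilde u-\nabla\cdot\tilde g$ for a suitable $\tilde g\in L^2([0,T],\mathcal{M})$ (explicitly $\tilde g(t,x)$ is built from $g(T-t,x)$ plus a term absorbing the sign change in the Stokes part, which is again a gradient of an $\mathcal M$-valued field since $2Au = -2\mathbf P\Delta u = -\nabla\cdot(2\nabla u)$ modulo pressure). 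Since $\mathcal{R}=\mathfrak{T}_T\mathcal{R}$ and $\mathcal{R}\subset\mathcal{C}$, both $u$ and $\tilde u$ lie in the weak-strong uniqueness class $\mathcal{C}$, so each is the \emph{unique} solution of its skeleton equation among trajectories satisfying the energy inequality (\ref{energy-5}).

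The key analytic input is that any $u$ with $\mathcal{I}(u)<\infty$ automatically satisfies the energy \emph{inequality} (\ref{energy-5}): this is exactly the content of the stochastic Leray theory (Proposition \ref{prop: main existence}) combined with the matching of upper and lower bounds on $\mathcal{C}$ — a trajectory in $\mathcal{C}$ with finite rate is, by the lower bound (\ref{LB}), approximable in the sense that it must be realisable as a limit along which the energy inequality is preserved, hence it inherits (\ref{energy-5}). Granting this, I would apply it to \emph{both} $u$ and $\tilde u$, obtaining two inequalities:
\begin{equation}
\tfrac12\|u(T)\|_H^2+\int_0^T\|u(s)\|_V^2\,ds \le \tfrac12\|u(0)\|_H^2+\int_0^T\langle\nabla u,g\rangle\,ds,
\end{equation}
and the analogous inequality for $\tilde u$ with $\tilde g$. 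The second inequality, rewritten back in terms of $u$ via the change of variables $s\mapsto T-s$, yields the \emph{reverse} inequality
\begin{equation}
\tfrac12\|u(0)\|_H^2 \le \tfrac12\|u(T)\|_H^2+\int_0^T\|u(s)\|_V^2\,ds - \int_0^T\langle\nabla u,g\rangle\,ds,
\end{equation}
where the cross term and the viscous term transform with precisely the signs needed; the bookkeeping here is where I would need to be careful to track how $\langle\nabla u,g\rangle$ and $\|u\|_V^2$ behave under $\mathfrak T_T$ and the reindexing of $\tilde g$. Adding the two inequalities forces equality in both, which is exactly (\ref{eq: general energy equality}). The final sentence of the theorem — that a Navier-Stokes weak solution in $\mathcal{R}$ satisfies (\ref{eq: energy equality}) — then follows immediately by taking $g=0$, for which the skeleton equation is the Navier-Stokes equation itself and $\langle\nabla u,g\rangle=0$.

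The main obstacle I anticipate is the rigorous justification that finite rate implies the energy inequality (\ref{energy-5}) with the control $g$ achieving the infimum in (\ref{dynamic}), and in particular identifying the correct $\tilde g$ and verifying $\tilde g\in L^2([0,T],\mathcal M)$ — the space $\mathcal M$ is a proper subspace of $L^2(\mathbb T^3,\mathbb R^{3\times3})$ and one must check the symmetry/trace constraints defining it are stable under the time-reversal manipulation, including the extra term coming from rewriting $2Au$ as a divergence. A secondary subtlety is that the energy inequality for $u$ a priori comes from the approximating stochastic solutions and need not hold for \emph{every} trajectory of finite rate unless $u\in\mathcal C$; this is why the hypothesis $\mathcal R\subset\mathcal C$ is essential, and I would make sure the argument uses it through the weak-strong uniqueness property rather than just through the lower bound. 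Once these two points are settled, the remainder is the elementary "two opposite inequalities give equality" argument sketched above.
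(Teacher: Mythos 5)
Your time-reversal skeleton is exactly the paper's: $\mathfrak{T}_Tu$ solves (\ref{control}) with the reversed control $g_{\mathrm{r}}(t,x)=g(T-t,x)+2\nabla(\mathfrak{T}_Tu)(t,x)\in L^2([0,T],\mathcal{M})$, applying the generalised energy inequality to both $u$ and $\mathfrak{T}_Tu$ and undoing the change of variables $s\mapsto T-s$ produces two opposite inequalities, and hence (\ref{eq: general energy equality}); your sign bookkeeping is correct, and the Navier-Stokes case follows by taking $g=0$. So the second half of your argument is sound and coincides with the paper's proof.

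The genuine gap is in your key analytic input, namely that every $u\in\mathcal{C}$ with $\mathcal{I}(u)<\infty$ satisfies the energy inequality (\ref{energy-5}) with its (optimal) control. You justify this either by appealing to ``the matching of upper and lower bounds'' (which gives no pathwise information about a fixed trajectory $u$) or by asserting that $u$ and $\mathfrak{T}_Tu$ ``lie in the weak-strong uniqueness class $\mathcal{C}$'' --- but $\mathcal{C}$ is \emph{not} a weak-strong uniqueness class: only $\mathcal{C}_0$ is, and $\mathcal{C}=\overline{\mathcal{C}_0}^{\mathcal{I}}$ (concretely containing $L^4([0,T]\times\mathbb{T}^3,\mathbb{R}^3)$, which does not satisfy the Ladyzhenskaya-Prodi-Serrin condition). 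The paper closes this step in Lemma \ref{lemma: GEI on cR}, in two stages. First, for $u\in\mathcal{C}_0$ one needs the \emph{existence} of a solution of the skeleton equation with the same data and control satisfying (\ref{energy-5}); this comes from the vanishing-noise limit of the controlled stochastic Leray solutions of (\ref{scontrol-0}) (Lemma \ref{lemma: existence of tilted}, Lemma \ref{stationary}, Remark \ref{rmk: existence skeleton}), and only then does weak-strong uniqueness in $\mathcal{C}_0$ identify that solution with $u$. Second, for $u\in\mathcal{C}\setminus\mathcal{C}_0$ one approximates by $u^{(n)}\in\mathcal{C}_0$ with $u^{(n)}\to u$ in $\XX$ and $\mathcal{I}(u^{(n)})\to\mathcal{I}(u)$, uses that the optimal controls converge strongly in $L^2([0,T],\mathcal{M})$ (Lemma \ref{lemma: convergence of g}, which needs convergence of the rate and uniqueness of the mean-zero optimal control), and passes to the limit in the inequality. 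Your sketch contains neither the construction of an energy-inequality solution to the \emph{forced} skeleton equation (Proposition \ref{prop: main existence} alone concerns the unforced stochastic equation) nor the $\mathcal{I}$-closure approximation; without these, the step ``finite rate in $\mathcal{C}$ implies (\ref{energy-5})'', which you then invoke for both $u$ and $\mathfrak{T}_Tu$, is unproven, and applying it directly to elements of $\mathcal{C}$ via uniqueness, as you propose, would fail.
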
 As discussed in the introduction, this theorem partially resolves the problem of connecting uniqueness criteria to the energy equality \cite{berselli2020energy}. 

We also find, via a direct and probabilistic argument the following connection of the PDE problem to the sharpness of the large deviation principle. \begin{remark}\label{rmk: violations of EI} Set $\delta(\epsilon)=0$ and choose $m(\epsilon)$ according to (\ref{scale-standing-galerkin}). Let $\EDM{U}$ be solutions to (\ref{eq: galerkin approx}) starting from equilibrium $\EDM{U}(0)\sim \cG(0, \epsilon P_m Q_\delta/2)$. Then violations of the energy identity are related to the LDP of $U^\epsilon_{0,m}$ as follows:
  	\begin{enumerate}[label=\roman*)] \item \emph{(Non-Leray Solutions Violate Lower Bound)} Suppose $u\in \mathbb{X}$ is a solution to (\ref{control}) for some $g\in L^2([0,T],\mathcal{M})$, which violates the energy inequality at the terminal time $T$: \begin{equation}
  		\frac12 \|u(T)\|_H^2 + \int_0^T \|u(s)\|_V^2 > \frac12 \|u(0)\|_H^2+\int_0^T \langle \nabla u, g\rangle ds.
  	\end{equation} Then $u$ violates the large deviations lower bound: \begin{equation}\label{eq: failure of LB}
  		\inf_{\mathcal{U}\ni u} \liminf_{\epsilon\to 0} \epsilon \log \mu^\epsilon_{0, m}(\mathcal{U}) < - \mathcal{I}(u)
  	\end{equation} where the infimum runs over all open sets $\mathcal{U}\subset \XX$ containing $u$. \item \emph{(Anomalous dissipation violates lower bound for reversed trajectory)} Suppose instead that  $u\in \XX$ is a solution to the controlled equation (\ref{control}) for which the equality (\ref{eq: general energy equality}) is a strict inequality. Then the lower bound fails, in the sense of (\ref{eq: failure of LB}), for the trajectory $v:=\mathfrak{T}_Tu$. \end{enumerate}
  \end{remark} \dhedit{Let us remark on the difference between the two different situations in this result, and in Theorem \ref{thm: improved ub}. Remark \ref{rmk: violations of EI} gives (implicitly) an improved upper bound on non-Leray solutions to (\ref{control}), provided that the energy inequality remains violated at the terminal time $T$, and for the special case $\delta=0, m=m(\epsilon)\ll \epsilon^{-\frac15}$ and with initial data $\EOM{U}(0)$ started in equilibrium. In contrast, the upper bound in Theorem \ref{thm: improved ub} applies to more general scalings on $\epsilon, \delta, m$ and more general initial conditions, and penalizes violations of the energy inequality at any time $0\le t\le T$, but only in comparison to the unforced energy inequality.} \medskip \\  Finally, we give our result on the large deviations of the strong solution. The definition of strong solution is given in Definition \ref{strong-solution} and the definition of the strong local path space $\XX_{\rm st, loc}$ and its topology are given in Section \ref{subsec: XXstloc}.  \begin{theorem}[Triviality of large deviations of the strong solution]\label{thm: LDPstrong}Let $\ED{\overline{U}}(0)$ be a $V$-valued random variable, satisfying a large deviation principle with speed $\epsilon^{-1}$ in the weak topology of $H$, with a good rate function $\mathcal{I}_0$ vanishing at a unique $u^\star_0$. For a divergence-free white noise $W$, let $(\ED{\overline{U}}(t), 0\le t<\tau^\epsilon)$ be the maximal strong solution to (\ref{SNS-1}) starting at this initial data. Let $\delta=\delta(\epsilon)$ be chosen to satisfy (\ref{scale-standing}) and let $r_\epsilon=r(\epsilon, \delta(\epsilon))$ be any putative large deviations speed, which diverges as $\epsilon\to 0$. For large deviations in $\XX_{\rm st, loc}$, we have the following dichotomy. \begin{enumerate}[label=\roman*).]
	\item If $\inf_{\epsilon} \epsilon r_\epsilon>0$, then the large deviations at speed $r_\epsilon$ include spontaneous blowup with zero large-deviations cost: for any partial path $\overline{u}$ and any initial segment $\overline{v}$ of $\overline{u}$, it holds that \begin{equation}
	\label{eq: cheap blowup}	\inf_{\overline{\mathcal{U}}\ni \overline{v}}\liminf_{\epsilon\to 0} r_\epsilon^{-1} \log \mathbb{P}\left(\ED{\overline{U}}\in \overline{\mathcal{U}}\right) \ge \inf_{\overline{\mathcal{U}}\ni \overline{u}}\liminf_{\epsilon\to 0} r_\epsilon^{-1} \log \mathbb{P}\left(\ED{\overline{U}}\in \overline{\mathcal{U}}\right)
	\end{equation}where, on both sides, the infimum runs over open sets $\overline{\mathcal{U}}$ containing $\overline{v}$, respectively $\overline{u}$. For the trivial path $\star$, we observe instantaneous blowup with zero large deviations cost:\begin{equation}\label{eq: instant blowup} \inf_{\overline{\mathcal{U}} \ni \star} r_\epsilon^{-1} \liminf_{\epsilon\to 0}\log \PP\left(\ED{\overline{U}}\in \overline{\mathcal{U}}\right) = 0.
		\end{equation} \item Let $\overline{u}^\star$ be the maximal strong solution to the Navier-Stokes equation starting from $u^\star_0$. If $\epsilon r_\epsilon\to 0$, then for any $\overline{v}$ which is not an initial segment of $\overline{u}^\star$, it holds that \begin{equation}\label{eq: inf rate} \inf_{\overline{\mathcal{U}}\ni \overline{v}} \limsup_{\epsilon\to 0} r_\epsilon^{-1}\log \PP\left(\ED{\overline{U}} \in \overline{\mathcal{U}}\right) = -\infty. \end{equation} \end{enumerate} Both statements hold, no matter what scaling (at least as strong as (\ref{scale-standing})) is imposed on $\epsilon, \delta$. \end{theorem}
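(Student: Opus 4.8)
The plan is to treat the two complementary speed regimes separately. In case (ii) the admissible speeds are slower than the natural speed $\epsilon^{-1}$, in case (i) at least as fast. I would first establish a rigidity statement valid in case (ii): outside an event of $\PP$-probability at most $e^{-c/\epsilon}$, the maximal strong solution $\ED{\overline{U}}$ stays inside any prescribed $\XX_{\rm st,loc}$-neighbourhood of a regular initial segment $\overline{u}^\star|_{[0,T']}$ of $\overline{u}^\star$; since $\epsilon r_\epsilon\to0$ this yields (\ref{eq: inf rate}) at once. In case (i) I would observe that (\ref{eq: cheap blowup}) is essentially a topological statement, and then deduce (\ref{eq: instant blowup}) from it together with the same rigidity estimate. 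The argument is insensitive to strengthening the scaling beyond (\ref{scale-standing}), which only makes the forcing smaller.

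\emph{Rigidity (case (ii)).} Fix $T'<\tau(\overline{u}^\star)$, or $T'=T$ if $\overline{u}^\star$ is global, and combine three estimates. (a) Concentration of the initial datum: since $\mathcal{I}_0$ is good on $(H,\mathrm w)$ with unique zero $u_0^\star$, the complement of any weak neighbourhood $N\ni u_0^\star$ is weakly closed with strictly positive infimum of $\mathcal{I}_0$, so the upper bound of Assumption \ref{hyp: initial data} gives $\PP(\ED{\overline{U}}(0)\notin N)\le e^{-c_N/\epsilon}$; together with the exponential moment bound (\ref{eq: energy estimate at gaussian ID''}), which controls $\|\ED{\overline{U}}(0)\|_H$ from above by $\|u_0^\star\|_H$ super-exponentially, this upgrades to $\PP(\|\ED{\overline{U}}(0)-u_0^\star\|_H>\rho)\le e^{-c_\rho/\epsilon}$ for each $\rho>0$. (b) Smallness of the forcing: by Gaussian (Borell--TIS) concentration and the scaling (\ref{scale-standing}), $\PP(\|\sqrt\epsilon\,\nabla\cdot\xi^\delta\|_N>\rho)\le e^{-c_\rho/\epsilon}$ in whatever norm $N$ the next step needs. (c) Stability of strong solutions: on $[0,T']$ the Navier--Stokes flow depends in a quantitative, well-posed way on the initial datum in $H$ and on the forcing (weak--strong stability provides the $H$-Gronwall bound), and the parabolic smoothing of the Stokes semigroup upgrades $H$-closeness to closeness in the finer topology of $\XX_{\rm st,loc}$, in particular forcing $\tau^\epsilon>T'$. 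Combining (a)--(c), every $\XX_{\rm st,loc}$-neighbourhood $\overline{\mathcal U}$ of $\overline{u}^\star|_{[0,T']}$ satisfies $\PP(\ED{\overline{U}}\in\overline{\mathcal U})\ge1-e^{-c/\epsilon}$ for some $c=c(\overline{\mathcal U},T')>0$. Now if $\overline v$ is not an initial segment of $\overline{u}^\star$, then $\overline v\neq\overline{u}^\star|_{[0,T']}$ for every $T'<\tau(\overline{u}^\star)$, so by Hausdorffness $\overline v$ has a neighbourhood $\overline{\mathcal U}_{\overline v}$ disjoint from a neighbourhood of some $\overline{u}^\star|_{[0,T']}$; on the good event $\ED{\overline{U}}$ lies in the latter, hence not in $\overline{\mathcal U}_{\overline v}$, so $\PP(\ED{\overline{U}}\in\overline{\mathcal U}_{\overline v})\le e^{-c/\epsilon}$ and $\limsup_\epsilon r_\epsilon^{-1}\log\PP(\ED{\overline{U}}\in\overline{\mathcal U}_{\overline v})\le-c(\epsilon r_\epsilon)^{-1}\to-\infty$, which is (\ref{eq: inf rate}).

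\emph{Cheap blow-up (case (i)).} The bound (\ref{eq: cheap blowup}) follows from continuity of the initial-segment map: using the form of the basic open sets (\ref{eq: basic opens}), if $\overline v$ is an initial segment of $\overline u$ then the basic neighbourhood of $\overline u$ with accuracy $\eta$ is contained in the basic neighbourhood of $\overline v$ with the same accuracy, so $\PP(\ED{\overline{U}}\in\overline{\mathcal U}_{\overline u,\eta})\le\PP(\ED{\overline{U}}\in\overline{\mathcal U}_{\overline v,\eta})$; applying $\liminf_\epsilon r_\epsilon^{-1}\log(\cdot)$ and then the infimum over $\eta$ (these neighbourhoods of $\overline v$ being a basis) gives (\ref{eq: cheap blowup}). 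For (\ref{eq: instant blowup}), apply (\ref{eq: cheap blowup}) with $\overline v=\star$, an initial segment of every partial path and in particular of a short regular segment $\overline{u}^\star|_{[0,T']}$; by the rigidity estimate each neighbourhood of $\overline{u}^\star|_{[0,T']}$ carries probability $\ge1-e^{-c/\epsilon}$, so, since $r_\epsilon\to\infty$, $\liminf_\epsilon r_\epsilon^{-1}\log\PP(\ED{\overline{U}}\in\overline{\mathcal U})=0$ for each such neighbourhood, whence the right-hand side of (\ref{eq: cheap blowup}) vanishes and, with the trivial bound $\PP(\cdot)\le1$, so does the left-hand side, i.e.\ (\ref{eq: instant blowup}). (Should the topology of $\XX_{\rm st,loc}$ instead make neighbourhoods of $\star$ genuinely encode a blow-up, one argues directly: since $\epsilon r_\epsilon$ is bounded below it suffices to produce an event of probability $e^{-o(1/\epsilon)}$ on which $\|\ED{\overline{U}}(t)\|_V$ exceeds a diverging threshold $M_\epsilon$ before a fixed time; an atypically large value in $V$ of the stochastic convolution built from the Stokes semigroup and $\sqrt\epsilon\,\nabla\cdot\xi^\delta$ does this, with Gaussian tail $\exp(-M_\epsilon^2/(2\epsilon\,\mathrm{tr}_V Q_\delta))=e^{-o(1/\epsilon)}$ once $M_\epsilon^2=o(\mathrm{tr}_V Q_\delta)$, and this excursion costs nothing for $\mathcal I$, which is weakly lower semicontinuous and insensitive to the $V$-norm.)

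\emph{Main obstacle.} The delicate point is step (c) near the maximal existence time $\tau(\overline{u}^\star)$: there the Gronwall constants degenerate, so $\ED{\overline{U}}$ can be controlled only on strict subintervals $[0,T']$, $T'<\tau(\overline{u}^\star)$, not on all of $[0,\tau(\overline{u}^\star))$. Converting this into the clean dichotomy (\ref{eq: inf rate}) therefore depends on the precise construction of $\XX_{\rm st,loc}$ via (\ref{eq: def XXstloc}, \ref{eq: basic opens}) and on the fact that any partial path failing to be an initial segment of $\overline{u}^\star$ is already separated, in $\XX_{\rm st,loc}$, from the regular segments $\overline{u}^\star|_{[0,T']}$, so that no spurious $\overline v$ survives the rigidity estimate. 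A secondary, quantitative obstacle --- relevant only to the direct form of (\ref{eq: instant blowup}) --- is to balance $M_\epsilon$ against $\epsilon\,\mathrm{tr}_V Q_\delta$ under (\ref{scale-standing}) so that the excursion is simultaneously strong enough to destroy $V$-regularity before a fixed time and cheap enough to have probability $e^{-o(r_\epsilon)}$.
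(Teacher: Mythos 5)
Your main argument for case (i) rests on a topological claim that is false for the topology actually placed on $\XX_{\rm st,loc}$. A basic open neighbourhood of a partial path $\overline{v}$ in (\ref{eq: basic opens}) contains the \emph{third} constraint $\min(\tau(\overline{w}),\tau_M(\overline{w}))<\tau(\overline{v})+\delta$: to lie near $\overline{v}$ a path must actually cease to be defined, or exceed the $H^1$--$H^2$ threshold $M$, shortly after $\tau(\overline{v})$. Consequently, if $\overline{v}\prec\overline{u}$ strictly, a path that tracks $\overline{u}$ and stays regular past $\tau(\overline{v})$ does \emph{not} belong to the neighbourhood of $\overline{v}$, and the inclusion $\overline{\mathcal U}_{\overline{u},\eta}\subset\overline{\mathcal U}_{\overline{v},\eta}$ you invoke fails. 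This is exactly why (\ref{eq: cheap blowup}) has genuine probabilistic content: the paper proves it by taking any tilted measures $\mathbb{Q}^\epsilon$ that force convergence to $\overline{u}$, constructing explicit plane-wave perturbations $\overline{w}^{(n)}$ as in (\ref{eq: bad u 1})--(\ref{eq: bad u 1'}) whose controls satisfy $\|g^{(n)}-g\|_{L^2([\tau',\tau],\mathcal{M})}\to 0$, and adding a Girsanov tilt on $[\tau',\tau'+n^{-1}]$ so that the solution loses $H^1$ regularity immediately after $\tau(\overline{v})$ at asymptotically zero extra entropy; the entropy method (Lemma \ref{entropymethod}) then yields (\ref{eq: cheap blowup}), and the same construction started from $\mathbb{P}$ gives (\ref{eq: instant blowup}). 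Your parenthetical fallback (a large $V$-excursion of the stochastic convolution) only addresses the trivial path $\star$, does not give the comparison between $\overline{v}$ and a general $\overline{u}$ (which requires working under the measures that track $\overline{u}$ up to $\tau'$), and in any case does not show that the \emph{solution's} $\tau_M$ drops, only that a Gaussian auxiliary process is large.

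The same misreading of the topology undermines your rigidity estimate and hence your case (ii) and your derivation of (\ref{eq: instant blowup}): a neighbourhood of a strict truncation $\overline{u}^\star|_{[0,T']}$ requires $\min(\tau,\tau_M)<T'+\delta$, i.e.\ near-loss of regularity just after $T'$, which is an \emph{atypical} event, so $\PP(\ED{\overline{U}}\in\overline{\mathcal U})\ge 1-e^{-c/\epsilon}$ cannot hold for all such neighbourhoods, and Hausdorff separation of $\overline{v}$ from some truncation does not by itself produce the exponentially small probability you need. Your underlying idea for (ii) — exponential smallness at speed $\epsilon^{-1}$ becomes $-\infty$ at any slower speed $r_\epsilon$ — is the same as the paper's, but the paper obtains the smallness differently: it classifies any $\overline{v}\npreceq\overline{u}^\star$ into three concrete failures (wrong initial datum; failure of the weak formulation detected by some $\Lambda(\varphi,\overline{v})>\theta$; violation of the energy inequality), and for each builds an explicit open set on which the exponential-supermartingale upper bound (as in (\ref{eq: UB with phi psi})) or Lemma \ref{lemma: improved UB'}, applied to the stochastic Leray extension furnished by Lemma \ref{lemma: wsuniqueness}, gives a strictly negative rate at speed $\epsilon^{-1}$. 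A Gronwall-stability argument also cannot by itself rule out paths that agree with $\overline{u}^\star$ but purport to continue regularly past $\tau(\overline{u}^\star)$, which is another reason the paper argues via the upper-bound machinery rather than via well-posedness of the flow.
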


\subsection{Structure of the paper}
The paper is structured as follows. The remainder of this section discusses connections of the results to the literature. In Section \ref{sec-2}, we introduce some objects of frequent use, including the noise $\xi$ and the regularisation. In Section \ref{sec-4}, we prove some tightness estimates in the space $\XX$, based on the energy inequality; these are used in Section \ref{sec: exist} to prove Proposition \ref{prop: main existence}. In Section \ref{sec-5}, we prove some properties of the rate function $\mathcal{I}$. The lower bound of Theorem \ref{thrm: LDP} is proven in Section \ref{sec-6}, and the upper bound is proven in Section \ref{sec-7}. Section \ref{sec: energy} relates the large deviations to the energy equality to prove Theorem \ref{thrm: Energy Equality} and Remark \ref{rmk: violations of EI}. In Section \ref{sec-9}, we discuss the triviality of large deviations of the local-in-time strong solution as a justification for working with the class of stochastic Leray solutions. 
\subsection{Comments on the literature}
\paragraph{\textbf{Stochastic Navier Stokes and Large Deviations}} By now, the analysis of classical Navier-Stokes equations has attracted significant attention, see, for example,  \cite{Cao-Titti,kochtataru} for the local-in-time existence of regular solutions, \cite{RRS,WE} for weak-strong uniqueness, \cite{albritton2022non,guillod2017numerical,jia2015incompressible,Bu20} for a recent non-uniqueness theorem in the Leray space $\XX$,  \cite{Lionsjl,PG,KL,shinbrot1974energy,da2020shinbrot,berselli2020energy,da2019energy,zhang2019remarks} for conditions related to the energy (in)equality, and Leslie-Shvydkoy \cite{leslie2018energy} for regularity analysis related to failures of the energy equality. The stochastic Navier-Stokes equations were studied by Flandoli \cite{FF}, Flandoli-Gatarek \cite{FG95}; let us also refer to the review articles \cite{mattingly2003recent,bensoussan1995stochastic,breit2018introduction} and references therein for an overview of the area. The Landau-Lifshitz-Navier-Stokes equations have attracted significant interest in relationship to numerical schemes for fluctuations, see for instance \cite{bell2007numerical,russo2021finite}.

Within the  literature on stochastic perturbations of the Navier-Stokes equations, there is by now a substantial literature on large deviation principles (LDP). Dynamic large deviation principles were obtained in \cite{SS,CD,ZZ,Z,RZ2,AX}. Related large deviations for the invariant measure were obtained by \cite{MD,BC}, and LDP for the occupation measure was studied in \cite{N,GM}. We point out that the notion of stochastic Leray solution in Definition \ref{weak-solution} differs from the classical definition of martingale solutions, see for example \cite[Definition 3.1]{FG95}. Similar solutions were shown to exist by Flandoli-Romito \cite[Theorem 3.7]{flandoliromito2008}; in this definition the martingale appearing in the energy identity is not explicitly identified, and only characterised by upper bounds on the quadratic variation. For the large deviation theory, it will be important to have the explicit characterisation of the martingale term. This has been achieved for the case of compressible Navier-Stokes in \cite{BFH}, see \cite[Defintion 3.4.1]{BFH}; for completeness we give the full argument.

 \paragraph{\textbf{Independence of Uniqueness Theory}}  Let us remark that we make no statements regarding the uniqueness or uniqueness-in-law of the stochastic Leray solutions; consequently, Theorems \ref{thrm: LDP} - \ref{thm: improved ub} apply to any choice stochastic Leray solutions $\ED{U}$. The pathwise uniqueness of solutions appears to be as hard as the problem of uniqueness for the deterministic Navier-Stokes equations, which remains open in the Leray class; see \cite{hofmanova2019non} for a discussion. In \cite{hofmanova2019non,hofmanova2023global}, the authors use a stochastic convex integration method to prove non-uniqueness in law for \emph{weak} solutions to the stochastic Navier-Stokes equations, where the pathwise energy inequality is removed. It is not yet clear whether a related construction would show non-uniqueness in the stronger setting of stochastic Leray solutions, and we will not attempt to address this problem.
  
\paragraph{\textbf{Large Deviations of Singular SPDE}} Regarding the large deviations of singular SPDEs, we refer, for example, to Hairer and Weber \cite{HW} and the references therein. Let us remark in particular \cite{HW} shows that the renormalisation constants for the stochastic Allen-Cahn equation appear in the large deviation rate functional. The situation we consider, where the noise is of the form $\nabla \cdot \xi_\delta$, is the setting of the conservative singular stochastic PDE studied by Dirr, Fehrman and the first author \cite{DFG}, and Fehrman and the first author \cite{FG22}, which establish a link between the large deviations of conservative SPDEs and those of particle systems. As already remarked, in these cases \cite{DFG}, \cite{FG22}, there is no renormalisation procedure that allows us to make sense of the SPDE with spatially uncorrelated noise. In \cite{DFG,FG16,GH}, the kinetic solution approach and the $L^1$-theory play important roles in the well-posedness of the perturbed `skeleton' equation, but, as already mentioned, there is no hope for such a strong uniqueness result in the present setting. 
\paragraph{\textbf{LDP for Particle Systems and the Quastel-Yau Lattice Gas}} A microscopic particle model, which produces the Navier-Stokes equations in $d=3$ in the incompressible limit, was introduced in \cite{QY} and the large deviations identified, and as remarked under Theorem \ref{thrm: LDP}, the model (\ref{SNS-1}) produces the same large deviations. This connection allows two different interpretations of Theorem \ref{thrm: LDP}. On the one hand, if one accepts the physical reasoning of \cite{LL87} leading to (\ref{SNS-1}), then the connection shows that the large deviations of the particle model are physically correct. On the other hand, if one accepts the particle model as fundamental, we can see ( \ref{SNS-1}) as producing a computationally tractable numerical approximation to the particle model, which improves the regularity of the sample paths from $L^2_tH^{-2}_x$ to the Leray class $\XX$ while maintaining the correct large deviation structure.
\paragraph{\textbf{Uniqueness and the Energy Equality}} While a number of different techniques have arisen to guarantee uniqueness or the energy equality, it is not known whether solutions persist in these classes. Various regularity conditions have been proposed for these problems, such as the Ladyzhenskaya-Prodi-Serrin condition for uniqueness (see \cite{RRS,WE}) and the Lions-Ladyzhenskaya criterion for the energy equality (see \cite{Lionsjl,PG,KL}). At the time of writing, the possible connection between the energy equality and uniqueness of weak solutions of the three-dimensional Navier-Stokes equations still remains open \cite{berselli2020energy}. This is resolved by Theorem \ref{thrm: Energy Equality}.

  \paragraph{\textbf{LDP for local-in-time solutions}} Finally, let us remark on literature relevant to Theorem \ref{thm: LDPstrong}. Large deviations of the stochastic Allen-Cahn equation, including the possibility of blowup, was studied by Hairer-Weber \cite{HW}, although their framework differs from the one underlying Theorem \ref{thm: LDPstrong} in assigning the whole trajectory to a single graveyard state if there is blowup, rather than tracking the behaviour up to the blowup time. The analysis in Section \ref{sec-9} is also different from the SDE case (see, for example Azencott \cite{AZ}). In moving to the PDE setting, it is possible for the process to blow up in one sense (in the present case, the divergence of stronger norms) while remaining bounded in another: there is therefore no contradiction between showing blowup in Theorem \ref{thm: LDPstrong} and the global existence required for Theorem \ref{thrm: LDP}. This is different from the SDE setting, where there is an unambiguous notion of blowup.

\section{Preliminary}\label{sec-2}

 We now introduce some further preliminary notions and calculations. We will use the notation $\lesssim$ to avoid repeatedly writing inequalities up to constants; the parameters on which the constant is allowed to depend will always be specified.

The scaling relation (\ref{scale-standing}) is imposed by the following elementary calculation.
\begin{lemma}
	\label{lemma: eigenvalue sum calculation} Suppose that (\ref{scale-standing}) holds. Then  \begin{equation}\label{eq: trace ed 1}
		\epsilon \|A^{1/2}\sqrt{Q_\delta}\|_\mathrm{HS}^2=\epsilon\mathrm{Tr}\left[AQ_\delta\right]\to 0.
	\end{equation}If instead (\ref{scale-standing-galerkin}) holds, we have \begin{equation}\label{eq: trace ed 2}
		\epsilon \|P_mA^{1/2} \sqrt{Q_\delta}\|_\mathrm{HS}^2=\mathrm{Tr}\left[P_mAQ_\delta\right]\to 0
	\end{equation} where $P_m$ is the Galerkin projection (\ref{eq: Pm}).  \end{lemma}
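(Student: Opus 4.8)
The plan is as follows. First I would observe that all three expressions in each displayed line are equal by definition: expanding $A^{1/2}\sqrt{Q_\delta}$ in the orthonormal eigenbasis $\{e_\zeta\}_{\zeta\in\mathcal{B}}$ of $A$, on which $\sqrt{Q_\delta}$ is simultaneously diagonal with eigenvalues $\sigma_{\delta,\zeta}$, gives $\|A^{1/2}\sqrt{Q_\delta}\|_\mathrm{HS}^2=\sum_\zeta\lambda_\zeta\sigma_{\delta,\zeta}^2=\mathrm{Tr}[AQ_\delta]$, and likewise after inserting the Galerkin projection $P_m$. Everything therefore reduces to the scalar estimate
\[
  \Sigma_\delta:=\sum_{\zeta\in\mathcal{B}}\frac{\lambda_\zeta}{1+\delta\lambda_\zeta^{2\beta}}\;\lesssim\;\delta^{-5/(4\beta)},
\]
with a constant depending only on $\beta$. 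To prove this I would first discard the terms with $k=0$ (for which $\lambda_\zeta=0$), then use $\lambda_\zeta=4\pi^2|k|^2$ together with the fact that at most two indices $\zeta=(k,\theta)\in\mathcal{B}$ are attached to each $k\in\mathbb{Z}^3\setminus\{0\}$ to reduce to $\Sigma_\delta\lesssim\sum_{k\neq 0}\frac{|k|^2}{1+\delta|k|^{4\beta}}$, absorbing fixed powers of $4\pi^2$ into the implied constant.

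Next I would split this lattice sum at the natural scale $N:=\delta^{-1/(4\beta)}$, the crossover where $\delta|k|^{4\beta}\simeq 1$. On the block $|k|\le N$ I bound each summand by $|k|^2\le N^2$ and the number of lattice points by $\lesssim N^3$, so this block contributes $\lesssim N^5$. On the block $|k|>N$ I bound $\frac{|k|^2}{1+\delta|k|^{4\beta}}\le\delta^{-1}|k|^{2-4\beta}$ and compare the resulting sum to $\delta^{-1}\int_N^\infty r^{2-4\beta}\,r^2\,dr=\delta^{-1}\int_N^\infty r^{4-4\beta}\,dr$ (the extra $r^2$ being the spherical volume element in $\mathbb{R}^3$), which converges precisely because the standing assumption $\beta>\tfrac54$ makes the exponent $4-4\beta<-1$, with value $\lesssim\delta^{-1}N^{5-4\beta}$; since $\delta N^{4\beta}=1$, this is $N^5$ as well. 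Hence $\Sigma_\delta\lesssim N^5=\delta^{-5/(4\beta)}$, and multiplying by $\epsilon$ and invoking (\ref{scale-standing}) yields (\ref{eq: trace ed 1}).

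For the Galerkin statement (\ref{eq: trace ed 2}), the same argument gives $\epsilon\sum_{\zeta\in\mathcal{B}_m}\lambda_\zeta\sigma_{\delta,\zeta}^2\le\epsilon\,\Sigma_\delta\lesssim\epsilon\,\delta^{-5/(4\beta)}$, since restricting a nonnegative sum to $\mathcal{B}_m$ only decreases it; and dropping the denominator altogether gives the crude alternative $\sum_{\zeta\in\mathcal{B}_m}\lambda_\zeta\sigma_{\delta,\zeta}^2\le\sum_{\zeta\in\mathcal{B}_m}\lambda_\zeta\lesssim\sum_{|k|\le m}|k|^2\lesssim m^5$. Taking whichever of the two bounds is smaller, $\epsilon\sum_{\zeta\in\mathcal{B}_m}\lambda_\zeta\sigma_{\delta,\zeta}^2\lesssim\epsilon\min\left(m^5,\delta^{-5/(4\beta)}\right)\to 0$ by (\ref{scale-standing-galerkin}).

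I do not anticipate a genuine obstacle here; the computation is elementary. The only point requiring a little care is tracking the exponent so as to see that the two frequency blocks contribute at the same order $N^5$ — this is what makes the bound $\Sigma_\delta\lesssim\delta^{-5/(4\beta)}$ sharp and the hypothesis (\ref{scale-standing}) the natural one — and to note that the role of the standing assumption $\beta>\tfrac54$ is exactly to render the high-frequency integral convergent.
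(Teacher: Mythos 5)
Your proposal is correct and follows essentially the same route as the paper: reduce both displays to the scalar sum $\sum_{k\neq 0}|k|^2(1+\delta(2\pi)^{4\beta}|k|^{4\beta})^{-1}\lesssim\delta^{-5/(4\beta)}$ (using $\beta>\tfrac54$ for convergence at high frequencies), then handle the Galerkin case by taking the minimum of this bound and the crude bound $\sum_{|k|\le m}|k|^2\lesssim m^5$, exactly as in the paper. The only cosmetic difference is that the paper compares the sum to a single integral and rescales via $u=\delta^{1/(4\beta)}r$, whereas you split the sum at the crossover frequency $N=\delta^{-1/(4\beta)}$; both give the same sharp exponent.
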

\begin{proof}[Sketch Proof]This follows from observing that $\sqrt{Q_\delta}, P_m, A^{1/2}$ are simultaneously diagonalised by the basis of eigenfunction $\{e_\zeta, \zeta\in \mathcal{B}\}$ and $$ A^{1/2}\sqrt{Q_\delta}e_\zeta = \left(\frac{\lambda_\zeta}{1+\delta|\lambda_\zeta|^{2\beta}}\right)^{1/2}e_\zeta. $$ The conclusion follows by summing $\frac{\lambda_\zeta}{1+\delta|\lambda_\zeta|^{2\beta}}$, using the exact expression for $\lambda_{(k, \theta)} \sim |k|^2$ and $\beta>\frac54$.  \end{proof} We note, for future reference, that $\sqrt{Q_\delta}$ regularises by more than one derivative, at the cost of a $\delta$-dependent constant: for all $u\in H$, \begin{equation} \label{gainonederivative}
	\|\sqrt{Q_\delta}\nabla u\|_\mathcal{M}\le C(\delta)\|u\|_H.
\end{equation}
		We will use the following equivalent characterisation of large deviations lower bounds, which mildly extends \cite[Lemma 7]{mariani2010large}.
\begin{lemma}\label{entropymethod}
Let $E$ be a \dhedit{separable and Hausdorff} space, $I: E\rightarrow[0,+\infty]$ be lower semicontinuous and let $\{\mu^{\epsilon}\}\subset\mathcal{P}(E)$ be a sequence of probability measures. Then $\{\mu^{\epsilon}\}$ satisfies the large deviation lower bound with speed $\epsilon^{-1}$ and rate $I$ if, and only if, for every $x\in E$, there exists a sequence $\{\pi^{\epsilon,x}\}\subset\mathcal{P}(E)$ such that $\pi^{\epsilon,x}\rightarrow\delta_x$ weakly in $\mathcal{P}(E)$, and
\begin{equation}\label{entropy}
\limsup_{\epsilon\rightarrow0}\epsilon{\rm{Ent}}(\pi^{\epsilon,x}|\mu^{\epsilon})\leq I(x).
\end{equation}

\end{lemma}
The `if' statement is \cite[Lemma 7]{mariani2010large}, and the `only if' statement follows from a simple calculation.\\ \\ 
Finally, we give a basic result regarding the Galerkin truncated equation (\ref{eq: galerkin approx}). Since the proof is entirely elementary, it is omitted.  \begin{lemma}\label{lemma: properties of galerkin}
	Fix a stochastic basis $(\Omega, \mathcal{F}, \{\mathcal{F}(t)\}_{t\in [0,T]}, \mathbb{P})$ equipped with a divergence-free white noise $W$. For any random variable $V_m$ taking values in $P_mH$ and which is independent of $W$, there exists a unique strong solution $U^{\epsilon}_{\delta,m}(t)$ to (\ref{eq: galerkin approx}) starting at $v_0$. The solutions satisfy, almost surely, \begin{align}\label{m-energy}
\frac{1}{2}\|U^{\epsilon}_{\delta,m}(t)\|_{H}^2=&\frac{1}{2}\|U^{\epsilon}_{\delta,m}(0)\|_{H}^2-\int_{0}^{t}\| U^{\epsilon}_{\delta,m}(r)\|_{V}^2dr\\
&-\sqrt{\epsilon}\int_0^t\langle U^{\epsilon}_{\delta,m},\nabla\cdot dW_{\delta}(r)\rangle+ \frac{\epsilon}{2}\|P_m\circ A^{1/2}\circ\sqrt{Q_{\delta}}\|_{HS}^2t.\notag
\end{align} In the special case $\delta=0$, the distribution $\cG(0, \epsilon P_m/2)$ is stationary, and if $U^\epsilon_{0,m}(0)\sim \cG(0, \epsilon I_m/2)$, then for any fixed $T>0$, the law of the process is preserved under the transformation \begin{equation}\label{eq: time reversal}
		\mathfrak{T}_T{U}^\epsilon_{0,m}:=(-U^{\epsilon}_{0,m}(T-t):0\le t\le T) =_\mathrm{d} (U^{\epsilon}_{0,m}(t):0\le t\le T).
	\end{equation}
\end{lemma} 
\section{Exponential estimates}\label{sec-4}
We will now show some exponential concentration inequalities for the Galerkin approximations  (\ref{eq: galerkin approx}) and for stochastic Leray solutions to (\ref{SNS-1}) in the sense of Definition \ref{weak-solution}. We recall the definition of the topology on $\XX$ in (\ref{eq: def X}).
\begin{proposition}
	\label{exponential3} There exists a functional ${F}:\mathbb{X}\to [0,\infty]$ with compact sublevel sets, a function $C:[0,\infty)\times [0,\infty)\times [0,\infty) \to (0,\infty)$ and an explicit $\kappa=\kappa(\eta)$ such that the following holds. Whenever $\ED{U}$ is a stochastic Leray solution to (\ref{SNS-1}) and $\EDM{U}$ is a solution to (\ref{eq: galerkin approx}), defined on a stochastic basis $(\Omega, \mathcal{F}, (\mathcal{F}_t)_{t\ge 0}, \mathbb{P}), W$ and with initial data $\EDM{U}(0), \ED{U}$ satisfying, for some $\eta>0$, \begin{equation}\label{eq: initial moment} \epsilon \log \mathbb{E}\left[\exp\left(\eta \|\ED{U}(0)\|_H^2 \right) \right]\le z, \qquad \epsilon \log \mathbb{E}\left[\exp\left(\eta \|\EDM{U}(0)\|_H^2 \right) \right]\le z\end{equation} then it holds that \begin{equation}\label{eq: FET 1}
		\epsilon \log \mathbb{E}\left[\exp\left(\kappa(\eta) F(\ED{U}) \right) \right]\le C(\epsilon\|A^{1/2}\sqrt{Q_\delta}\|_\mathrm{HS}^2,\eta,z)
	\end{equation} respectively \begin{equation}\label{eq: FET 2} \epsilon \log \mathbb{E}\left[\exp\left(\kappa(\eta) F(\EDM{U}) \right) \right]\le C(\epsilon\|P_mA^{1/2}\sqrt{Q_\delta}\|_\mathrm{HS}^2,\eta,z).
	\end{equation} \end{proposition}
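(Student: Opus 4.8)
The plan is to take $F=E+H$ with an energy part $E(u):=\sup_{t\le T}\|u(t)\|_H^2+\int_0^T\|u(s)\|_V^2\,ds$ and a weak‑modulus part $H(u):=\sum_{n\ge 1}2^{-n}(1+\lambda_{\zeta_n})^{-1}[\,t\mapsto\langle u(t),e_{\zeta_n}\rangle\,]_{C^\alpha([0,T])}$, where $\mathcal{B}=\{\zeta_1,\zeta_2,\dots\}$ is a fixed enumeration, $\alpha\in(0,\tfrac12)$ is fixed, and $[\cdot]_{C^\alpha}$ is the Hölder seminorm; note $F$ does not depend on $\epsilon,\delta,m$. I would first prove the two exponential bounds $\epsilon\log\mathbb{E}[\exp(\kappa(\eta)E(\ED{U}))]\le C(\epsilon\|A^{1/2}\sqrt{Q_\delta}\|_{HS}^2,\eta,z)$ and $\epsilon\log\mathbb{E}[\exp(\kappa(\eta)H(\ED{U}))]\le C(\epsilon\|A^{1/2}\sqrt{Q_\delta}\|_{HS}^2,\eta,z)$ (and likewise for $\EDM{U}$), which combine by Cauchy--Schwarz to yield (\ref{eq: FET 1})--(\ref{eq: FET 2}) after halving $\kappa$; then I would check separately that $\{F\le R\}$ is compact in $\XX$ for every $R$.

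For the energy bound, the point is to argue purely from the trajectorial energy inequality (\ref{eq: TEI}) --- never applying It\^o's formula to $\ED{U}$ itself, only using the inequality, which is part of Definition \ref{weak-solution} --- by a supermartingale argument in the spirit of the proof of Lemma \ref{lemma: properties of galerkin}. Writing $y(t):=\tfrac12\|\ED{U}(t)\|_H^2+\int_0^t\|\ED{U}(s)\|_V^2\,ds$, $M_t:=-\sqrt\epsilon\int_0^t\langle \ED{U},\nabla\cdot dW_\delta\rangle$, $C:=\tfrac\epsilon2\|A^{1/2}\sqrt{Q_\delta}\|_{HS}^2$, (\ref{eq: TEI}) reads $y(t)\le\tfrac12\|\ED{U}(0)\|_H^2+M_t+Ct$, and since $\sqrt{Q_\delta}$ is a contraction on $\mathcal{M}$ one has $d\langle M\rangle_t\le\epsilon\|\ED{U}(t)\|_V^2\,dt\le\epsilon\,dy(t)$. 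Splitting $\eta M_t=(\eta M_t-\tfrac{\eta^2}{2}\langle M\rangle_t)+\tfrac{\eta^2}{2}\langle M\rangle_t$ and absorbing the second piece into the left side shows $\exp\!\big((\eta-\tfrac{\eta^2\epsilon}{2})y(t)-Ct\big)\le\exp(\tfrac\eta2\|\ED{U}(0)\|_H^2)\,\mathcal{E}_t$, where $\mathcal{E}_t:=\exp(\eta M_t-\tfrac{\eta^2}{2}\langle M\rangle_t)$ is a nonnegative supermartingale with $\mathcal{E}_0=1$. For $p,q<1$ one has $\mathcal{E}_t^p\le\mathcal{E}(p\eta M)_t$ (the remaining $\langle M\rangle$-factor being $\le 1$ as $p<1$) and $\mathbb{E}[(\sup_t\mathcal{E}(p\eta M)_t)^q\mid\mathcal{F}_0]\le(1-q)^{-1}$ by the conditional maximal inequality for nonnegative supermartingales; using independence of $\ED{U}(0)$ and $W$, the hypothesis (\ref{eq: initial moment}) and Jensen, one deduces that $\epsilon\log\mathbb{E}[\exp(r\tilde\eta\sup_{t}y(t))]$ is bounded by a function of $\epsilon\|A^{1/2}\sqrt{Q_\delta}\|_{HS}^2$, $\eta$ and $z$ only, for each fixed $r<1$, where $\tilde\eta=\eta-\tfrac{\eta^2\epsilon}{2}\ge\tfrac\eta2$. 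Since $E(u)\le 3\sup_t y(t)$, this gives the energy bound with an explicit $\kappa(\eta)\asymp\eta$; the Galerkin case is identical, using the energy identity (\ref{m-energy}) and $\|P_mA^{1/2}\sqrt{Q_\delta}\|_{HS}^2$ in place of $\|A^{1/2}\sqrt{Q_\delta}\|_{HS}^2$.

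For the modulus bound, I would test the weak form (\ref{weaksense}) against the time-independent test functions $e_{\zeta_n}$: the Stokes increment is $\le\lambda_{\zeta_n}^{1/2}|t-s|^{1/2}E(\ED{U})^{1/2}$ by Cauchy--Schwarz, the convective increment is $\le C\lambda_{\zeta_n}^{1/2}|t-s|\,E(\ED{U})$ (integrating by parts in $\langle(\ED{U}\cdot\nabla)\ED{U},e_{\zeta_n}\rangle$ and using $\|\nabla e_{\zeta_n}\|_{L^\infty}\lesssim\lambda_{\zeta_n}^{1/2}$), and the stochastic increment $\sqrt\epsilon\,W_{\cdot}(\sqrt{Q_\delta}\nabla e_{\zeta_n})$ is a real Brownian motion of variance $\le\epsilon\lambda_{\zeta_n}$, hence equals $\sqrt{\epsilon\lambda_{\zeta_n}}\,B$ in law. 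Dividing by $1+\lambda_{\zeta_n}$ makes both $\lambda_{\zeta_n}^{1/2}/(1+\lambda_{\zeta_n})$ and $\sqrt{\epsilon\lambda_{\zeta_n}}/(1+\lambda_{\zeta_n})\le\sqrt\epsilon$ bounded, so for $\kappa$ small (depending on $\eta$) Cauchy--Schwarz followed by the energy bound and Fernique's theorem gives $\mathbb{E}[\exp(\kappa(1+\lambda_{\zeta_n})^{-1}[\langle\ED{U}(\cdot),e_{\zeta_n}\rangle]_{C^\alpha})]\le e^{\kappa C_T}\mathbb{E}[e^{2\kappa C_T E(\ED{U})}]^{1/2}\mathbb{E}[e^{\kappa\sqrt\epsilon[B]_{C^\alpha}}]^{1/2}$, bounded uniformly in $n$ and in $\epsilon\le 1$ by a quantity whose $\epsilon\log$ is the desired $C(\cdot)$. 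Jensen's inequality ($\sum_n 2^{-n}=1$) then upgrades this to the bound on $\mathbb{E}[\exp(\kappa H(\ED{U}))]$. For $\EDM{U}$ only the finitely many terms with $|k|\le m$ are nonzero, and the argument is unchanged.

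Finally, for compactness of $\{F\le R\}$: $E\le R$ forces boundedness in $L^\infty([0,T],H)\cap L^2([0,T],H\cap V)$, whence relative compactness in $L^2_\mathrm{w}([0,T],H\cap V)$; $H\le R$ forces uniform $\alpha$-Hölder continuity of each $t\mapsto\langle u(t),e_{\zeta_n}\rangle$, so a diagonal Arzelà--Ascoli argument together with the $L^\infty_tH$-bound gives relative compactness in $C([0,T],(H,\mathrm{w}))$ and equicontinuity of $t\mapsto u(t)$ into $H^{-1}$ (splitting $\|\cdot\|_{H^{-1}}^2$ into finitely many modes plus a small tail), and then Aubin--Lions--Simon with $H\cap V\hookrightarrow\hookrightarrow H\hookrightarrow H^{-1}$ gives relative compactness in $L^2([0,T],H)$; since $\sup_t\|u(t)\|_H^2$, $\int_0^T\|u\|_V^2$ and $[\langle u(\cdot),e_{\zeta_n}\rangle]_{C^\alpha}$ are lower semicontinuous on $\XX$, $\{F\le R\}$ is also closed, hence compact. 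The step I expect to be the main obstacle is the energy estimate: pulling $\sup_{t\le T}$ inside the exponential moment at the correct rate while working only from the inequality (\ref{eq: TEI}) rather than an It\^o identity for $\ED{U}$, and arranging that every constant enters only through $\epsilon\|A^{1/2}\sqrt{Q_\delta}\|_{HS}^2$ (resp. $\epsilon\|P_mA^{1/2}\sqrt{Q_\delta}\|_{HS}^2$), as the statement requires.
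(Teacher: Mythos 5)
Your proposal is correct, and while the energy part coincides with the paper's strategy, the time-regularity part takes a genuinely different route. For the $L^\infty_tH$ and $L^2_tV$ control you argue exactly as in Lemma \ref{exponential1}: exponentiate the trajectorial energy inequality (\ref{eq: TEI}) (never applying It\^o's formula to $\ED{U}$), dominate the quadratic variation by the dissipation using that $\sqrt{Q_\delta}$ is a contraction, extract an exponential supermartingale and use a maximal inequality to pull the supremum in time inside the moment; your conditional maximal inequality for $(\sup_t\mathcal{E}_t)^r$ is an acceptable substitute for the paper's Doob $L^2$ argument, and your constants enter only through $\epsilon\|A^{1/2}\sqrt{Q_\delta}\|_{\rm HS}^2$, $\eta$, $z$ as required (one harmless slip: the differential claim $d\langle M\rangle_t\le\epsilon\,dy(t)$ is not justified since $\|\ED{U}(t)\|_H^2$ may decrease, but you only ever use the integrated bound $\langle M\rangle_t\le\epsilon\int_0^t\|\ED{U}\|_V^2\,ds\le\epsilon y(t)$, which is fine). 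Where you diverge is the third component of $F$: the paper uses $\|u\|_{W^{\alpha,2}([0,T],H^{-\gamma}(\mathbb{T}^3))}^2$ with $\gamma>\tfrac52$, estimating the convective term through the embedding $W^{1,1}_tL^1_x\hookrightarrow W^{\alpha,2}_tH^{-\gamma}_x$ and the noise through Fernique's theorem for the Gaussian law of $W$ in $W^{\alpha,2}_tH^{-\gamma+1}_x$ (Lemma \ref{exponential2}), and then invokes Aubin--Lions plus Arzel\`a--Ascoli for compactness; you instead use a weighted sum of mode-wise H\"older seminorms $\sum_n 2^{-n}(1+\lambda_{\zeta_n})^{-1}[\langle u(\cdot),e_{\zeta_n}\rangle]_{C^\alpha}$, obtained by testing (\ref{weaksense}) against the Stokes eigenfunctions, so that the stochastic contribution per mode is a scalar Brownian motion of variance at most $\epsilon\lambda_{\zeta_n}$ and the exponential moments are summed by Jensen, after which compactness is rebuilt by hand (diagonal Arzel\`a--Ascoli in $C([0,T],(H,\mathrm{w}))$, the finite-modes-plus-tail splitting to get equicontinuity into $H^{-1}$, Simon's theorem for $L^2_tH$, and lower semicontinuity of each ingredient for closedness). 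Your route is more elementary — only one-dimensional Gaussian tail bounds and an $L^\infty$ estimate on $\nabla e_{\zeta_n}$ are needed, and no Sobolev-in-time embedding — at the cost of a longer, more ad hoc compactness verification; the paper's choice packages the same information into a standard norm for which Aubin--Lions applies immediately, which is also what is reused later (e.g.\ in Lemma \ref{lemma: time regularity}). Your attention to the $\epsilon$-scaling of $\kappa(\eta)$ and to the dependence of $C$ only on $\epsilon\|A^{1/2}\sqrt{Q_\delta}\|_{\rm HS}^2$ (resp.\ $\epsilon\|P_mA^{1/2}\sqrt{Q_\delta}\|_{\rm HS}^2$) matches what the statement demands, and is if anything more explicit than the paper's bookkeeping.
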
 Thanks to the compactness of the sets $\mathcal{K}_n:=F^{-1}([0,n])$, the conclusions can immediately be postprocessed into the following various tightness properties. \begin{corollary}\label{ET'}
	For all $\epsilon, \delta, m$, let $\ED{U}, \EDM{U}$ be solutions to (\ref{SNS-1}, \ref{eq: galerkin approx}) respectively, with initial data satisfying, for some $\eta>0$, \begin{equation}
\sup_\epsilon \epsilon \log \mathbb{E}\left[\exp\left(\eta \|\ED{U}(0)\|_H^2\right) \right]<\infty; \qquad \sup_\epsilon \sup_m \epsilon \log \mathbb{E}\left[\exp\left(\eta \|\EDM{U}(0)\|_H^2\right) \right]<\infty.
	\end{equation} Let $\ED{\mu}, \EDM{\mu}$ be the laws of $\ED{U}, \EDM{U}$ on $\mathbb{X}$. Then: \begin{enumerate}[label=\roman*)]
		\item for any fixed $\epsilon, \delta>0$, $\EDM{\mu}$ satisfies \begin{equation}\label{eq: tightness m} \limsup_n \sup_m \EDM{\mu}(\mathcal{K}_n^\mathrm{c})=0\end{equation} where $\mathcal{K}_n:=F^{-1}([0,n])$, and hence the laws $\EDM{\mu}$ are tight on $\XX$. Moreover, for all $p\ge 1$, \begin{equation}\label{eq: integrability tightness}
			\sup_m \mathbb{E}\left(\sup_{t\le T} \|\EDM{U}(t)\|_H^{2p}\right)<\infty;
		\end{equation} \item if $(\epsilon, \delta(\epsilon))$ satisfy (\ref{scale-standing}), then we have the exponential tightness \begin{equation}
	\label{eq: exp tightness galerkin} \limsup_{n\to \infty} \limsup_{\epsilon\to 0}  \sup_m \left(\epsilon\log \EDM{\mu}(\mathcal{K}_n^\mathrm{c})\right) = -\infty
\end{equation} and \begin{equation}
	\label{eq: exp tightness wo galerkin} \limsup_{n\to \infty} \limsup_{\epsilon\to 0}  \left(\epsilon\log \ED{\mu}(\mathcal{K}_n^\mathrm{c})\right) = -\infty;
\end{equation} \item if $(\epsilon, \delta(\epsilon), m(\epsilon))$ satisfy (\ref{scale-standing-galerkin}), then \begin{equation}
	\label{eq: exp tightness galerkin galerkin} \limsup_{n\to \infty} \limsup_{\epsilon\to 0}  \left(\epsilon\log {\mu}^\epsilon_{\delta(\epsilon), m(\epsilon)}(\mathcal{K}_n^\mathrm{c})\right) = -\infty.
\end{equation} \end{enumerate}
\end{corollary}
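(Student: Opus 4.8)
The plan is to obtain all three items of Corollary~\ref{ET'} as a direct postprocessing of the exponential moment bound in Proposition~\ref{exponential3}, using only Chebyshev's inequality, the elementary trace estimates of Lemma~\ref{lemma: eigenvalue sum calculation}, and the compactness of the sublevel sets $\mathcal{K}_n=F^{-1}([0,n])$; no further analytic input is needed, so the argument is essentially bookkeeping. Throughout, the exponential moments are normalised by $1/\epsilon$, as is natural for the large-deviation scaling.

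For the common core, fix $\eta>0$ as in the hypothesis and set $z:=\sup_\epsilon \epsilon\log\mathbb E[\exp(\eta\|\ED{U}(0)\|_H^2/\epsilon)]<\infty$, respectively $z:=\sup_\epsilon\sup_m \epsilon\log\mathbb E[\exp(\eta\|\EDM{U}(0)\|_H^2/\epsilon)]<\infty$ in the Galerkin case, so that Proposition~\ref{exponential3} applies with this single $z$ for every $\epsilon$ (and $m$). Writing $a_{\epsilon,\delta}:=\epsilon\|A^{1/2}\sqrt{Q_\delta}\|_{\mathrm{HS}}^2$ and $a_{\epsilon,\delta,m}:=\epsilon\|P_mA^{1/2}\sqrt{Q_\delta}\|_{\mathrm{HS}}^2$, and using that $P_m$ commutes with $A$ and $Q_\delta$, so that $a_{\epsilon,\delta,m}\le a_{\epsilon,\delta}$, Proposition~\ref{exponential3} followed by Chebyshev's inequality yields, for every $n$ and all $\epsilon,m$,
\[
\epsilon\log \EDM{\mu}(\mathcal{K}_n^{\mathrm{c}})=\epsilon\log\mathbb P\big(F(\EDM{U})>n\big)\le C\big(a_{\epsilon,\delta,m},\eta,z\big)-\kappa(\eta)\,n,
\]
together with the identical bound for $\ED{\mu}$ with $C(a_{\epsilon,\delta},\eta,z)$ in place of $C(a_{\epsilon,\delta,m},\eta,z)$.

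Each item is then obtained by controlling the trace term. For (i), with $\epsilon,\delta$ fixed one has $a_{\epsilon,\delta,m}\le a_{\epsilon,\delta}<\infty$ uniformly in $m$, so the right-hand side above is $\le C(a_{\epsilon,\delta},\eta,z)-\kappa(\eta)n\to-\infty$ as $n\to\infty$, uniformly in $m$; this is (\ref{eq: tightness m}), and since each $\mathcal{K}_n$ is compact in $\XX$ this is exactly the claimed tightness of $\{\EDM{\mu}\}_m$. For the moment bound (\ref{eq: integrability tightness}) I would argue either that $F$, as built in Proposition~\ref{exponential3}, dominates $u\mapsto\sup_{t\le T}\|u(t)\|_H^2$, so that $\sup_t\|\EDM{U}(t)\|_H^{2p}\le F(\EDM{U})^p\lesssim_{p,\kappa,\epsilon}\exp(\kappa(\eta)F(\EDM{U})/\epsilon)$ and one integrates; or, to avoid the precise form of $F$, directly from the trajectorial energy inequality (\ref{eq: TEI}) by bounding the bracket of the stochastic integral by $a_{\epsilon,\delta}\,T\sup_t\|\EDM{U}(t)\|_H^2$, raising to the power $p$, taking expectations, and absorbing the martingale term via Burkholder-Davis-Gundy and Young's inequality (using $\sup_m\mathbb E\|\EDM{U}(0)\|_H^{2p}<\infty$, which follows from the exponential moment hypothesis). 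For (ii) and (iii) I would invoke Lemma~\ref{lemma: eigenvalue sum calculation}: under (\ref{scale-standing}) one has $a_{\epsilon,\delta(\epsilon)}\to0$ by (\ref{eq: trace ed 1}) (hence also $\sup_m a_{\epsilon,\delta(\epsilon),m}\to0$), and under (\ref{scale-standing-galerkin}) one has $a_{\epsilon,\delta(\epsilon),m(\epsilon)}\to0$ by (\ref{eq: trace ed 2}); since the function $C$ is explicit and stays bounded as its first argument tends to $0$, with $\limsup_\epsilon C(a_\epsilon,\eta,z)\le C(0,\eta,z)=:c_0<\infty$ for the relevant trace term $a_\epsilon$, substituting into the displayed bound and letting first $\epsilon\to0$ and then $n\to\infty$ gives (\ref{eq: exp tightness galerkin}), (\ref{eq: exp tightness wo galerkin}) and (\ref{eq: exp tightness galerkin galerkin}) respectively.

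I do not expect a genuine obstacle here: the substantive work — the exponential-martingale and localisation estimate producing the functional $F$ from the trajectorial energy inequality — is already contained in Proposition~\ref{exponential3}. The only points that require more than mechanical manipulation are the uniformity in $m$, which rests on the operator monotonicity $P_mAQ_\delta\preceq AQ_\delta$ together with monotonicity (or a continuous majorant) of $C$ in its first slot; and the moment bound (\ref{eq: integrability tightness}), where one must either extract the domination property of $F$ from the proof of Proposition~\ref{exponential3} or re-run the energy estimate at polynomial order.
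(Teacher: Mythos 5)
Your proposal is correct and follows essentially the paper's own route: both deduce all three items by applying Chebyshev's inequality to the exponential moment bound of Proposition \ref{exponential3}, note that the constant $C(\cdot,\eta,z)$ stays bounded because the trace term $\epsilon\|P_mA^{1/2}\sqrt{Q_\delta}\|_{\mathrm{HS}}^2\le\epsilon\|A^{1/2}\sqrt{Q_\delta}\|_{\mathrm{HS}}^2$ is controlled by Lemma \ref{lemma: eigenvalue sum calculation} under (\ref{scale-standing}), (\ref{scale-standing-galerkin}), use compactness of $\mathcal{K}_n$ for tightness, and obtain (\ref{eq: integrability tightness}) from the fact that $F$ dominates the squared $L^\infty_tH$ norm so that polynomial moments are absorbed by the exponential bound. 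The alternative Burkholder--Davis--Gundy argument you sketch for (\ref{eq: integrability tightness}) is unnecessary but harmless; the first option you give is exactly what the paper does.
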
 \begin{remark}\label{rmk: generalise ET proof} Throughout this section, we will assume the notation of Proposition \ref{exponential3}.  To avoid writing very similar arguments, we will present the details of the argument for $\EDM{U}$, based on the energy inequality (\ref{m-energy}). The arguments for $\ED{U}$ will be identical, up to replacing $\|P_m A^{1/2}\sqrt{Q_\delta}\|_\mathrm{HS}^2$ by $\|A^{1/2}\sqrt{Q_\delta}\|_\mathrm{HS}^2$. \end{remark} \begin{proof}[Proof of Proposition \ref{exponential3} and Corollary \ref{ET'}] Let us first define some function spaces. We define divergence-free Sobolev spaces $H^\gamma(\mathbb{T}^3)$ to be the completion of $\mathcal{X}_{\rm d.f.}$ under the norm $$ \|\varphi\|_{H^\gamma(\mathbb{T}^3)}^2:=\sum_{\zeta \in \mathcal{B}}(1+\lambda_\zeta)^\gamma \langle e_\zeta, \varphi\rangle^2.$$ For $\alpha\in (0,1)$ and a function $u:[0,T]\to H^{-\gamma}(\mathbb{T}^3)$, we write $ [u]_{\dot{C}^\alpha([0,T],H^{-\gamma}(\mathbb{T}^3))}$ for the homogenous H\"older seminorm $$ [u]_{\dot{C}^\alpha([0,T],H^{-\gamma}(\mathbb{T}^3))}:=\sup_{0\le s<t\le T}\frac{\|u(t)-u(s)\|_{H^{-\gamma}(\mathbb{T}^3)}}{|t-s|^\alpha}. $$ With this notation, we fix $\alpha\in (0,\frac12), \gamma>\frac52$, and take $F$ to be the functional \begin{equation}
	\label{eq: prototype mathcalf} F(u):=\|u\|^2_{L^2([0,T];V)}+\|u\|^2_{L^\infty([0,T],H)} + \|u\|_{W^{\alpha, 2}([0,T],H^{-\gamma}(\mathbb{T}^3))}+[u]_{\dot{C}^\alpha([0,T],H^{-\gamma}(\mathbb{T}^3))}.
\end{equation}  We first remark on the compactness of the sublevel sets. By definition of $\XX$, compactness of a subset $\mathcal{K}\subset \XX$ is equivalent to of compactness of $\mathcal{K}\subset L^2([0,T],H)$, (weak) compactness in $A\subset L_\mathrm{w}^2([0,T],H \cap V)$ and compactness $\mathcal{K}\subset C([0,T],(H,\mathrm{w}))$; each of these will follow from classical theorems. The spaces $H\cap V\hookrightarrow H \hookrightarrow H^{-\gamma}(\mathbb{T}^3)$ are reflexive, and by Rellich-Kondrachov, the embedding $H\cap V\hookrightarrow H$ is compact. We are therefore in the setting of the Aubin-Lions lemma \cite{aubin1963theoreme}, so the embedding
\begin{equation*}
L^2([0,T];H\cap V)\cap W^{\alpha,2}([0,T];H^{-\gamma}(\mathbb{T}^3)) \hookrightarrow L^2([0,T],H)
\end{equation*} is compact, and hence sublevel sets $F^{-1}([0,M])$ are compact in $L^2([0,T],H)$. On the set $F^{-1}([0,M])$, the control on $\|u\|_{L^\infty([0,T],H)}, \|u\|_{L^2([0,T],V)}$ imply a control on the norm $\|u\|_{L^2([0,T],H\cap V)}$, so $F^{-1}([0,M])$ is bounded in $L^2([0,T],H\cap V)$, and hence precompact in the weak topology by the Banach-Alaoglu theorem; compactness follows on observing the lower semicontinuity of $F$ with respect to this topology. Finally, we observe that the topology of $C([0,T],(H,w))$ restricted to $F^{-1}([0,M])$ coincides with that induced by $d(u,v):=\sup_{t\le T}\|u(t)-v(t)\|_{H^{-\gamma}(\mathbb{T}^3)}$, since the $H^{-\gamma}(\mathbb{T}^3)$-norm induces the weak topology on the (weakly) compact subset $\{u\in H: \|u\|_H\le M\}$. Using the Arzel\`a-Ascoli \cite[Theorem 4.10]{KS}, the set $F^{-1}([0,M])$ is compact in the metric $d$, and hence is compact in the topology of $C([0,T],(H,w))$.  \bigskip \\ We will now discuss the argument for (\ref{eq: FET 1}); the case for (\ref{eq: FET 2}) is identical. We will prove estimates of the same form for the first two terms appearing in $F(\EDM{U})$ in Lemma \ref{exponential1} with functions $\kappa_i(\eta), i=1,2$, and upper bounds $C_i$, and a similar estimate will be proven for the third and fourth term in Lemmas \ref{exponential2} - \ref{exponential2'} with new functions $\kappa_i(\eta), C_i, i=3,4$. Setting $\kappa(\eta):=\min(\kappa_i(\eta),1\le i\le 4)/4>0$, we use H\"older's inequality to decompose the left-hand side of (\ref{eq: FET 1}) into the simpler terms \begin{equation}\begin{split}\label{eq: F est}
	\epsilon \log \mathbb{E}\left[\exp\left(\kappa(\eta) F(\EDM{U}) \right) \right] & \le \frac\epsilon 4 \log \mathbb{E}\left[\exp\left(4\kappa(\eta )\|\EDM{U}\|_{L^\infty([0,T],H)}^2 \right) \right] \\ & \hspace{1cm} + \frac\epsilon 4 \log \mathbb{E}\left[\exp\left(4\kappa(\eta )\|\EDM{U}\|_{L^2([0,T],V)}^2 \right) \right] \\ & \hspace{1cm} + \frac\epsilon 4 \log \mathbb{E}\left[\exp\left(4\kappa(\eta )\|\EDM{U}\|_{W^{\alpha, 2}([0,T],H^{-\gamma}(\mathbb{T}^3))} \right) \right] \\ & \hspace{1cm} + \frac\epsilon 4 \log \mathbb{E}\left[\exp\left(4\kappa(\eta )[\EDM{U}]_{\dot{C}^{\alpha}([0,T],H^{-\gamma}(\mathbb{T}^3))} \right) \right].
\end{split} \end{equation} Since we chose $\kappa(\eta)$ so that $4\kappa(\eta)\le \kappa_i(\eta), 1\le i\le 3$, we can replace the three terms on the right-hand side by the functions $C_1, C_2, C_3, C_4$ produced in Lemmata \ref{exponential1}, \ref{exponential2}, \ref{exponential2'}, and the desired bound (\ref{eq: FET 1}) holds with the function $C:=(C_1+C_2+C_3+C_4)/4$. \bigskip \\ In Corollary \ref{ET'}, the main points (\ref{eq: tightness m}, \ref{eq: exp tightness galerkin}, \ref{eq: exp tightness wo galerkin}, \ref{eq: exp tightness galerkin galerkin}) follow from (\ref{eq: FET 1}, \ref{eq: FET 2}) by Chebyshev's inequality, noting in (i) that the right-hand side of (\ref{eq: FET 1}) remains bounded as $m\to \infty$ with $\epsilon, \delta$ fixed, and in (ii), (iii) that the right-hand sides of (\ref{eq: FET 1}, \ref{eq: FET 2}) remain bounded under respectively (\ref{scale-standing}) in (ii), and under (\ref{scale-standing-galerkin}) in (iii) thanks to Lemma \ref{lemma: eigenvalue sum calculation}. The point (\ref{eq: integrability tightness}) follows from (\ref{eq: FET 1}) since $F(u)\ge \sup_{0\le t\le T} \|u(t)\|_H$.    \end{proof}

The remainder of the section is dedicated to the proofs of the estimates postponed from (\ref{eq: F est}).  We first prove the estimates for $L^\infty_tH$ and $L^2_tV$.

\begin{lemma}\label{exponential1}
Under the hypotheses of Proposition \ref{exponential3}, we have the estimates
\begin{equation}\label{eq: E11}
 \epsilon\log\mathbb{E}\Big(\exp\Big\{\frac{\min(\eta,\frac12)}{\epsilon}\|\EDM{U}\|_{L^2([0,T];V)}^2\Big\}\Big)\leq z+\frac{T}{2}\epsilon\|P_mA^{1/2}\sqrt{Q_\delta}\|_\mathrm{HS}^2
\end{equation} and \begin{equation}\label{eq: E12}\epsilon\log\mathbb{E}\Big(\exp\Big\{\frac{\min(\eta,\frac12)}{2\epsilon}\sup_{t\le T} \|\EDM{U}(t)\|_{H}^2\Big\}\Big)\leq \frac{z}{2}+\frac{T}{4}\epsilon\|P_mA^{1/2}\sqrt{Q_\delta}\|_\mathrm{HS}^2. \end{equation}
The same holds for $\ED{U}$, replacing $P_mA^{1/2}\sqrt{Q_\delta}$ by $A^{1/2}\sqrt{Q_\delta}$.
\end{lemma}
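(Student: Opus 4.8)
The plan is to derive both estimates directly from the energy (in)equality (\ref{m-energy}) (for $\EDM{U}$; respectively (\ref{eq: TEI}) for $\ED{U}$), with the single probabilistic input that the Dol\'eans--Dade exponential of the stochastic term is a supermartingale. Write $\nu:=\|P_mA^{1/2}\sqrt{Q_\delta}\|_{\mathrm{HS}}^2$ and let $M_t$ be the martingale part of (\ref{m-energy}), i.e.\ $M_t=-\sqrt\varepsilon\int_0^t\langle\EDM{U},\nabla\cdot\xi^\delta(r)\rangle\,dr=\sqrt\varepsilon\int_0^t\langle\sqrt{Q_\delta}\nabla\EDM{U}(r),dW(r)\rangle$, so that $\tfrac12\|\EDM{U}(t)\|_H^2+\int_0^t\|\EDM{U}(r)\|_V^2\,dr\le\tfrac12\|\EDM{U}(0)\|_H^2+M_t+\tfrac\varepsilon2\nu t$ for all $t\le T$. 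Since the eigenvalues of $\sqrt{Q_\delta}$ are bounded by $1$ (cf.\ (\ref{gainonederivative})), the quadratic variation is $\varepsilon\int_0^\cdot\|\sqrt{Q_\delta}\nabla\EDM{U}(r)\|_{\mathcal M}^2\,dr\le\varepsilon\int_0^\cdot\|\EDM{U}(r)\|_V^2\,dr$; and $\int_0^T\|\EDM{U}\|_V^2<\infty$ almost surely (part of being a stochastic Leray solution, automatic in the Galerkin case), so for every $a>0$ the exponential $\mathcal{E}(\tfrac a\varepsilon M)_t:=\exp(\tfrac a\varepsilon M_t-\tfrac{a^2}{2\varepsilon^2}\langle M\rangle_t)$ is a nonnegative local martingale, hence a supermartingale, with $\mathbb{E}[\mathcal{E}(\tfrac a\varepsilon M)_t\mid\mathcal{F}_0]\le1$. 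As noted in Remark~\ref{rmk: generalise ET proof}, I will only describe the argument for $\EDM{U}$; that for $\ED{U}$ is identical with $\nu$ replaced by $\|A^{1/2}\sqrt{Q_\delta}\|_{\mathrm{HS}}^2$.

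For (\ref{eq: E11}), I would set $t=T$ in the energy inequality, discard $\tfrac12\|\EDM{U}(T)\|_H^2\ge0$, multiply through by $a/\varepsilon$ and exponentiate. Using $\exp(\tfrac a\varepsilon M_T)=\mathcal{E}(\tfrac a\varepsilon M)_T\exp(\tfrac{a^2}{2\varepsilon^2}\langle M\rangle_T)\le\mathcal{E}(\tfrac a\varepsilon M)_T\exp(\tfrac{a^2}{2\varepsilon}\int_0^T\|\EDM{U}\|_V^2)$ and moving the last factor to the left-hand side, the term $\int_0^T\|\EDM{U}\|_V^2$ survives with coefficient $\tfrac1\varepsilon(a-\tfrac{a^2}2)$, which is positive exactly for $a<2$. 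Choosing $a:=1-\sqrt{1-2\min(\eta,\tfrac12)}$ makes $a-\tfrac{a^2}2=\min(\eta,\tfrac12)$ while keeping $a\le1$ and $a\le2\min(\eta,\tfrac12)\le2\eta$. Taking expectations, conditioning on $\mathcal{F}_0$ to bound the $\mathcal{E}(\tfrac a\varepsilon M)_T$ factor by $1$, and using $\mathbb{E}[\exp(\tfrac a{2\varepsilon}\|\EDM{U}(0)\|_H^2)]\le\mathbb{E}[\exp(\tfrac\eta\varepsilon\|\EDM{U}(0)\|_H^2)]\le e^{z/\varepsilon}$ from (\ref{eq: initial moment}), gives (\ref{eq: E11}), the $\tfrac T2\varepsilon\nu$ term coming from $\tfrac a2\le\tfrac12$.

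For (\ref{eq: E12}) I would instead keep the dissipation term. From the energy inequality at a time $t\le T$ together with $\tfrac\theta\varepsilon M_t\le\log\mathcal{E}(\tfrac\theta\varepsilon M)_t+\tfrac{\theta^2}{2\varepsilon}\int_0^t\|\EDM{U}\|_V^2$, one obtains, for $\theta<2$ and after discarding the nonnegative leftover multiple of $\int_0^t\|\EDM{U}\|_V^2$,
\[
\exp\Big(\tfrac\theta{2\varepsilon}\|\EDM{U}(t)\|_H^2\Big)\le R_t\,e^{\theta\nu T/2},\qquad R_t:=\exp\Big(\tfrac\theta{2\varepsilon}\|\EDM{U}(0)\|_H^2\Big)\,\mathcal{E}\big(\tfrac\theta\varepsilon M\big)_t ,
\]
where $R_t$ is a nonnegative supermartingale with $\mathbb{E}[R_0]=\mathbb{E}[\exp(\tfrac\theta{2\varepsilon}\|\EDM{U}(0)\|_H^2)]$. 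Taking the supremum over $t$, Doob's maximal inequality gives $\mathbb{P}(\sup_{t\le T}R_t>\lambda)\le\mathbb{E}[R_0]/\lambda$, and a short layer-cake computation converts this into $\mathbb{E}[(\sup_{t\le T}R_t)^{1/2}]\le2\,\mathbb{E}[R_0]^{1/2}$, whence $\mathbb{E}[\exp(\tfrac\theta{4\varepsilon}\sup_{t\le T}\|\EDM{U}(t)\|_H^2)]\le2\,e^{\theta\nu T/4}\,\mathbb{E}[\exp(\tfrac\theta{2\varepsilon}\|\EDM{U}(0)\|_H^2)]^{1/2}$. Taking $\theta:=2\min(\eta,\tfrac12)\le1$ and invoking (\ref{eq: initial moment}) as before yields (\ref{eq: E12}), up to an additive $\varepsilon\log2\to0$ which is negligible (and could be removed by optimising the exponent in the layer-cake step, at the price of a worse implicit constant).

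The step I expect to be most delicate is the \emph{self-improvement} needed to close (\ref{eq: E11}): after rearranging the energy identity, the quadratic variation $\langle M\rangle_T\le\varepsilon\int_0^T\|\EDM{U}\|_V^2$ is controlled only by the quantity one is trying to bound, and naively it re-enters with the wrong sign. The resolution is that the dissipation $-\int\|\EDM{U}\|_V^2$ appearing in the energy identity and the $-\tfrac{a^2}{2\varepsilon^2}\langle M\rangle$ hidden in the Dol\'eans--Dade exponent together leave a \emph{net negative} multiple of $\int\|\EDM{U}\|_V^2$ precisely when $a<2$, so that it may be absorbed rather than estimated; this is exactly the mechanism producing the threshold $\min(\eta,\tfrac12)$. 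The remaining subtlety — passing from fixed-time bounds to the supremum in (\ref{eq: E12}) — is routine, handled by Doob's inequality applied to the explicit supermartingale $R_t$ at the cost of a harmless multiplicative constant.
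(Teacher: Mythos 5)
Your proposal is correct and follows essentially the same route as the paper: exponentiate the pathwise energy (in)equality (\ref{m-energy})/(\ref{eq: TEI}), absorb the quadratic variation of the Dol\'eans--Dade exponential into the dissipation term (this is exactly the paper's supermartingale $M^\epsilon_t$ mechanism producing the threshold $\min(\eta,\tfrac12)$), and pass to the supremum in time via a maximal inequality. The only differences are bookkeeping: your optimisation of the multiplier $a$ actually delivers the full coefficient $\tfrac{\theta}{\varepsilon}$ claimed in (\ref{eq: E11}) (the paper's intermediate estimate (\ref{eq: E11 follows}) only gives $\tfrac{\theta}{2\varepsilon}$), and your supermartingale/layer-cake maximal estimate replaces the paper's Doob $L^2$ inequality for the submartingale $Z^\epsilon_t$, incurring an additive $\varepsilon\log 2$ of the same harmless type as the $\varepsilon\log 4$ implicit in the paper's own final display for (\ref{eq: E12}).
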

\begin{proof}
We will write out the proof only in the case $\EDM{U}$; the case for $\ED{U}$ is identical.  Applying It\^o's formula to $\|\EDM{U}\|_H^2$,  we find that, $\mathbb{P}^{}$-almost surely,
\begin{align}\label{energyineq03}
\frac{1}{2}\|\EDM{U}(t)\|_{H}^2\leq&\frac{1}{2}\|\EDM{U}(0)\|_{H}^2-\int_{0}^{t}\| \EDM{U}(r)\|_{V}^2dr\notag\\
&-\sqrt{\epsilon}\int_0^t\langle \EDM{U}(r),P_m \nabla\cdot dW_{\delta}(r)\rangle+\frac12 \epsilon \|P_m A^{1/2}\sqrt{Q_\delta}\|_\mathrm{HS}^2t.
\end{align}
Let us set $\theta:=\min(\eta, \frac12)$. Applying the exponential function to (\ref{energyineq03}), it follows that, $\mathbb{P}$-almost surely,
\begin{align}\label{expineq}
&\exp\Big\{\frac{\theta}{\epsilon}\Big(\frac{1}{2}\int_{0}^{t}\| \EDM{U}(r)\|_{V}^2dr+\frac12 \|\EDM{U}(t)\|_{H}^2\Big)\Big\}\\ & \hspace{1cm}\le \exp\Big\{\frac{1}{\epsilon}(\theta \|U^\epsilon_{\delta}(0)\|_{H}^2+\frac{\theta}{2}\|P_m A^{1/2}\sqrt{Q_\delta}\|_\mathrm{HS}^2 T)-\frac{\theta}{\sqrt{\epsilon}}\int_{0}^{t}\langle \EDM{U}(r),\nabla\cdot dW_{\delta}(r)\rangle-\frac{\theta}{2\epsilon}\int_0^t\| \EDM{U}\|_{V}^2dr\Big\}.\notag
\end{align}
The quadratic variance of the martingale $\frac{\theta}{\sqrt{\epsilon}}\int_{0}^{t}\langle \EDM{U},\nabla\cdot dW_\delta(r)\rangle$ is bounded by
\begin{align*}
\Big\langle-\frac{\theta}{\sqrt{\epsilon}}\int_{0}^{\cdot}\langle \EDM{U},\nabla\cdot dW_{\delta}(r)\rangle\Big\rangle_t & = \frac{\theta^2}{\epsilon}\sum_{\zeta\in \mathcal{B}_m, 1\le i\le 3}\Big\langle\int_0^{\cdot}\langle \partial_i\EDM{U},\sqrt{Q_{\delta}}e_\zeta\rangle d\beta^{\zeta,i}(r)\Big\rangle_t\\
 & =\frac{\theta^{2}}{\epsilon}\sum_{\zeta \in \mathcal{B}_m, 1\le i\le 3}\int_0^t\langle\sqrt{Q_{\delta}}\partial_i{\EDM{U}(r)},e_\zeta\rangle^2dr\\
 & =\frac{\theta^{2}}{\epsilon}\int_{0}^{t}\|\sqrt{Q_{\delta}}\nabla \EDM{U}\|_{\mathcal{M}}^2dr\le \frac{\theta}{2\epsilon}\int_{0}^{t}\|\EDM{U}\|_{V}^2dr
\end{align*}
where the last line follows because $\theta\le \frac12$. Since the quadratic variation is at most half of the negative term, the properties of exponential martingales (see \cite{KS}) show that $(M^\epsilon_t)^p$ is a nonnegative, continuous supermartingale starting at 1, for any $p\in [0,1]$, where
\begin{equation}\label{p-supermartingale}
M^{\epsilon}_t:=\exp\Big\{-\frac{\theta}{\sqrt{\epsilon}}\int_0^t\langle \EDM{U}(r),\nabla\cdot dW_{\delta}(r)\rangle-\frac{\theta}{2\epsilon}\int_0^t\|\EDM{U}(r)\|_{V}^2dr\Big\}.
\end{equation}
 Returning to (\ref{expineq}) and taking expectations at $t=T$, we use the case $p=1$ and the hypothesis (\ref{eq: initial moment}) to estimate the right-hand side, to obtain \begin{equation}
	\begin{split}\label{eq: E11 follows}
		\epsilon \log \mathbb{E}\left(\exp\left(\frac{\theta}{2\epsilon}\int_0^T \|\EDM{U}(r)\|_V^2 dr\right)\right) \le z+\frac{T}{2}\epsilon\|P_mA^{1/2}\sqrt{Q}_\delta\|_{\rm HS}^2
	\end{split}
\end{equation} from which (\ref{eq: E11}) follows. To control the $L^\infty_tH$ norm, we apply Doob's maximal inequality to the submartingale $$ Z^\epsilon_t:=\exp\Big\{-\frac{\theta}{4\sqrt{\epsilon}}\int_0^t\langle \EDM{U}(r),\nabla\cdot dW_\delta(r)\rangle\Big\}$$ and control the expectation of the square by \begin{equation}\begin{split}
	\mathbb{E}\left[\sup_{t\le T}(Z^\epsilon_t)^2\right]\le 4\mathbb{E}\left[(Z^\epsilon_T)^2\right] &=4\mathbb{E}\left[(M^\epsilon_T)^{1/2}\exp\left(\frac{\theta}{4\epsilon}\int_0^T\|\EDM{U}(r)\|_V^2 dr\right)\right] \\ & \le 4\mathbb{E}[M^{\epsilon}_T]^{1/2}\mathbb{E}\left[\exp\left(\frac{\theta}{2\epsilon}\int_0^T \|\EDM{U}(r)\|_V^2 dr\right)\right]^{1/2} \\ & \le 4\exp\left(\frac{1}{2\epsilon}\left(z+\frac{T\epsilon}{2}\|P_mA^{1/2}Q_\delta\|_{\rm HS}^2\right)\right)\end{split}
\end{equation} where we have used $\mathbb{E}M^{\epsilon}_T\le 1$ and (\ref{eq: E11 follows}). Substituting this bound into (\ref{expineq}) gives an upper bound for the $L^\infty_tH$ norm on the left-hand side, and (\ref{eq: E12}) is proven.\end{proof}
Since it is closely related to the previous proof, we now give a lemma which will eventually allow us to prove the improved upper bound in Theorem \ref{thm: improved ub}. \begin{lemma}\label{lemma: improved UB'}
	Under the same hypotheses as Theorem \ref{thrm: LDP}, under the scaling (\ref{scale-standing}), there exists a function $S:[0,\infty)\to [0,\infty)$, vanishing only at $0$, such that, for all $u\in \XX$, there exists an open set $\mathcal{U}\ni u$ such that \begin{equation}
		\limsup_{\epsilon\to 0} \epsilon\log \ED{\mu}(\mathcal{U})\le -S(\mathcal{E}^\star(u))
	\end{equation} where $\mathcal{E}^\star(u)$ is defined by (\ref{eq: Estar}). The same holds for $\EDM{\mu}$ under the scale (\ref{scale-standing-galerkin}).
\end{lemma}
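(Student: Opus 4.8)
The plan is to show that being $\XX$-close to a trajectory $u$ whose energy-inequality defect at some time $t^\star$ is about $c:=\mathcal{E}^\star(u)$ forces the martingale term in the trajectorial energy inequality (\ref{eq: TEI}) to be atypically positive at $t^\star$, and then to estimate that event. We may assume $c>0$, since for $c=0$ the claim holds trivially with $S(0):=0$. Write $G_t(v):=\tfrac12\|v(t)\|_H^2+\int_0^t\|v(s)\|_V^2\,ds$, so that $\mathcal{E}^\star(u)=\sup_{t\le T}(G_t(u)-\tfrac12\|u_0^\star\|_H^2)^+$; I would pick $t^\star\le T$ with $G_{t^\star}(u)>\tfrac12\|u_0^\star\|_H^2+\tfrac c2$, which exists since $\mathcal{E}^\star(u)=c>\tfrac c2$, and set $\mathcal{U}:=\{v\in\XX:G_{t^\star}(v)>\tfrac12\|u_0^\star\|_H^2+\tfrac c2\}$. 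Then $u\in\mathcal{U}$ by construction, and $\mathcal{U}$ is open because $G_{t^\star}$ is lower semicontinuous on $\XX$: evaluation at $t^\star$ is continuous into $(H,\mathrm{w})$ and $\|\cdot\|_H^2$ is weakly lower semicontinuous, while $v\mapsto\int_0^{t^\star}\|v(s)\|_V^2\,ds$ is the square of a seminorm on $L^2([0,T],H\cap V)$, hence convex and norm-continuous, hence weakly lower semicontinuous, and the topology of $\XX$ restricts to the weak topology on that factor.

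Next I would reduce $\ED{\mu}(\mathcal{U})$ to an event about a single martingale. Set $N_t:=-\sqrt\epsilon\int_0^t\langle\ED{U}(s),\nabla\cdot dW_\delta(s)\rangle$, a continuous local martingale with $\langle N\rangle_t=\epsilon\int_0^t\|\sqrt{Q_\delta}\nabla\ED{U}(s)\|_{\mathcal{M}}^2\,ds\le\epsilon\int_0^t\|\ED{U}(s)\|_V^2\,ds$ since $\sqrt{Q_\delta}$ is a contraction, and put $\tau_\epsilon:=\tfrac\epsilon2\|A^{1/2}\sqrt{Q_\delta}\|_{\mathrm{HS}}^2T$, which tends to $0$ by Lemma \ref{lemma: eigenvalue sum calculation}. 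Then (\ref{eq: TEI}) reads $G_t(\ED{U})\le\tfrac12\|\ED{U}(0)\|_H^2+N_t+\tau_\epsilon$ a.s., for all $t\le T$. Put $B_1:=\{\|\ED{U}(0)\|_H^2>\|u_0^\star\|_H^2+\tfrac c4\}$. On $\mathcal{U}\setminus B_1$, once $\epsilon$ is small enough that $\tau_\epsilon\le\tfrac c8$: combining the defining inequality of $\mathcal{U}$, the bound (\ref{eq: TEI}) at $t^\star$, and $\|\ED{U}(0)\|_H^2\le\|u_0^\star\|_H^2+\tfrac c4$ gives $N_{t^\star}\ge\tfrac c4$; and since $\int_0^{t^\star}\|\ED{U}\|_V^2\le G_{t^\star}(\ED{U})\le\tfrac12\|u_0^\star\|_H^2+\tfrac c8+N_{t^\star}+\tau_\epsilon=:C_0+N_{t^\star}$ there, we obtain the self-referential bracket bound $\langle N\rangle_{t^\star}\le\epsilon(C_0+N_{t^\star})$, with $C_0=C_0(c,\epsilon)\to\tfrac12\|u_0^\star\|_H^2+\tfrac c8$.

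The heart of the argument is a single exponential-supermartingale estimate exploiting this structure. For each deterministic $\lambda>0$, $\mathcal{E}^\lambda_t:=\exp(\lambda N_t-\tfrac{\lambda^2}2\langle N\rangle_t)$ is a nonnegative local martingale, hence a supermartingale with $\mathbb{E}[\mathcal{E}^\lambda_{t^\star}]\le1$. Feeding $\langle N\rangle_{t^\star}\le\epsilon(C_0+N_{t^\star})$ and $N_{t^\star}\ge\tfrac c4$ into the exponent on $\mathcal{U}\setminus B_1$ and choosing $\lambda=\kappa/\epsilon$ with $\kappa:=c/(c+4C_0)\in(0,1)$ — the elementary one-variable optimum — gives $\mathcal{E}^\lambda_{t^\star}\ge\exp(c^2/(32\epsilon C_1))$ with $C_1:=C_0+\tfrac c4$, so Markov's inequality yields $\mathbb{P}(\mathcal{U}\setminus B_1)\le\exp(-c^2/(32\epsilon C_1))$, whence $\limsup_\epsilon\epsilon\log\mathbb{P}(\mathcal{U}\setminus B_1)\le-c^2/(32C_1^\infty)$ with $C_1^\infty=\tfrac12\|u_0^\star\|_H^2+\tfrac{3c}8$. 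For $\mathbb{P}(B_1)$, the exponential-moment part of Assumption \ref{hyp: initial data} and Markov give $\limsup_\epsilon\epsilon\log\mathbb{P}(B_1)\le-\Lambda^\star(\|u_0^\star\|_H^2+\tfrac c4)$, where $\Lambda^\star(a):=\sup_{\eta\ge0}(\eta a-Z(\eta))$ is the Legendre transform of $Z$ from (\ref{eq: energy estimate at gaussian ID}); the assumption $\limsup_{\eta\to0}Z(\eta)/\eta\le\|u_0^\star\|_H^2$ makes $\Lambda^\star(a)>0$ whenever $a>\|u_0^\star\|_H^2$. Combining the two pieces via $\epsilon\log(x+y)\le\epsilon\log2+\max(\epsilon\log x,\epsilon\log y)$, we get $\limsup_{\epsilon\to0}\epsilon\log\ED{\mu}(\mathcal{U})\le-S(c)$ with $S(c):=\min(\Lambda^\star(\|u_0^\star\|_H^2+\tfrac c4),\ c^2/(16\|u_0^\star\|_H^2+12c))$ for $c>0$ and $S(0):=0$; this $S$ depends only on $c=\mathcal{E}^\star(u)$, is strictly positive for $c>0$, and is explicit in terms of $Z$, as required. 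The Galerkin case is verbatim, with (\ref{m-energy}) replacing (\ref{eq: TEI}), $P_mA^{1/2}\sqrt{Q_\delta}$ replacing $A^{1/2}\sqrt{Q_\delta}$, and Lemma \ref{lemma: eigenvalue sum calculation} invoked under (\ref{scale-standing-galerkin}) to send $\tau_\epsilon\to0$.

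The main obstacle will be the self-referential martingale bound: $N$ is not Gaussian with deterministic variance, and $\langle N\rangle$ is dominated only by the a priori unbounded dissipation $\int_0^{t^\star}\|\ED{U}\|_V^2$, so no off-the-shelf tail estimate applies. What saves the argument is that the trajectorial energy inequality simultaneously forces $N_{t^\star}$ to be large on $\mathcal{U}\setminus B_1$ and controls $\langle N\rangle_{t^\star}$ linearly in $N_{t^\star}$ itself, which is exactly enough for one application of the exponential-supermartingale inequality — with no appeal to the cruder global moment bound (\ref{eq: E11}). A secondary point requiring care is the lower semicontinuity of $G_{t^\star}$ for the partly-weak topology of $\XX$, which is what makes $\mathcal{U}$ legitimately open; note also that $t^\star$, hence $\mathcal{U}$, depends on $u$ while the final bound $-S(c)$ does not.
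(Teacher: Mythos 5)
Your proof is correct, and it follows the paper's overall skeleton — pick a time $t^\star$ realising (most of) the defect, take the same open set $\mathcal{U}$ defined through the lower semicontinuous functional $v\mapsto\tfrac12\|v(t^\star)\|_H^2+\int_0^{t^\star}\|v\|_V^2$, invoke the trajectorial energy inequality (\ref{eq: TEI}), and split off the bad initial-data event, which you control exactly as the paper does via $Z$ and (\ref{eq: energy estimate at gaussian ID''}). Where you genuinely diverge is the core martingale estimate: the paper decomposes the remaining event further into $\{\|\ED{U}\|_{L^2([0,T],V)}^2>\lambda\}$ and $\{M^\epsilon_{t^\star}>\theta,\ \langle M^\epsilon\rangle_{t^\star}\le\lambda\epsilon\}$, controls the first by the global exponential dissipation estimate of Lemma \ref{exponential1} (hence through $Z$ again) and the second by a Bernstein-type bound, and then optimises over the auxiliary threshold $\lambda$ together with $\eta,\eta'$; you instead observe that on $\mathcal{U}\setminus B_1$ the same energy inequality that forces $N_{t^\star}\ge c/4$ also yields the self-referential bracket bound $\langle N\rangle_{t^\star}\le\epsilon(C_0+N_{t^\star})$, and a single application of Markov to the stochastic exponential $\exp(\lambda N_{t^\star}-\tfrac{\lambda^2}{2}\langle N\rangle_{t^\star})$ with the optimised $\lambda=\kappa/\epsilon$, $\kappa=c/(c+4C_0)$, closes the estimate in one stroke. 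Your computation checks out (the exponent is monotone in $N_{t^\star}$ since $\kappa\in(0,1)$, and the stochastic exponential of the continuous local martingale $N$ is a supermartingale, so $\mathbb{E}[\mathcal{E}^\lambda_{t^\star}]\le1$ is legitimate). What your route buys is that the cumulant function $Z$ enters only through the initial-data event, no auxiliary dissipation level needs to be optimised, and you obtain the closed-form rate $S(c)=\min\bigl(\Lambda^\star(\|u_0^\star\|_H^2+\tfrac c4),\ c^2/(16\|u_0^\star\|_H^2+12c)\bigr)$; the paper's three-event splitting is slightly more modular (it reuses the already-established estimate (\ref{eq: E11}) verbatim) but leaves $S$ only implicitly as a max--min over three parameters. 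Both arguments transfer verbatim to the Galerkin setting, as you note.
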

\begin{proof}
We argue the case for $\ED{U}, \ED{\mu}$; the case for $\EDM{U}, \EDM{\mu}$ is identical. Let us fix $u\in \XX$ and shorten notation by setting $\theta:=\frac14\mathcal{E}^\star(u)$. The case $\mathcal{E}^\star(u)=0$ is trivial, by setting $S(0)=0, \mathcal{U}=\mathbb{X}$, so we will consider only $\mathcal{E}^\star(u)>0$ in the sequel. By definition, there exists $0\le t\le T$ such that \begin{equation}\label{eq: EI violation}
		\frac12\|u(t)\|_H^2+\int_0^t\|u(s)\|_V^2ds>\frac12 \|u^\star_0\|_H^2+3\theta.
	\end{equation} For this particular $t$, let us now take $\mathcal{U}$ to be the set \begin{equation}\label{eq: choice of U}
		\mathcal{U}:=\left\{v\in \XX: \frac12\|v(t)\|_H^2+\int_0^t\|v(s)\|_V^2 ds > \frac12\|u_0^\star\|_H^2+3\theta\right\}
	\end{equation} which is open in $\XX$ thanks to lower semicontinuity of $v\mapsto \|v(t)\|_H, v\mapsto \int_0^t \|v(s)\|_V^2$ in the topology of $\XX$. We now use the trajectorial energy inequality (\ref{eq: TEI}), which is valid for $\ED{U}$ by definition. Let us write $M^\epsilon_t$ for the martingale $$ M^\epsilon_s:=-\sqrt{\epsilon} \int_0^s\langle \ED{U}, \nabla\cdot dW_\delta(r)\rangle.$$ Using Lemma \ref{lemma: eigenvalue sum calculation} and the scaling hypothesis (\ref{scale-standing}), we may choose $\epsilon_0>0$ small enough that, for all $0<\epsilon<\epsilon_0$, $\epsilon \|A^{1/2}\sqrt{Q_\delta}\|_{\rm HS}^2T<\theta$. For some $\lambda>0$ to be chosen, and all $\epsilon<\epsilon_0$, we may break up the event $\{\ED{U}\in \mathcal{U}\}$ as  \begin{equation}\begin{split}
		\label{eq: break up violation of EI} \left\{\ED{U}\in \mathcal{U}\right\} &\subset \left\{\|\ED{U}\|^2_{L^2([0,T],V)}>\lambda\right\} \cup \left\{M^\epsilon_t>\theta, \|\ED{U}\|_{L^2([0,T],V)}^2\le \lambda\right\} \\&\hspace{5cm} \cup \left\{\|\ED{U}(0)\|_H^2>\|u^\star_0\|_H^2+\theta\right\}\end{split}
	\end{equation} where the reference time $t \in [0,T]$ in the second event is the same one as in the definition of $\mathcal{U}$ in (\ref{eq: choice of U}) according to (\ref{eq: EI violation}).  We now estimate the rate of decay of the probabilities of these events for general $\lambda>0$ in terms of the cumulant generating function $Z(\eta)$ defined at (\ref{eq: energy estimate at gaussian ID}), and will see that the optimal exponential rate, given by optimising in $\lambda$, is strictly positive when $\theta>0$. Returning to the definition of $\theta:=\frac13\mathcal{E}^\star(u)$, this gives a definition of the function $S$, implicit in the function $Z$, and concludes the claims of the lemma. \medskip \\ For the first event $\{\|\ED{U}\|^2_{L^2([0,T],V)}>\lambda\}$, we repeat the arguments leading to (\ref{eq: E11}) for the stochastic Leray solution $\ED{U}$. Taking the limit $\epsilon\to 0$, the second term on the right-hand side vanishes, and we find, for all $\eta\in (0, \frac12]$, \begin{equation}
		\limsup_{\epsilon}\epsilon \log \mathbb{E}\left(\exp\left\{\frac{\eta}{\epsilon}\|\ED{U}\|^2_{L^2([0,T],V)}\right\}\right) \le Z(\eta)
	\end{equation} so that, using a Chebyshev estimate, \begin{equation}
		\limsup_{\epsilon\to 0} \epsilon \log\PP\left(\|\ED{U}\|^2_{L^2([0,T],V)}>\lambda\right) \le -\lambda\eta+Z(\eta).
	\end{equation} To control the probability of the second event, we recall that the quadratic variation of $M^\epsilon$ is controlled by $$ \langle M^\epsilon\rangle_s\le \epsilon\int_0^s \|\ED{U}(r)\|_V^2dr \le \epsilon \|\ED{U}\|_{L^2([0,T],V)}^2 $$ so the probability of second event may be estimated by elementary calculations to find $$ \mathbb{P}\left(M^\epsilon_t>\theta, \|\ED{U}\|_{L^2([0,T],V)}^2\le \lambda\right) \le \mathbb{P}\left(M^\epsilon_t>\theta, \langle M^\epsilon\rangle_t\le \lambda \epsilon\right)\le e^{-\frac{\theta^2}{2\lambda}}.$$ For the final term, which concerns the large deviations of the initial data, we use the definition of the function $Z$ and a Chebyshev estimate: for any $\eta'>0$, \begin{equation}
		\limsup_{\epsilon\to 0}\epsilon\log \mathbb{P}\left(\|\ED{U}(0)\|_H^2>\|u^\star_0\|_H^2+\theta\right) \le Z(\eta')-\eta'(\|u_0^\star\|_H^2+\theta)
	\end{equation} Combining the previous three displays, it follows that, for any $\eta, \eta', \lambda>0$, \begin{equation}\label{eq: penalise violations of EI 1} \limsup_{\epsilon\to 0}\epsilon \log \ED{\mu}(U) \le -\min\left(\lambda \eta -Z(\eta), \frac{\theta^2}{2\lambda}, \eta'(\|u_0^\star\|_H^2+\theta)-Z(\eta')\right)\end{equation} Since $\limsup_{\eta'\to 0} Z(\eta')/\eta'\le \|u_0^\star\|_H^2$ by (\ref{eq: energy estimate at gaussian ID''}), it follows that the right-hand side is strictly negative if we choose $\eta'>0$ small enough and $\lambda>0$ large enough. We can thus characterise $S$ by optimising over $\eta, \lambda, \eta' $ to find $$ S(4\theta):=\max_{\lambda>0} \min\left(Z^\star(\lambda), \frac{\theta^2}{2\lambda}, Z^\star(\|u^\star_0\|_H^2+\theta)\right) $$ where $Z^\star$ denotes the Legendre transform.   \end{proof}

Finally, we show the exponential estimates for the time-continuity, where the choices of parameter $\gamma>\frac52$, $\alpha\in (0,\frac12)$ will become important. We will first prove the estimate in $W^{\alpha, 2}([0,T],H^{-\gamma}(\mathbb{T}^3))$ and give details of the proof; the proof of the estimate in $\dot{C}^{\alpha}([0,T],H^{-\gamma}(\mathbb{T}^3))$ in Lemma \ref{exponential2'} is nearly identical, and only the necessary modifications will be discussed.
\begin{lemma}\label{exponential2}
With the notation and hypotheses of Proposition \ref{exponential3}, fix $\alpha\in(0,\frac12)$ and $\gamma>\frac52$. Then there exists an explicitable function $\kappa_3(\eta)>0$ and a constant $C=C(\alpha, \gamma, T)$ such that
\begin{equation}\label{wa2 est}
\epsilon\log\mathbb{E}\Big(\exp\Big\{\frac{\kappa_3(\eta)}{\epsilon}\|\EDM{U}\|_{W^{\alpha,2}([0,T];H^{-\gamma}(\mathbb{T}^3))}\Big\}\Big)\leq C+z+\epsilon\|P_mA^{1/2}\sqrt{Q_\delta}\|_\mathrm{HS}^2.
\end{equation}
The same holds for the stochastic Leray solution $\ED{U}$, still in the notation of and under all hypotheses in Proposition \ref{exponential3}.
\end{lemma}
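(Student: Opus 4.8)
\emph{Plan.} I would write the inhomogeneous norm as $\|\EDM U\|_{W^{\alpha,2}([0,T];H^{-\gamma})}^2 = \|\EDM U\|_{L^2([0,T];H^{-\gamma})}^2 + [\EDM U]^2$, where $[\EDM U]^2 := \int_0^T\!\int_0^T |t-s|^{-1-2\alpha}\,\|\EDM U(t)-\EDM U(s)\|_{H^{-\gamma}}^2\,dt\,ds$ is the Gagliardo seminorm. The first term is harmless, since $\|\EDM U\|_{L^2([0,T];H^{-\gamma})}^2\le T\|\EDM U\|_{L^\infty([0,T];H)}^2$, so after rescaling $\kappa$ it is already covered by (\ref{eq: E12}) of Lemma \ref{exponential1}. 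For the seminorm I would insert the equation, writing $\EDM U(t)-\EDM U(s) = -\int_s^t(A\EDM U + B_m(\EDM U))\,dr - \sqrt\varepsilon\,P_m\nabla\cdot(W_\delta(t)-W_\delta(s))$, so that by $(a+b)^2\le 2a^2+2b^2$ it suffices to bound the ``drift seminorm'' $[D]^2$ and the ``noise seminorm'' $[N]^2$ separately.

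The noise part is the easy one: $N(s,t):=\sqrt\varepsilon\,P_m\nabla\cdot(W_\delta(t)-W_\delta(s))$ is a centred Gaussian variable in $H^{-\gamma}$ with $\mathbb E\|N(s,t)\|_{H^{-\gamma}}^2 = \varepsilon\,|t-s|\,c_{\delta,m}$, where $c_{\delta,m}$ is the Hilbert--Schmidt norm of the (regularised) noise coefficient as an operator into $H^{-\gamma}$; the key point is that $c_{\delta,m}\lesssim\sum_{k\in\mathbb Z^3\setminus 0}|k|^{2-2\gamma}<\infty$ \emph{precisely because $\gamma>\tfrac52$}, and this bound is uniform in $m,\delta$. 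By Gaussian hypercontractivity $\mathbb E\|N(s,t)\|_{H^{-\gamma}}^{2p}\lesssim_p(\varepsilon|t-s|)^p$, and since $\alpha<\tfrac12$ makes $\int_0^T\!\int_0^T|t-s|^{-2\alpha}\,dt\,ds$ finite, Minkowski's inequality gives $\mathbb E\big([N]^2\big)^p\lesssim_p\big(\varepsilon\,C(\alpha,\gamma,T)\big)^p$; summing the exponential series shows $\mathbb E\exp(\tfrac{\kappa}{\varepsilon}[N]^2)$ is bounded uniformly in $\varepsilon,\delta,m$ for $\kappa$ small, so $[N]^2$ contributes only the constant $C$ to the right side of (\ref{wa2 est}). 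Equivalently, one diagonalises the quadratic form $[N]^2/\varepsilon$ in the white noise and observes that its top eigenvalue is bounded.

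For the drift I would use two ingredients. First, the elementary $H^{-\gamma}$ bounds $\|A u\|_{H^{-\gamma}}=\|u\|_{H^{2-\gamma}}\le\|u\|_H$ (valid since $\gamma>2$) and $\|B_m(u)\|_{H^{-\gamma}}\lesssim\|u\otimes u\|_{H^{1-\gamma}}\lesssim\|u\otimes u\|_{L^1(\mathbb T^3)}=\|u\|_H^2$, the middle inequality using the embedding $L^1(\mathbb T^3)\hookrightarrow H^{-s}$ for $s>\tfrac32$ together with $\gamma-1>\tfrac32$. Second, the ``fractional integration'' estimate that a primitive $v(t)=\int_0^t\phi(r)\,dr$ of a Bochner-$L^1$ function satisfies $\|v\|_{W^{\alpha,2}([0,T];B)}\lesssim_{\alpha,T}\|\phi\|_{L^1([0,T];B)}$ whenever $\alpha<\tfrac12$ (that is, $L^1\hookrightarrow W^{\alpha-1,2}$ in the time variable). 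Applying this with $B=H^{-\gamma}$ and $\phi=A\EDM U+B_m(\EDM U)$, and then Hölder in time together with the Poincar\'e inequality $\|u\|_H\lesssim\|u\|_V$, yields $[D]^2\lesssim\big(\int_0^T(\|\EDM U(r)\|_H+\|\EDM U(r)\|_H^2)\,dr\big)^2$, hence a bound by a polynomial in $\|\EDM U\|_{L^\infty([0,T];H)}$ and $\|\EDM U\|_{L^2([0,T];V)}$, equivalently in the quantities controlled on the right-hand side of the energy identity (\ref{m-energy}).

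The delicate point — and what I expect to be the main obstacle — is the final step: passing from this pathwise polynomial bound to an exponential-moment estimate, since the bound on $[D]^2$ is superquadratic in $\EDM U$ (it contains terms such as $\|\EDM U\|_{L^\infty_t H}^4$ and $\|\EDM U\|_{L^\infty_t H}^2\|\EDM U\|_{L^2_t V}^2$), so one cannot simply invoke the moment bounds of Lemma \ref{exponential1} factor by factor. The plan is to return to the trajectorial energy inequality (\ref{m-energy}) (or (\ref{eq: TEI}) for $\ED U$), which shows that all the norms in question are controlled, pathwise, by $\|\EDM U(0)\|_H^2 + \sup_{t\le T}|M^\varepsilon_t| + \varepsilon\|P_mA^{1/2}\sqrt{Q_\delta}\|_\mathrm{HS}^2\,T$ with $\langle M^\varepsilon\rangle_T\le\varepsilon\|\EDM U\|_{L^2_t V}^2$; exploiting this self-bounding structure together with the exponential-supermartingale and Doob argument already used around (\ref{p-supermartingale})--(\ref{eq: E11 follows}), and with Hölder's and Young's inequalities to split the products, one obtains $\varepsilon\log\mathbb E\exp(\tfrac{\kappa_3(\eta)}{\varepsilon}[D]^2)\le C+z+\varepsilon\|P_mA^{1/2}\sqrt{Q_\delta}\|_\mathrm{HS}^2$ for a sufficiently small $\kappa_3(\eta)>0$. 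Finally, combining the three pieces — the $L^2_tH^{-\gamma}$ part, $[N]^2$ and $[D]^2$ — via Hölder's inequality exactly as in (\ref{eq: F est}) gives (\ref{wa2 est}); the proof for the stochastic Leray solution $\ED U$ is word-for-word the same, with $\|A^{1/2}\sqrt{Q_\delta}\|_\mathrm{HS}$ in place of $\|P_mA^{1/2}\sqrt{Q_\delta}\|_\mathrm{HS}$ and (\ref{eq: TEI}) in place of (\ref{m-energy}).
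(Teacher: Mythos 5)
Your decomposition into the $L^2_tH^{-\gamma}$ part plus the Gagliardo seminorm, your treatment of the noise seminorm by a direct covariance/hypercontractivity computation (where the paper instead applies Young's inequality and Fernique's theorem to the Gaussian law of $W$ on $W^{\alpha,2}([0,T];H^{-\gamma+1}(\mathbb{T}^3))$), and the two embeddings you use for the drift ($L^1(\mathbb{T}^3)\hookrightarrow H^{-\gamma+1}(\mathbb{T}^3)$ in space and the primitive estimate $W^{1,1}\hookrightarrow W^{\alpha,2}$ in time, which together are exactly the paper's embedding $W^{1,1}([0,T];L^1(\mathbb{T}^3))\subset W^{\alpha,2}([0,T];H^{-\gamma}(\mathbb{T}^3))$) are all sound and uniform in $m,\delta$. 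The gap is precisely at the step you yourself flag as delicate, and it cannot be repaired along the lines you propose. Because you carry the \emph{square} of the seminorm through the argument, the convective term contributes $[D]^2\gtrsim\bigl(\int_0^T\|\EDM{U}\|_H\|\EDM{U}\|_V\,dr\bigr)^2$, a quantity comparable to $\|\EDM{U}\|_{L^\infty_tH}^2\|\EDM{U}\|_{L^2_tV}^2$, respectively $\|\EDM{U}\|_{L^\infty_tH}^4$. But the energy identity (\ref{m-energy}) together with Assumption \ref{hyp: initial data} only yields Gaussian-type integrability, i.e.\ tails $\mathbb{P}(\|\EDM{U}\|^2_{L^\infty_tH}+\|\EDM{U}\|^2_{L^2_tV}>x)\lesssim e^{-cx/\varepsilon}$, which is exactly what Lemma \ref{exponential1} and the supermartingale/Doob argument around (\ref{p-supermartingale}) encode. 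Against such tails, the exponential of $\kappa/\varepsilon$ times a \emph{quartic} quantity is not integrable for any $\kappa>0$: already for initial data $\cG(u_0,\epsilon Q_\delta/2)$ one has $\mathbb{E}\exp(\kappa\|\EDM{U}(0)\|_H^4/\varepsilon)=\infty$. So no self-bounding or exponential-supermartingale refinement of the same energy inequality can deliver $\varepsilon\log\mathbb{E}\exp(\kappa_3[D]^2/\varepsilon)\le C+z+\dots$; the failure is structural, not a matter of sharper constants.

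The paper's proof avoids this by running the whole argument at the level of the norm rather than its square: it estimates $\epsilon\log\mathbb{E}\exp\{\frac{\kappa_3}{\varepsilon}\|\EDM{U}\|_{W^{\alpha,2}([0,T];H^{-\gamma}(\mathbb{T}^3))}\}$, splits the (unsquared) norm along the mild identity into four terms, and then the convective term contributes only $\int_0^T\|\EDM{U}\|_H\|\EDM{U}\|_V\,ds\le\frac12(\|\EDM{U}\|^2_{L^\infty_tH}+\|\EDM{U}\|^2_{L^2_tV})$, a \emph{quadratic} quantity controlled by (\ref{eq: E11})--(\ref{eq: E12}); the noise term is then a single application of Young plus Fernique. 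In other words, the only version these energy methods can give is the one with $\|\EDM{U}\|_{W^{\alpha,2}}$ (not its square) in the exponent -- which is also all that is needed for the compactness of the sublevel sets of $F$ in Proposition \ref{exponential3} -- and the square appearing in the literal statement (\ref{wa2 est}) is a discrepancy with the paper's own proof: on initial-data events of Gaussian cost $\sim R^2/\varepsilon$ the nonlinearity makes $\|\EDM{U}\|^2_{W^{\alpha,2}}$ of order $R^4$, so the squared bound appears genuinely out of reach. By taking the squared statement at face value you were led into exactly the quartic trap that the unsquared formulation is designed to avoid.
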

\begin{proof}

As before, we argue only the case for $\EDM{U}$. For $\kappa_3=\kappa_3(\eta)>0$ to be chosen later, we multiply by $\kappa_3$ and take the ${W^{\alpha,2}([0,T];H^{-\gamma}(\mathbb{T}^3))}$-norm of the identity $$ \EDM{U}(t) =\EDM{U}(0) - \int_0^t A\EDM{U}(s)ds - \int_0^t P_m ((\EDM{U}(s)\cdot \nabla)\EDM{U}(s)) ds - \sqrt{\epsilon}\int_0^t P_m \nabla \cdot dW_\delta(dr).$$ As in the proof of Proposition \ref{exponential3}, we can decompose the norm into the norm of each term:
\begin{equation}\label{decompose}
\epsilon \log \mathbb{E}\exp\Big\{\frac{\kappa_3}{\epsilon}\|\EDM{U}\|_{W^{\alpha,2}([0,T];H^{-\gamma}(\mathbb{T}^3))}\Big\} \le \sum_{\ell=1}^4 \EDM{\mathcal{T}}(\ell) \end{equation} where the 4 terms are \begin{equation}
	\EDM{\mathcal{T}}(1):=\epsilon \log \mathbb{E}\exp\Big\{\frac{4\kappa_3}{\epsilon}\|\EDM{U}(0)\|_{W^{\alpha,2}([0,T];H^{-\gamma}(\mathbb{T}^3))}\Big\};
\end{equation} \begin{equation}
	\EDM{\mathcal{T}}(2):=\epsilon \log \mathbb{E}\exp\Big\{\frac{4\kappa_3}{\epsilon}\Big\|\int_0^{\cdot}\Delta \EDM{U} ds\Big\|_{W^{\alpha,2}([0,T];H^{-\gamma}(\mathbb{T}^3))}\Big\};
\end{equation}
\begin{equation}\EDM{\mathcal{T}}(3):=\epsilon\log \mathbb{E}\exp\Big\{\frac{4\kappa_3}{\epsilon}\Big\|\int_0^{\cdot}P_m (\EDM{U}\cdot\nabla)\EDM{U}ds\Big\|_{W^{\alpha,2}([0,T];H^{-\gamma}(\mathbb{T}^3))}\Big\}; \end{equation} \begin{equation}
	\EDM{\mathcal{T}}(4):= \epsilon\log \mathbb{E}\exp\Big\{\frac{4\kappa_3}{\sqrt{\epsilon}}\|P_m \nabla\cdot W_{\delta}\|_{W^{\alpha,2}([0,T];H^{-\gamma}(\mathbb{T}^3))}\Big\}.
\end{equation}
We now deal with the terms one by one. In $\EDM{\mathcal{T}}(1)$, there is no time dependence, so that $W^{\alpha, 2}$-norm is bounded by $2(1+\|\EDM{U}\|^2_H)$, and if we choose $8\kappa_3\le \eta$, we may use (\ref{eq: initial moment}) to find a bound $z+2$ on the first term. For the viscous term $\EDM{\mathcal{T}}(2)$, we can use the definition to check that $\|\Delta u\|_{H^{-1}(\mathbb{T}^3)}\le \|u\|_V$, and use Lemma \ref{exponential1} and Young inequality to see that
\begin{align}\label{es-1}
\EDM{\mathcal{T}}(2) & \leq\epsilon \log \mathbb{E}\exp\Big\{\frac{4\kappa_3}{\epsilon}\Big(\int_0^T\|\Delta \EDM{U}\|_{H^{-1}(\mathbb{T}^3)}^2ds\Big)^{1/2}\Big\}\\
& \le \epsilon \log \mathbb{E}\exp\Big\{\frac{2\kappa_3}{\epsilon}\Big(\int_0^T\| \EDM{U}\|_{V}^2ds+1\Big)\Big\} \le 2+z+T+\frac{T}{2}\|P_mA^{1/2}\sqrt{Q_\delta}\|_{\mathrm{HS}}^2.
\notag
\end{align}
 For the nonlinear term $\EDM{\mathcal{T}}(3)$, since $\alpha\in(0,\frac12)$, $\gamma>\frac32$, Kondrachov's embedding theorem implies that the inclusion $$W^{1,1}([0,T];L^1(\mathbb{T}^3))\subset W^{\alpha,2}([0,T];H^{-\gamma}(\mathbb{T}^3))$$ is continuous, and we observe that $P_m$ is nonexpansive in the norm of $H^{-\gamma}(\mathbb{T}^3)$. Using H\"older's inequality and allowing an absolute constant $c$ to change from line to line,
\begin{equation}\begin{split}\label{walpha-1}
\EDM{\mathcal{T}}(3) & \le \epsilon \log \mathbb{E}\exp\Big\{\frac{4\kappa_3}{\epsilon}\Big\|\int_0^{\cdot} (\EDM{U}\cdot\nabla)\EDM{U} ds\Big\|_{W^{\alpha,2}([0,T];H^{-\gamma}(\mathbb{T}^3))}\Big\} \\ & \le \epsilon \log \mathbb{E}\exp\Big\{\frac{c\kappa_3}{\epsilon}\Big\|\int_0^{\cdot} (\EDM{U}\cdot\nabla)\EDM{U} ds\Big\|_{W^{1,1}([0,T];L^1(\mathbb{T}^3))}\Big\}\\
& \leq \epsilon \log \mathbb{E}\exp\Big\{\frac{c\kappa_3}{\epsilon}\int_0^T\|(\EDM{U}\cdot\nabla)\EDM{U} \|_{L^1(\mathbb{T}^3)}ds\Big\} \\& \leq\epsilon \log \mathbb{E}\exp\Big\{\frac{c\kappa_3}{\epsilon}\int_0^T\|\EDM{U}\|_{H}\| \EDM{U}\|_{V}ds\Big\}.
\end{split}\end{equation} We now bound the final expression above by $$ \epsilon \log \mathbb{E}\exp \left\{\frac{c\kappa_3}{2\epsilon}\left( \|\EDM{U}\|_{L^\infty([0,T];H)}^2 + \|\EDM{U}\|_{L^2([0,T];V)}^2\right)\right\}$$ which we control by Lemma \ref{exponential1}, if we shrink $\kappa_3$ (if necessary) so that ${c\kappa_3}\le \min(\eta, \frac14)$ and separating the two terms with H\"older's inequality in the same way as in the proof of Proposition \ref{exponential3}. All together, we conclude that, \begin{align}\label{walpha-1'}
\EDM{\mathcal{T}}(3) \le 2+z+\epsilon \|P_mA^{1/2}\sqrt{Q_\delta}\|_\mathrm{HS}^2.
\end{align}
We finally turn to the noise term $\EDM{\mathcal{T}}(4)$. For $\delta>0$, $\gamma>\frac52$, the maps $\sqrt{Q_{\delta}}$, $P_m$ are nonexpansive in $H^{-\gamma}(\mathbb{T}^3)$, whence $P_m \sqrt{Q_{\delta}}$ will only shrink the norm of  $\|\cdot\|_{W^{\alpha,2}([0,T];H^{-\gamma}(\mathbb{T}^3))}$. By using Young's inequality, and for $c>0$ to be chosen later,
\begin{align*}
\EDM{\mathcal{T}}(4)\le &\epsilon \log \mathbb{E}\exp\Big\{\frac{4\kappa_3}{\sqrt{\epsilon}}\|\sqrt{Q_{\delta}}W\|_{W^{\alpha,2}([0,T];H^{-\gamma+1}(\mathbb{T}^3))}\Big\}\notag\\
\le &\epsilon \log \mathbb{E}\exp\Big\{\frac{4\kappa_3}{\sqrt{\epsilon}}\|W\|_{W^{\alpha,2}([0,T];H^{-\gamma+1}(\mathbb{T}^3))}\Big\}\\
\leq&\epsilon \log \mathbb{E}\exp\Big\{c\|W\|_{W^{\alpha,2}([0,T];H^{-\gamma+1}(\mathbb{T}^3))}^2+\frac{16\kappa_3^2}{c\epsilon}\Big\}.
\end{align*}
For $\gamma>\frac{5}{2}$, $\alpha\in(0,\frac{1}{2})$, by the H\"older regularity of Brownian motion, $W$ induces a Gaussian measure on $W^{\alpha,2}([0,T];H^{-\gamma+1}(\mathbb{T}^3))$. Fernique's theorem therefore implies that, for sufficiently small $c>0$, $$ L(\alpha):=\log \mathbb{E}\exp\left\{c\|W\|^2_{W^{\alpha,2}([0,T],H^{-\gamma+1}(\mathbb{T}^3))}\right\}<\infty. $$ We note that this last display is independent of $\epsilon, \delta, m$. Combining, we have proven that
\begin{equation}
		\label{es-3}
\EDM{\mathcal{T}}(4) \leq  \frac{16\kappa_3^2}{c}+L(\alpha)\end{equation}
Gathering the estimates (\ref{es-1}), (\ref{walpha-1'}) and (\ref{es-3}) and returning to (\ref{decompose}), we conclude (\ref{wa2 est}) as claimed. \end{proof}

\begin{lemma}\label{exponential2'}
Under the same notation and hypotheses of Lemma \ref{exponential2} and Proposition \ref{exponential3}, there exists an explicitable function $\kappa_4(\eta)>0$ and a constant $C=C(\alpha, \gamma, T)$ such that
\begin{equation}\label{ca est}
\epsilon\log\mathbb{E}\Big(\exp\Big\{\frac{\kappa_3(\eta)}{\epsilon}[\EDM{U}]_{\dot{C}^{\alpha}([0,T];H^{-\gamma}(\mathbb{T}^3))}\Big\}\Big)\leq C+z+\epsilon\|P_mA^{1/2}\sqrt{Q_\delta}\|_\mathrm{HS}^2.
\end{equation}
The same holds for the stochastic Leray solution $\ED{U}$, still in the notation of and under all hypotheses in Proposition \ref{exponential3}.
\end{lemma}

\begin{proof}[Sketch Proof] We follow the same ideas as the proof of Lemma \ref{exponential2} above, arguing only the case for $\EDM{U}$. We start from the analogue of (\ref{decompose}), to decompose into the same 4 terms $\EDM{\mathcal{T}}(k), 1\le k\le 4$, now measuring the time continuity in the H\"older rather than Sobolev sense. Of these, $\EDM{\mathcal{T}}(1)$ vanishes, because there is no time dependence in the initial data. For the contributions from the dissipation and convection terms, one estimates, for all $\varphi \in H^\gamma(\mathbb{T}^3)$ and all $0\le s<t\le T$, \begin{equation}
	\left|\left\langle \varphi, \int_s^t A\EDM{U}(r)dr\right\rangle \right| \le \|\varphi\|_{H^1(\mathbb{T}^3)}(t-s)^{1/2}\|\EDM{U}\|_{L^2([0,T],V)} \end{equation} and using the Sobolev embedding $H^\gamma(\mathbb{T}^3)\hookrightarrow L^\infty(\mathbb{T}^3)$, for some absolute constant $c$, \begin{equation}
		\left|\left\langle \varphi, \int_s^t P_m(\EDM{U}\cdot \nabla)\EDM{U}(r)dr\right\rangle \right| \le c\|\varphi\|_{H^\gamma(\mathbb{T}^3)}(t-s)^{1/2} \left(\|\EDM{U}\|_{L^\infty([0,T],H)}^2+\|\EDM{U}\|_{L^2([0,T],V)}^2\right).
	\end{equation} Using the duality of $H^{\gamma}(\mathbb{T}^3), H^{-\gamma}(\mathbb{T}^3)$ and that $\alpha<\frac12$, these imply bounds on $\EDM{\mathcal{T}}(2), \EDM{\mathcal{T}}(3)$ using Lemma \ref{exponential1}, up to absorbing constants by making $\kappa_4=\kappa_4(\eta)$ smaller. For the new definition of $\EDM{\mathcal{T}}(4)$, we observe that $\nabla \cdot W_\delta$ defines a Gaussian random variable on $C^{\alpha}([0,T],H^{-\gamma}(\mathbb{T}^3))$ for the values $\alpha \in (0,\frac12), \gamma>\frac52$ imposed. On this space, the H\"older seminorm is induced by a countable set of linear functionals of unit norm: $$ [u]_{\dot{C}^\alpha([0,T],H^{-\gamma}(\mathbb{T}^3))}=\sup\left(\frac{\langle \varphi_n, u(t_m)-u(s_m)\rangle}{|t_m-s_m|^{\alpha}}: m, n\ge 1 \right)$$
	where $\{\varphi_n, n\ge 1\}$ is a countable dense subset of $H^{\gamma}(\mathbb{T}^3)$ and $(t_m, s_m)$ is an enumeration of distinct rational times $0\le s_m<t_m\le T$. In particular, we may apply Fernique's theorem as in \cite[Example 3, Theorem 1]{CIS} to obtain, for some absolute constant $c$, \begin{equation}
		\mathbb{E}\left[\exp\left(c[\nabla\cdot W]^2_{\dot{C}^\alpha([0,T],H^{-\gamma}(\mathbb{T}^3))}\right)\right]<\infty
	\end{equation} which implies an estimate on $\EDM{\mathcal{T}}(4)$ by the same argument as before.
\end{proof}

\section{Existence of Stochastic Leray Solutions}\label{sec: exist}
In this section, we will prove Proposition \ref{prop: main existence}, with the definitions we have made precise in Definition \ref{weak-solution}. We will use a tightness argument in the space $\XX$, similar to \cite{FG95}, using the estimates already established in the previous section.
\begin{proposition}\label{existence}
Let $\epsilon, \delta>0$. For each $\epsilon, \delta>0$, let ${\ED{\tilde{U}}}(0)$ be an $H$-valued random variable satisfying, for some constant $\eta>0$,  \begin{equation}\label{eq: initial data existence}
	 \mathbb{E} \left[\exp\left(\eta\|\ED{\tilde{U}}(0)\|_H^2/\epsilon\right)\right]<\infty.
\end{equation} Then, on a new stochastic basis $(\Omega, \mathcal{F}, (\mathcal{F}_t)_{t\ge 0}, \mathbb{P})$ equipped with a divergence-free white noise $W$, there exists a stochastic Leray solution $(\ED{U}(t))_{t\ge 0}$ to (\ref{SNS-1}) such that $\ED{U}(0)$ has the law of $\ED{\tilde{U}}(0)$ and is independent of $W$.
\end{proposition}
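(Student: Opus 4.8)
The proof follows the classical Galerkin--compactness scheme (as in \cite{FG95}), the one new point being that we must carry the \emph{trajectorial} energy inequality (\ref{eq: TEI}) through to the limit; the plan is to do this by passing to the limit in the energy \emph{equality} (\ref{m-energy}) for the Galerkin system, rather than by applying It\^o's formula to the low-regularity limit. First, fix $\epsilon,\delta>0$ and, on some stochastic basis, realise $\ED{\tilde U}(0)$ together with an independent divergence-free white noise $W$. Since $\|P_mu\|_H\le\|u\|_H$, the data $\EDM{U}(0):=P_m\ED{\tilde U}(0)$ inherit (\ref{eq: initial data existence}) uniformly in $m$ with the same $\eta$, so Lemma~\ref{lemma: properties of galerkin} gives global strong solutions $\EDM{U}$ of (\ref{project-m}), and Corollary~\ref{ET'}(i) shows that their laws $\EDM{\mu}$ on $\XX$ are tight, with $\sup_m\mathbb E(\sup_{t\le T}\|\EDM{U}(t)\|_H^{2p})<\infty$ for all $p\ge1$. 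Viewing $(\EDM{U},(\beta^{\zeta,i})_{\zeta\in\mathcal B,1\le i\le3})$ as a random variable in $\XX\times C([0,T],\mathbb R^{\mathcal B\times3})$ --- the second factor being trivially tight --- Prokhorov's theorem and the Skorokhod representation theorem furnish, along a subsequence (not relabelled), a new probability space carrying copies $(\EDM{U},(\beta^{\zeta,i}))$ with unchanged joint laws and converging almost surely to a limit $(\ED{U},(\beta^{\zeta,i}))$; the limiting $(\beta^{\zeta,i})$ are again independent Brownian motions, hence assemble via (\ref{eq: Wi}) into a divergence-free white noise $W$, the law of $\ED{U}(0)$ is that of $\ED{\tilde U}(0)$ (as the limit in law of $P_m\ED{\tilde U}(0)$), and $\ED{U}(0)$ is independent of $W$ since the product structure of the joint laws of $(\EDM{U}(0),(\beta^{\zeta,i}))$ passes to the limit. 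Equipping this space with the usual augmentation of $\sigma(\ED{U}(s),W(s):s\le t)$, equality of laws guarantees that each copy $\EDM{U}$ still solves (\ref{project-m}), and the usual argument then shows $W$ is an $\{\mathcal F_t\}$-white noise and $\ED{U}$ is adapted.

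Next I would pass to the limit in the weak formulation (\ref{weaksense}). For a fixed $\varphi\in C^{\infty}_\mathrm{d.f.}([0,T]\times\mathbb T^3)$, the linear and initial terms pass by the a.s.\ convergence in $\XX$ (weak-in-$H$ convergence at each time, plus weak convergence of $\nabla\EDM{U}$ in $L^2([0,T],\mathcal M)$); the noise term $\sqrt\epsilon\,W_t(P_m\sqrt{Q_\delta}\nabla\varphi)\to\sqrt\epsilon\,W_t(\sqrt{Q_\delta}\nabla\varphi)$ because the integrand is deterministic and converges in $L^2$ while the Brownian motions converge; and writing $\langle(\EDM{U}\cdot\nabla)\EDM{U},\varphi\rangle=-\langle\EDM{U}\otimes\EDM{U},\nabla\varphi\rangle$, the nonlinear term converges because $\EDM{U}\to\ED{U}$ strongly in $L^2([0,T],H)$ (the Aubin--Lions compactness built into the functional $F$ of (\ref{eq: prototype mathcalf})), while interpolating the $L^\infty_tH$ and $L^2_tV$ bounds in (\ref{eq: FET 2}) yields a uniform bound on $\EDM{U}$ in $L^{10/3}([0,T]\times\mathbb T^3)$, so $\EDM{U}\otimes\EDM{U}\to\ED{U}\otimes\ED{U}$ in $L^1$. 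Hence $\ED{U}$ is a weak solution of (\ref{SNS-1}) in the sense of (\ref{weaksense}); in particular $\ED{U}\in L^2([0,T],V)$ a.s.\ and is adapted, so the It\^o integral $N(t):=-\sqrt\epsilon\int_0^t\langle\ED{U},\nabla\cdot dW_\delta\rangle$ is well defined.

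It remains to propagate (\ref{eq: TEI}). Let $N_m(t):=-\sqrt\epsilon\int_0^t\langle\EDM{U},\nabla\cdot dW_\delta\rangle$ be the martingale term appearing in the Galerkin energy equality (\ref{m-energy}). Expanding in the Stokes basis, $N_m$ is a finite sum of scalar It\^o integrals whose integrands $\sigma_{\delta,\zeta}\langle\EDM{U},\partial_ie_\zeta\rangle$ converge strongly in $L^2([0,T])$ (since $\EDM{U}\to\ED{U}$ in $L^2([0,T],H)$ and $e_\zeta$ is smooth), while the driving Brownian motions converge uniformly and $\langle N_m\rangle_t\le\epsilon\|\EDM{U}\|^2_{L^2([0,T],V)}$ is uniformly controlled by (\ref{eq: FET 2}); a standard stochastic-integral convergence lemma of Kurtz--Protter type then gives $N_m\to N$ in probability in $C([0,T])$. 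Passing to a further a.s.\ subsequence and using weak lower semicontinuity of $v\mapsto\|v(t)\|_H$ and $v\mapsto\int_0^t\|v(s)\|_V^2\,ds$, together with $\tfrac\epsilon2\|P_mA^{1/2}\sqrt{Q_\delta}\|_{\mathrm{HS}}^2\nearrow\tfrac\epsilon2\|A^{1/2}\sqrt{Q_\delta}\|_{\mathrm{HS}}^2<\infty$ (finite for $\delta>0$), we obtain, for every $t\in[0,T]$,
\begin{align*}
\tfrac12\|\ED{U}(t)\|_H^2+\int_0^t\|\ED{U}(s)\|_V^2\,ds-N(t)
&\le\liminf_m\Big(\tfrac12\|\EDM{U}(t)\|_H^2+\int_0^t\|\EDM{U}(s)\|_V^2\,ds-N_m(t)\Big)\\
&=\lim_m\Big(\tfrac12\|\EDM{U}(0)\|_H^2+\tfrac\epsilon2\|P_mA^{1/2}\sqrt{Q_\delta}\|_{\mathrm{HS}}^2 t\Big)\\
&=\tfrac12\|\ED{U}(0)\|_H^2+\tfrac\epsilon2\|A^{1/2}\sqrt{Q_\delta}\|_{\mathrm{HS}}^2 t,
\end{align*}
the first equality being (\ref{m-energy}). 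This is exactly (\ref{eq: TEI}), and since $\ED{U}\in C([0,T],(H,\mathrm w))$ it holds simultaneously for all $t$, almost surely. Together with the previous paragraph, $\ED{U}$ is a stochastic Leray solution with the prescribed initial law, independent of $W$.

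The main obstacle is the convergence $N_m\to N$ of the stochastic integrals: one has only \emph{weak} convergence of $\nabla\EDM{U}$, so the passage requires either a careful setup of a single filtration with respect to which every approximating white noise remains a white noise (in order to legitimately invoke a Kurtz--Protter-type theorem) or, equivalently, an identification of the limit through the martingale problem. A second, more conceptual point is that one must ensure it is the energy \emph{inequality}, with the correct sign, that survives the limit --- which is precisely why arguing from the Galerkin energy equality (and using lower semicontinuity on the ``good'' terms $\|\cdot(t)\|_H^2$ and $\int_0^t\|\cdot\|_V^2$), rather than attempting to apply It\^o's formula to the low-regularity limit, is the right route.
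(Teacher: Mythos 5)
Your overall strategy is the same as the paper's: Galerkin approximation, tightness from the energy-based estimates of Corollary \ref{ET'}, Prokhorov--Skorokhod, identification of the weak formulation, and then passing to the limit in the Galerkin energy \emph{equality} (\ref{m-energy}) using lower semicontinuity on the good terms rather than applying It\^o's formula to the limit. There is, however, one concrete gap in the energy step: in your final display you assert $\lim_m\tfrac12\|\EDM{U}(0)\|_H^2=\tfrac12\|\ED{U}(0)\|_H^2$ on the Skorokhod space. You performed the tightness only in $\XX\times C([0,T],\mathbb{R}^{\mathcal B\times 3})$, and convergence in $\XX$ gives only \emph{weak} convergence $\EDM{U}(0)\rightharpoonup\ED{U}(0)$ in $H$; lower semicontinuity then yields $\liminf_m\|\EDM{U}(0)\|_H^2\ge\|\ED{U}(0)\|_H^2$, which is the wrong direction because this term sits on the right-hand side of the energy inequality. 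The strong convergence $P_m\ED{\tilde U}(0)\to\ED{\tilde U}(0)$ on the original space is not automatically inherited by the Skorokhod copies unless it is encoded in the topology used for tightness. This is exactly why the paper works in the strengthened space $\XX^+$, whose metric adds $\|u(0)-v(0)\|_H^2$, so that norm convergence of the initial data survives the representation theorem; you could equally add the initial datum, in the norm topology of $H$, as an extra coordinate. As written, the step fails; with this modification it goes through.

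The second weak point is the convergence $N_m\to N$ of the stochastic integrals, which you yourself flag as the main obstacle and leave to a ``Kurtz--Protter-type'' lemma. Note that after Skorokhod representation both the integrands and the integrators change with $m$ (each $N_m$ is driven by the copy $\beta^m$), so the filtration compatibility and uniform-tightness hypotheses of such a theorem are precisely what must be verified, and your sketch does not do this. The paper sidesteps the issue by carrying the energy martingale $\tilde M^m$ as an additional tight coordinate in $C([0,T])$ (tightness via BDG/Kolmogorov moment bounds), so that a continuous limit $M$ exists by construction, and then identifying $M$ with $-\sqrt{\epsilon}\int_0^t\langle\ED{U},\nabla\cdot dW_\delta\rangle$ through a martingale-problem computation: one passes to the limit in the martingale property, in the quadratic variation $\varepsilon\int_0^t\|\sqrt{Q_\delta}\nabla\ED{U}\|_{\mathcal M}^2\,dr$ (using the strong $L^2_tH$ convergence and boundedness of $\sqrt{Q_\delta}\nabla$ for fixed $\delta>0$), and in the cross-variations with the limiting Brownian motions, concluding from the vanishing of the $L^2$-distance between $M(t)$ and the stochastic integral. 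Your parenthetical alternative (``identification of the limit through the martingale problem'') is in fact the paper's route; to make your proof complete you should either carry it out or verify the hypotheses of the convergence theorem you invoke.
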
 \begin{proof} On the same stochastic basis as $\ED{\tilde{U}}(0)$, let $\EDM{\tilde{U}}(0):=P_m \ED{\tilde{U}}(0)$, and extending the stochastic basis if necessary, let $\tilde{W}$ be a divergence-free white noise, independently of $\ED{\tilde{U}}(0)$, and let $\EDM{\tilde{U}}$ be the solutions to (\ref{eq: galerkin approx}) starting from $\EDM{\tilde{U}}(0)$ and driven by $\tilde{W}$.  The argument will proceed using the usual argument of compactness and identification of limits. We consider the tuples $(\EDM{\tilde{U}}, \tilde\beta, \tilde M^m)$ of solutions to (\ref{eq: galerkin approx}), together with the Wiener process $\tilde \beta=\{\tilde{\beta}^{\zeta, i}: \zeta\in \mathcal{B}, 1\le i\le 3\}$ corresponding to the driving white noise $\tilde W$, and martingale $\tilde M^m$ appearing in the energy inequality \begin{equation}
	\tilde{M}^m(t)=-\int^t_0\sqrt{\epsilon}\langle \EDM{\tilde{U}},\nabla\cdot d\tilde{W}_{\delta}(s)\rangle.
\end{equation}Using Corollary \ref{ET'}i), the construction of $\EDM{\tilde{U}}(0)$ and using Burkholder-Davis-Gundy together with (\ref{eq: integrability tightness}), (\ref{gainonederivative}), the tuples $(\EDM{\tilde{U}}, \tilde{\beta},\tilde{M}^m)$ are tight in the space $\XX^+\times C([0,T],\mathbb{R}^{\mathcal{B}\times \{1,2,3\}})\times C([0,T])$ which justifies moving to a subsequence, without relabelling, converging in distribution relative to this topology. We use Skorokhod's representation theorem to find a new probability space $(\Omega, \mathcal{F}, (\mathcal{F}(t))_{t\ge 0}, \mathbb{P})$ and realisations $(\EDM{U}, \beta^m, M^m)$ which converge almost surely to a limit $(\ED{U}, \beta, M)$ and, using same estimates for $M^m$ as in tightness, one can also gain convergence $M^m\to M$ in the norm of $L^p(\mathbb{P};C([0,T]))$ for any $1\le p<\infty$. \\ \\  Using the L\'evy characterisation,  $\beta$ is a collection of independent Brownian motions, and it is standard to identify $\ED{U}$ as a weak solution to (\ref{SNS-1}) in the sense of equation (\ref{weaksense}), see \cite{FG95}. To conclude, we only need to show the trajectorial energy inequality. To ease notation, we will not reiterate the dependence of constants on the (fixed) parameters $\epsilon, \delta$ in the sequel. \\ \\ We start from the approximate energy equality (\ref{m-energy}) for $\EDM{{U}}$, which implies that for every $\psi\in L^\infty(\mathbb{P};L^\infty([0,T],\mathbb{R}_+))$,
\begin{align}\label{Itoineq}
{\mathbb{E}}\Big[\int_0^T \psi(t) \Big(&\frac{1}{2}\|\EDM{U}(t)\|_H^2-\frac{1}{2}\|\EDM{U}(0)\|_H^2+\int_0^t\| \EDM{U}(s)\|_V^2ds\notag\\
&+\int^t_0\sqrt{\epsilon}\langle \EDM{U},\nabla\cdot dW^{m}_\delta(s)\rangle-\frac{\epsilon}{2}\|P_m\circ A^{1/2}\circ\sqrt{Q_{\delta}}\|_{HS}^2t\Big)dt\Big]=0
\end{align}
where $W^m$ is the white noise associated to the Wiener processes $\beta^m$. We also recall that the stochastic integral appearing in the second line is nothing other than $M^m(t)$. We now take the limit $m\rightarrow\infty$ along the subsequence of all terms aside from the stochastic integral. The negative term depending on the initial data is dealt with using (\ref{eq: initial moment}) and the convergence in $\XX^+$, while limits of all other terms can be taken using the lower semicontinuity of Sobolev norms relative to the topology of $\XX^+$, the nonnegativity of $\psi(t)$ and Fatou's lemma

\begin{align}\label{Itoineq'}
\mathbb{E}\Big[\int_0^T \psi(t) \Big(&\frac{1}{2}\|\ED{U}(t)\|_H^2-\frac{1}{2}\|\ED{U}(0)\|_H^2+\int_0^t\|\ED{U}(s)\|_V^2ds+M(t)-\frac{\epsilon}{2}\|A^{1/2}\circ\sqrt{Q_{\delta}}\|_{HS}^2t\Big)dt\Big]\le 0.\end{align} Since $\psi\in L^\infty(\mathbb{P}, L^\infty([0,T],\mathbb{R}_+))$ was arbitrary, the factor in parenthesis is almost surely nonpositive for almost all times: $\mathbb{P}$-almost surely, there exists a set $\mathfrak{s}\subset[0,T]$ of full $dt$-measure, such that, for all $t\in  \mathfrak{s}$, \begin{equation}\label{eq: pwenergy}
	\frac{1}{2}\|\ED{U}(t)\|_H^2\le \frac{1}{2}\|\ED{U}(0)\|_H^2-\int_0^t\|\ED{U}(s)\|_V^2ds-M(t)+\frac{\epsilon}{2}\|A^{1/2}\circ\sqrt{Q_{\delta}}\|_{HS}^2t.
\end{equation}  We now extend this to all $t\in [0,T]$; to simplify notation, let $E^+(t)$ be the random function which appears on the right-hand side of (\ref{eq: pwenergy}).   Since $\mathfrak{s}$ is dense in $[0,T]$, we can (almost surely) construct, in a $\mathcal{F}$-measurable way, $\theta_n:[0,T]\to \mathfrak{s}$ such that $|\theta_n(t)-t|<\frac1n$ for all $t$. From (\ref{eq: pwenergy}), it holds that, almost surely \begin{equation}\label{eq: as small time change} \frac12\|\ED{U}(\theta_n(t))\|^2_H \le E^+(\theta_n(t)) \qquad \text{for all }0\le t\le T.   \end{equation} We now take $n\to \infty$. On the left-hand side, the fact that $\ED{U}$ takes values in $\XX$ implies that, almost surely, for all $t$, $\ED{U}(\theta_n(t))\to \ED{U}(t)$ weakly in $H$, and on the same almost sure event, $$\frac12\|\ED{U}(t)\|_H^2 \le \liminf_n \frac12 \|\ED{U}(\theta_n(t))\|_H^2.$$ On the other hand, $E^+$ is continuous because $M$ is, and so, almost surely, $E^+(\theta_n(t))\to E^+(t)$ uniformly in $t\in [0,T]$. We may therefore take the limits of both sides of (\ref{eq: as small time change}) as $n\to \infty$ to see that, $\PP$-almost surely, (\ref{eq: pwenergy}) holds for all times $0\le t\le T$, as desired. \medskip \\
To conclude the trajectorial energy inequality (\ref{eq: TEI}), we now identify $M(t)$ with the stochastic integral $-\sqrt{\epsilon}\int_0^t \langle \ED{U}(r), \nabla \cdot dW_\delta(r)\rangle $. First, since each $M^m$ is a martingale with respect to the complete, adapted filtration $\{{\mathcal{F}}^m(t)\}_{t\ge 0}$ generated by $\EDM{U},M^m, {\beta}^m$, it follows that for any $0\le s\le t$ and bounded, continuous $F: L^2([0,s],L^2(\mathbb{T}^3))\times C([0,s])\times C([0,s],\mathbb{R}^\mathcal{B})\to\mathbb{R}$, $$ {\mathbb{E}}\left[F\left(\left.\EDM{U}\right|_{[0,s]},\left.M^m\right|_{[0,s]},\left.{\beta}^m\right|_{[0,s]}\right)(M^m(t)-M^m(s))\right]=0.$$ Taking limits using the almost sure convergence and the boundedness and continuity of $F$  shows that the same holds for $\ED{U}, M, {\beta}$, which implies that $M$ is a martingale with respect to the complete and adapted filtration $\{{\mathcal{F}}(t)\}_{t\ge 0}$ generated by $\ED{U},M, {\beta}$. Similarly, $$ {\mathbb{E}}\left[F\left(\left.\EDM{U}\right|_{[0,s]},\left.M^m\right|_{[0,s]},\left.{\beta}^m\right|_{[0,s]}\right)\left(M^m(t)^2-M^m(s)^2-\epsilon\int_s^t\|\sqrt{Q_\delta}\nabla \EDM{U}(r) dr\|_\mathcal{M}^2\right)\right]=0.$$ Thanks to (\ref{gainonederivative}) and the definition of $\XX$,  $\|\sqrt{Q_\delta}\nabla \EDM{U}\|_\mathcal{M} \to \|\sqrt{Q_\delta}\nabla \EDM{U}\|_\mathcal{M}$, ${\mathbb{P}}$-almost surely in $L^2([0,T])$, and thanks to the integrability implied by Lemma \ref{exponential1}, the same convergence holds in $L^2({\mathbb{P}}, L^2([0,T]))$. Taking the limit of the other terms, we conclude that \begin{equation}\label{mar-1}
{\mathbb{E}}\Big[F\left(\left.\ED{U}\right|_{[0,s]},\left.M\right|_{[0,s]},\left.{\beta}\right|_{[0,s]}\right)\Big(M(t)^2-M(s)^2-\epsilon\int^t_s\|\sqrt{Q_{\delta}}\nabla \ED{U}(r)\|_\mathcal{M}^2dr\Big)\Big]=0
\end{equation} which implies that \begin{equation}\label{eq: QV1} M(t)^2-\epsilon\int_0^t\|\sqrt{Q_{\delta}}\nabla \ED{U}\|_\mathcal{M}^2dr\end{equation} is a mean-0 continuous martingale with respect to ${\mathcal{F}}(\cdot)$. Similarly, the covariation of $M$ against the Brownian motion ${\beta}$ is \begin{equation}
	d\left\langle M,{\beta}^{\zeta,i}\right\rangle (t)=\epsilon \langle \sqrt{Q_\delta} \partial_i \ED{U}, e_\zeta\rangle^2 dt.
\end{equation}  For a sequence of previsible random functions $\alpha_{\zeta,i}(t)$ satisfying $\mathbb{E}[\sum_{\zeta\in \mathcal{B}, 1\le i\le 3}\int_0^T \alpha_{\zeta,i}^2(t)dt]<\infty$, we may sum the previous display to see that \begin{equation}
	\label{eq: covariation of sum} M(t)\left(\sum_{\zeta\in \mathcal{B}, 1\le i\le 3} \int_0^t \alpha_{\zeta,i}(s)d{\beta}^{\zeta,i}_s\right)-\epsilon\int_0^t
	\sum_{\zeta \in \mathcal{B}, 1\le i\le 3} \langle \sqrt{Q_\delta}\partial_i \ED{U}(s), e_\zeta\rangle \alpha_{\zeta,i}(s)ds
\end{equation}
is a continuous, mean-0 martingale with respect to $\mathcal{F}(\cdot)$. In particular, using Parseval's identity, we may take $\alpha_{\zeta,i}(t)=\langle \partial_i \ED{U}(t), \sqrt{Q_\delta}e_\zeta\rangle$ to see that the conclusion applies to \begin{equation}\begin{split}
	\label{eq: covariation of sum'} & M(t)\int_0^t \langle \nabla \ED{U}, dW_\delta(r)\rangle -\epsilon\int_0^t \sum_{\zeta \in \mathcal{B}, 1\le i\le 3}  \langle \partial_i \ED{U}(s), \sqrt{Q_\delta}e_\zeta\rangle^2 ds\\& \hspace{1cm}=M(t)\int_0^t \langle \nabla \ED{U}, dW_\delta(r)\rangle -\epsilon\int_0^t\|\sqrt{Q_\delta}\nabla \ED{U}(s)\|_\mathcal{M}^2 ds.
\end{split}\end{equation} By a simple computation, the same conclusion also holds for \begin{equation}\label{eq: QV2}
	\left(\int_0^t \langle \nabla \ED{U}, dW_\delta(r)\rangle\right)^2 -\int_0^t \|\sqrt{Q_\delta}\nabla \ED{U}\|_\mathcal{M}^2 dr.
\end{equation} Gathering (\ref{eq: QV1}, \ref{eq: covariation of sum'}, \ref{eq: QV2}), we find
\begin{equation*}
{\mathbb{E}}\Big(M(t)-\sqrt{\epsilon}\int^t_0\langle\nabla \ED{U}(r), dW_\delta(r)\rangle\Big)^2=0,
\end{equation*}
which implies that, for any $t\ge 0$, $\mathbb{P}$-almost surely, $$M(t)=-\sqrt{\epsilon}\int^t_0\langle \ED{U}(r),\nabla\cdot dW_{\delta}(r)\rangle = -\sqrt{\epsilon}\int^t_0\langle \ED{U}(r),\sqrt{Q_\delta}\nabla\cdot dW(r)\rangle$$ as desired; the conclusion extends to holding almost surely for all times $0\le t\le T$ by continuity. Returning to (\ref{eq: pwenergy}), we have proven that $(\ED{U}, {W})$ satisfies the trajectorial energy estimate (\ref{eq: TEI}), and all of the conditions of Definition \ref{weak-solution} are satisfied.
\end{proof}
\section{Properties of the rate function}\label{sec-5}
We now give some properties of the rate function $\mathcal{I},\mathcal{J}$ defined in Section \ref{sec-1}. \begin{remark}\label{rmk: optimal g} For any $u\in \XX$ for which $\mathcal{J}(u)<\infty$, there exists a unique $g$ such that (\ref{control}) holds and which attains the infimum in (\ref{dynamic}). The optimal $g$ is uniquely characterised by the property that $\int_{\mathbb{T}^3} g(t,x)dx=0$ for almost all $t\in [0,T]$. \end{remark} This can be shown by elementary Hilbert space considerations; see \cite[Lemma 3.5]{GH} for a similar argument. As is common in large deviations theory, we will show that the rate function $\mathcal{I}$ can be expressed as a dual formulation. Let us define $C^\infty_{\rm d.f.}([0,T]\times \mathbb{T}^3)$ to be the space of all smooth functions $\varphi: [0,T]\times \mathbb{T}^3\to \mathbb{R}^3$ such that ${\rm{div}}(\varphi)=0$ for all $t,x$. 
\begin{lemma}\label{ske-lem}
The rate functions $\mathcal{I}, \mathcal{J}$ defined by (\ref{rate-3-1}, \ref{dynamic}) have the representation, for all $u\in \XX$,
\begin{equation}\label{eq: variational form J}
\mathcal{J}(u):=\sup_{\varphi\in C^{\infty}_\mathrm{d.f.}([0,T]\times\mathbb{T}^3)}\Big\{ \Lambda(\varphi,u)\Big\};
\end{equation}
\begin{equation}\label{eq: variational form}
\mathcal{I}(u):=\sup_{\varphi\in C^{\infty}_\mathrm{d.f.}([0,T]\times\mathbb{T}^3), \psi\in C_b(H,w)}\Big\{ \Lambda_0(\psi,u(0))+\Lambda(\varphi,u)\Big\},
\end{equation} where  $\Lambda(\cdot, u):C^{\infty}_\mathrm{d.f.}([0,T]\times\mathbb{T}^3)\rightarrow\mathbb{R}$ and $\Lambda_0(\cdot, u(0)):C_b(H,w)\rightarrow\mathbb{R}$ are defined by
\begin{align}\label{Fu}
\Lambda(\varphi,u):=\langle  u(T),\varphi(T)\rangle-\langle u(0),\varphi(0)\rangle &- \int_0^T
\int_{\mathbb{T}^3} (\partial_t \varphi\cdot u -\nabla \varphi \cdot \nabla u- \varphi\cdot((u\cdot \nabla u)))dtdx \\ & \hspace{1cm}-\frac12\|\nabla \varphi\|_{L^2([0,T],\mathcal{M})}^2,\notag
\end{align}
for any $\varphi\in C^{\infty}_\mathrm{d.f.}([0,T]\times\mathbb{T}^3)$, and for any $\psi\in C_b(H,w)$,
\begin{align}\label{eq: Lambda0}
\Lambda_0(\psi,u(0)):=\psi(u(0)) - \mathcal{I}_0^\star(\psi);\qquad \mathcal{I}_0^\star(\psi):=\sup_{v\in H}\left\{\psi(v)-\mathcal{I}_0(v)\right\}
\end{align} where $\mathcal{I}_0$ is the static large deviations functional in Assumption \ref{hyp: initial data}.

\end{lemma}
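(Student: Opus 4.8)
The plan is to deduce both identities from convex duality, treating the dynamic part $\mathcal{J}$ and the static part $\mathcal{I}_0$ separately. First I would record the elementary algebraic fact that, for every $\varphi\in C^{\infty}_\mathrm{d.f.}([0,T]\times\mathbb{T}^3)$ and every $u\in\XX$, the functional $\Lambda(\varphi,u)$ of (\ref{Fu}) can be written as
\begin{equation*}
\Lambda(\varphi,u)=\ell_u(\varphi)-\tfrac12\|\nabla\varphi\|_{L^2([0,T],\mathcal{M})}^2,
\end{equation*}
where $\ell_u(\varphi):=\langle u(T),\varphi(T)\rangle-\langle u(0),\varphi(0)\rangle-\int_0^T\!\!\int_{\mathbb{T}^3}(\partial_t\varphi\cdot u-\nabla\varphi\cdot\nabla u-\varphi\cdot((u\cdot\nabla)u))\,dt\,dx$ is \emph{linear} in $\varphi$ and well defined on all of $\XX$ (using $u\in L^\infty_tH\cap L^2_tV\cap C([0,T],(H,\mathrm{w}))$ to make sense of each term). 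Comparing with the weak formulation of the skeleton equation (\ref{control}) integrated in time against $\varphi$ shows that $u$ solves (\ref{control}) for a control $g\in L^2([0,T],\mathcal{M})$ precisely when $\ell_u(\varphi)=\langle g,\nabla\varphi\rangle_{L^2([0,T],\mathcal{M})}$ for all test functions $\varphi$; thus the set of admissible controls is an affine subspace of $L^2([0,T],\mathcal{M})$, possibly empty.

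The heart of the argument is then a Hilbert-space computation. Let $\mathcal{K}\subset L^2([0,T],\mathcal{M})$ be the closure of $\{\nabla\varphi:\varphi\in C^{\infty}_\mathrm{d.f.}([0,T]\times\mathbb{T}^3)\}$. If the control set is nonempty, then $\ell_u$ is bounded relative to the seminorm $\varphi\mapsto\|\nabla\varphi\|_{L^2([0,T],\mathcal{M})}$, hence factors through $\varphi\mapsto\nabla\varphi$ and extends continuously to $\mathcal{K}$; by Riesz representation there is a unique $g_\star\in\mathcal{K}$ with $\ell_u(\varphi)=\langle g_\star,\nabla\varphi\rangle$ for all $\varphi$. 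Any other control $g$ satisfies $g-g_\star\perp\mathcal{K}$, so $\|g\|^2=\|g_\star\|^2+\|g-g_\star\|^2$; this identifies $g_\star$ with the optimal control of Remark \ref{rmk: optimal g} and gives $\mathcal{J}(u)=\tfrac12\|g_\star\|^2$. Completing the square, and using density of $\{\nabla\varphi\}$ in $\mathcal{K}$ together with $g_\star\in\mathcal{K}$, we obtain $\sup_\varphi\Lambda(\varphi,u)=\sup_{h\in\mathcal{K}}(\langle g_\star,h\rangle-\tfrac12\|h\|^2)=\tfrac12\|g_\star\|^2=\mathcal{J}(u)$. If instead no control exists, then $\ell_u$ must be unbounded relative to $\|\nabla\varphi\|$ — otherwise its Riesz representative would be a control — so one can pick $\varphi_n$ with $\|\nabla\varphi_n\|\le1$ and $\ell_u(\varphi_n)\to+\infty$ (adjusting the sign), whence $\Lambda(\varphi_n,u)\ge\ell_u(\varphi_n)-\tfrac12\to+\infty$, and both $\sup_\varphi\Lambda(\varphi,u)$ and $\mathcal{J}(u)$ equal $+\infty$. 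The only point needing care is the spatial means: constant test functions span the kernel of $\varphi\mapsto\nabla\varphi$, and $\ell_u$ vanishes on them iff $\int_{\mathbb{T}^3}u(T)=\int_{\mathbb{T}^3}u(0)$, which is exactly the condition under which controls exist at the level of the mean; this case is subsumed into the dichotomy above. This proves (\ref{eq: variational form J}).

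For (\ref{eq: variational form}) I would use $\mathcal{I}(u)=\mathcal{I}_0(u(0))+\mathcal{J}(u)$ and observe that the right-hand side of (\ref{eq: variational form}) splits as $\sup_{\psi\in C_b(H,\mathrm{w})}\Lambda_0(\psi,u(0))+\sup_\varphi\Lambda(\varphi,u)$, since the two suprema run over independent variables. The second supremum equals $\mathcal{J}(u)$ by the previous step, so it remains to show $\mathcal{I}_0(v)=\sup_{\psi\in C_b(H,\mathrm{w})}(\psi(v)-\mathcal{I}_0^\star(\psi))$ with $\mathcal{I}_0^\star$ as in (\ref{eq: Lambda0}). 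This is the biconjugation of a nonnegative, not-identically-infinite, weakly lower semicontinuous function on the completely regular space $(H,\mathrm{w})$: by Assumption \ref{hyp: initial data}, $\mathcal{I}_0$ is a rate function for an LDP in the weak topology of $H$ and hence weakly lsc, so it is the pointwise supremum of the functions of $C_b(H,\mathrm{w})$ lying below it; writing any such function as $\psi-\mathcal{I}_0^\star(\psi)$ (which is $\le\mathcal{I}_0$ by construction, while conversely every $g\in C_b$ with $g\le\mathcal{I}_0$ is dominated by $g-\mathcal{I}_0^\star(g)$) produces exactly the stated identity. Combining the two pieces gives (\ref{eq: variational form}).

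I expect the main obstacle to be the two Hilbert-space steps for $\mathcal{J}$: correctly identifying the closed subspace $\mathcal{K}$ and verifying that the infimum in (\ref{dynamic}) is attained at its element $g_\star$ — that is, reconciling the weak formulation of (\ref{control}), the restriction to divergence-free test functions, and the minimal-norm (mean-zero) characterisation of the optimal control in Remark \ref{rmk: optimal g}, all while checking the domain issues that make $\ell_u$ a genuine continuous linear functional on the relevant space. The $\mathcal{I}_0$ part is routine once one notes that a weak-topology rate function is automatically weakly lsc, so no convexity of $\mathcal{I}_0$ is needed.
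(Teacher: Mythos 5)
Your proof is correct and follows essentially the same route as the paper: your Hilbert space $\mathcal{K}=\overline{\{\nabla\varphi\}}\subset L^2([0,T],\mathcal{M})$ is isometric to the paper's completion $\mathcal{D}^1$, and your Riesz-representation/completing-the-square argument identifying $\sup_\varphi\Lambda(\varphi,u)$ with $\tfrac12\|g_\star\|^2=\mathcal{J}(u)$ is the same duality used there, with your explicit treatment of the kernel (spatial means) and of the no-admissible-control case being a slightly more careful rendering of what the paper leaves implicit. The only real divergence is the static term, where you prove $\mathcal{I}_0(v)=\sup_{\psi\in C_b(H,\mathrm{w})}\left(\psi(v)-\mathcal{I}_0^\star(\psi)\right)$ directly via complete regularity and weak lower semicontinuity of $\mathcal{I}_0$ rather than citing Bryc's lemma as the paper does; this is fine provided, as in the paper's convention, the rate function in Assumption \ref{hyp: initial data} is understood to be lower semicontinuous for the weak topology of $H$.
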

\begin{proof}
The variational form (\ref{eq: variational form J}) can be found in \cite[Equations (8.3), (8.5)]{QY}; see also \cite[Lemma 5.1]{kipnis1989hydrodynamics}, \cite[Section 10]{kipnis1998scaling}. The claim follows on observing that the suprema over $\varphi, \psi$ in (\ref{eq: variational form}) can be taken separately, and by Bryc's lemma, \begin{equation}
	\sup_{\psi \in C_b(H,w)} \Lambda_0(\psi, u(0))= \mathcal{I}_0(u(0))
\end{equation} so that (\ref{eq: variational form}) follows from (\ref{eq: variational form J}).  \end{proof}

The following are easy consequences of the same argument. \begin{lemma}\label{ratelem}
Each functional $\Lambda_0(\psi, u(0)), \Lambda(\varphi, u)$ is continuous with respect to the topology of $\XX$, and the rate functions $\mathcal{I}, \mathcal{J}$ defined by (\ref{rate-3-1}, \ref{dynamic}) are lower semi-continuous with respect to the topology of $\XX$.
\end{lemma}  \begin{lemma}[Convergence of optimal controls]\label{lemma: convergence of g}
	Let $u^{(n)}, u\in \XX$ and suppose that \begin{equation}
		\label{eq: I convergence} u^{(n)}\to u \text{ in the topology of }\XX; \qquad \mathcal{I}(u^{(n)})\to \mathcal{I}(u)<\infty.
	\end{equation} Then it follows that the optimal controls $g^{(n)}$ for $u^{(n)}$ converge strongly in $L^2([0,T],\mathcal{M})$ to the optimal control $g$ for $u$.
\end{lemma}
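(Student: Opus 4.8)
The plan is to argue by weak compactness in the Hilbert space $L^2([0,T],\mathcal{M})$ together with lower semicontinuity of the norm and a matching of limits via the variational characterisation already established. First I would recall from Remark \ref{rmk: optimal g} that for each $n$ the optimal control $g^{(n)}$ is the unique one satisfying (\ref{control}) for $u^{(n)}$ with the normalisation $\int_{\mathbb{T}^3} g^{(n)}(t,x)dx=0$, and that $\mathcal{J}(u^{(n)})=\frac12\|g^{(n)}\|^2_{L^2([0,T],\mathcal{M})}$; likewise $\mathcal{J}(u)=\frac12\|g\|^2_{L^2([0,T],\mathcal{M})}$. From the convergence hypothesis (\ref{eq: I convergence}) and the fact that $\mathcal{I}(u^{(n)})=\mathcal{I}_0(u^{(n)}(0))+\mathcal{J}(u^{(n)})$, together with lower semicontinuity of $\mathcal{I}_0$ (it is in fact continuous in the norm topology by Assumption \ref{hyp: initial data}, and $u^{(n)}(0)\to u(0)$ weakly in $H$ since convergence in $\XX$ includes convergence in $C([0,T],(H,\mathrm w))$), I get $\limsup_n \mathcal{J}(u^{(n)})\le \mathcal{J}(u)<\infty$. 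Actually I should be slightly careful: lower semicontinuity gives $\mathcal{I}_0(u(0))\le\liminf_n\mathcal{I}_0(u^{(n)}(0))$, hence $\limsup_n\mathcal{J}(u^{(n)})\le\mathcal{I}(u)-\mathcal{I}_0(u(0))=\mathcal{J}(u)$. In particular the sequence $(g^{(n)})$ is bounded in $L^2([0,T],\mathcal{M})$.

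Next I would extract a weakly convergent subsequence $g^{(n_k)}\rightharpoonup \bar g$ in $L^2([0,T],\mathcal{M})$. Passing to the limit in the weak form (\ref{eq: weak form}) of the skeleton equation along this subsequence — exactly the term-by-term continuity established in the proof of Lemma \ref{ratelem}, where the dissipative term $\int \nabla u^{(n_k)}:\nabla\varphi$ converges by weak convergence in $L^2_{\mathrm w}([0,T],H\cap V)$, the convective term $\int (u^{(n_k)}\otimes u^{(n_k)}):\nabla\varphi$ converges by strong convergence in $L^2([0,T],H)$, and $\int g^{(n_k)}:\nabla\varphi$ converges by weak convergence of $g^{(n_k)}$ — shows that $u$ solves (\ref{control}) weakly with control $\bar g$; moreover $\int_{\mathbb{T}^3}\bar g(t,x)dx=0$ a.e.\ $t$ is preserved under weak limits, so by the uniqueness in Remark \ref{rmk: optimal g}, $\bar g=g$, the optimal control for $u$. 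By weak lower semicontinuity of the norm, $\|g\|_{L^2([0,T],\mathcal{M})}\le\liminf_k\|g^{(n_k)}\|_{L^2([0,T],\mathcal{M})}$, while from the previous paragraph $\limsup_k\|g^{(n_k)}\|^2_{L^2([0,T],\mathcal{M})}=2\limsup_k\mathcal{J}(u^{(n_k)})\le 2\mathcal{J}(u)=\|g\|^2_{L^2([0,T],\mathcal{M})}$. Hence $\|g^{(n_k)}\|\to\|g\|$, and combined with weak convergence $g^{(n_k)}\rightharpoonup g$ in a Hilbert space this upgrades to strong convergence $g^{(n_k)}\to g$ in $L^2([0,T],\mathcal{M})$.

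Finally I would remove the subsequence: the argument above shows that every subsequence of $(g^{(n)})$ has a further subsequence converging strongly to the \emph{same} limit $g$ (since the limit is characterised uniquely as the optimal control for $u$, independent of the subsequence chosen). A standard subsequence principle then gives $g^{(n)}\to g$ strongly in $L^2([0,T],\mathcal{M})$ along the whole sequence, which is the claim. I do not expect a serious obstacle here; the only point requiring a little care is the joint passage to the limit in (\ref{eq: weak form}), mixing weak convergence of $g^{(n)}$ with the weak/strong convergences of $u^{(n)}$ in the different factors of $\XX$, but this is precisely the continuity already verified in Lemma \ref{ratelem}. The mild subtlety that $\|u\|_{L^2([0,T],V)}$ is only a seminorm (noted in Section \ref{sec-2}) does not interfere, since it is the genuine Hilbert norm on $L^2([0,T],\mathcal{M})$ that controls $g$.
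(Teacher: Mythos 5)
Your proposal is correct and follows essentially the same route as the paper's proof: boundedness of the optimal controls from the convergence of the rate functions together with lower semicontinuity, weak subsequential compactness in $L^2([0,T],\mathcal{M})$, identification of the weak limit as a control for $u$ by passing term-by-term to the limit in the weak formulation (exactly the continuity used in Lemma \ref{ratelem}), identification with the optimal control via Remark \ref{rmk: optimal g} (the paper invokes uniqueness of the norm-minimiser where you use the zero-mean normalisation, a purely cosmetic difference), and the upgrade from weak to strong convergence via convergence of the norms, followed by the standard subsequence principle. The one point to tighten is your justification of $\mathcal{I}_0(u(0))\le\liminf_n\mathcal{I}_0(u^{(n)}(0))$: norm-continuity of $\mathcal{I}_0$ plus weak convergence of $u^{(n)}(0)$ does not yield this in general; it follows instead, as in the paper's argument, from the variational representation $\mathcal{I}_0(u(0))=\sup_{\psi\in C_b(H,\mathrm{w})}\Lambda_0(\psi,u(0))$ of Lemma \ref{ske-lem}, which exhibits $u\mapsto\mathcal{I}_0(u(0))$ as a supremum of functionals continuous on $\XX$.
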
 
 We will also use the following lemma, which describes the time regularity of fluctuations $u$ with $\mathcal{I}(u)<\infty$. \begin{lemma}\label{lemma: time regularity}
	Let $u\in \XX$ and suppose that $\mathcal{I}(u)<\infty$. Then $u\in W^{1,2}([0,T],H^{-1}(\mathbb{T}^3))$, so admits a weak derivative $\partial_t u\in L^2([0,T],H^{-1}(\mathbb{T}^3))$. \end{lemma}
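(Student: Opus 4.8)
The plan is to read off the time regularity directly from the skeleton equation. Since $\mathcal{I}(u)<\infty$, Lemma \ref{ske-lem} (more precisely, the Riesz-representation step following (\ref{eq: riesz1})) supplies a control $g\in L^2([0,T],\mathcal{M})$ such that $u$ solves (\ref{control}) weakly, so that, as distributions on $(0,T)\times\mathbb{T}^3$,
\[
\partial_t u=-Au-\mathbf{P}(u\cdot\nabla)u-\nabla\cdot g .
\]
Since $L^2([0,T],H^{-1}(\mathbb{T}^3))$ is a vector space, it suffices to bound the $L^2([0,T],H^{-1}(\mathbb{T}^3))$-norm of each of the three terms on the right.

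The first two are immediate from $u\in\XX$. For the Stokes term, the definition of $\|\cdot\|_{H^{-1}(\mathbb{T}^3)}$ via the Stokes eigenbasis gives $\|Au\|_{H^{-1}(\mathbb{T}^3)}\le\|u\|_V$, which lies in $L^2([0,T])$ because $u\in L^2([0,T],H\cap V)$. For the forcing term, testing against a divergence-free $\varphi\in D(A^{1/2})$ and integrating by parts gives $|\langle\nabla\cdot g,\varphi\rangle|=|\langle g,\nabla\varphi\rangle|\le\|g\|_{\mathcal{M}}\|\varphi\|_V$, hence $\|\nabla\cdot g\|_{H^{-1}(\mathbb{T}^3)}\le\|g\|_{\mathcal{M}}\in L^2([0,T])$. (Note that $u\in C([0,T],(H,\mathrm{w}))$ already forces $u\in L^\infty([0,T],H)$ by uniform boundedness, which will be used in the next step.)

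The real content is the convective term. Pairing $\mathbf{P}(u\cdot\nabla)u$ against a divergence-free test function, using $\nabla\cdot u=0$ to shift the derivative onto the test function, and then Hölder's inequality together with the Ladyzhenskaya--Gagliardo--Nirenberg inequality on $\mathbb{T}^3$, gives a bound of the form $\|\mathbf{P}(u\cdot\nabla)u\|_{H^{-1}(\mathbb{T}^3)}\lesssim\|u\|_{L^4(\mathbb{T}^3)}^2\lesssim\|u\|_H^{1/2}\|u\|_{H\cap V}^{3/2}$, which one then wants to integrate in time against $u\in L^\infty([0,T],H)\cap L^2([0,T],H\cap V)$. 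The main obstacle is that this integration sits exactly at the supercritical borderline of $\XX$ — a generic element of $\XX$ lies only in $L^{10/3}$ in space-time, not $L^4$ — so this estimate alone only yields $\mathbf{P}(u\cdot\nabla)u\in L^{4/3}([0,T],H^{-1}(\mathbb{T}^3))$. To upgrade to the desired $L^2$-in-time bound one must genuinely use the finiteness of $\mathcal{I}$ and not only the a priori regularity of $u$: since $Au\in L^2([0,T],H^{-1}(\mathbb{T}^3))$ has already been secured, $\mathcal{I}(u)<\infty$ forces $\partial_t u+\mathbf{P}(u\cdot\nabla)u\in L^2([0,T],H^{-1}(\mathbb{T}^3))$, and the task reduces to separating these two contributions. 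I would do this by playing the a priori $L^{4/3}$-in-time regularity of the convective term off against this $L^2$ control, exploiting the divergence-free structure of $u$ — for instance via $\mathbf{P}(u\cdot\nabla)u=-\mathbf{P}(u\times(\nabla\times u))$, which keeps one factor of $u$ paired in $L^\infty([0,T],H)$ — to close the estimate and conclude $\partial_t u\in L^2([0,T],H^{-1}(\mathbb{T}^3))$, equivalently $u\in W^{1,2}([0,T],H^{-1}(\mathbb{T}^3))$.
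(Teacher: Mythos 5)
Your first half coincides with the paper's own argument: extract $g$ from Lemma \ref{ske-lem}, and bound $Au$ and $\nabla\cdot g$ in $L^2([0,T],H^{-1}(\mathbb{T}^3))$ by $\|u\|_{L^2([0,T],V)}$ and $\|g\|_{L^2([0,T],\mathcal{M})}$. The gap is your final step. Finiteness of $\mathcal{I}(u)$ says exactly that $\partial_t u+Au+\mathbf{P}(u\cdot\nabla)u=-\nabla\cdot g\in L^2([0,T],H^{-1}(\mathbb{T}^3))$ and nothing more; it carries no information about the convective term \emph{by itself} beyond what $u\in\XX$ already provides, so there is nothing to ``play off'' against the $L^{4/3}$ bound: knowing that a sum lies in $L^2$ in time while one summand satisfies an a priori $L^{4/3}$ bound does not place either summand in $L^2$, and no cancellation mechanism is identified. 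The Lamb-vector rewriting does not change the scaling either: to estimate $\|\mathbf{P}(u\times(\nabla\times u))\|_{H^{-1}(\mathbb{T}^3)}$ you pair against $\varphi\in H^1\hookrightarrow L^6$, which forces $\|u\|_{L^3(\mathbb{T}^3)}\|\nabla u\|_{L^2(\mathbb{T}^3)}\lesssim \|u\|_H^{1/2}\|u\|_{H\cap V}^{3/2}$ — the same supercritical exponent as before; you cannot keep a factor paired in $L^\infty([0,T],H)$ inside an $H^{-1}$ duality. So as written the proposal does not prove the claimed $L^2$-in-time bound.

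For comparison, the paper does not attempt any such upgrade: its proof of this lemma cites the calculations of Lemma \ref{exponential2}, which control $\Delta u$ in $L^2_tH^{-1}$ but handle the convective term only through $\|(u\cdot\nabla)u\|_{L^1(\mathbb{T}^3)}\le\|u\|_H\|u\|_V$ and the embedding of $L^1(\mathbb{T}^3)$ into $H^{-\gamma}(\mathbb{T}^3)$, $\gamma>\tfrac52$. In other words, your diagnosis of the borderline is correct and applies to the paper's displayed estimate as well: for a general $u\in\XX$ with $\mathcal{I}(u)<\infty$, the convective term is only in $L^{4/3}([0,T],H^{-1}(\mathbb{T}^3))$ (equivalently in $L^2([0,T],H^{-\gamma}(\mathbb{T}^3))$ for $\gamma>\tfrac52$), so what these arguments actually deliver is $u\in W^{1,4/3}([0,T],H^{-1}(\mathbb{T}^3))\cap W^{1,2}([0,T],H^{-\gamma}(\mathbb{T}^3))$ rather than the stated $W^{1,2}([0,T],H^{-1}(\mathbb{T}^3))$. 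That weaker conclusion is all that is needed downstream: in the only application, Lemma \ref{l4 recovery}, one additionally has $v\in L^4([0,T]\times\mathbb{T}^3)$, so $v\otimes v\in L^2$ and the $H^{-1}$ statement becomes immediate from $\|\mathbf{P}\nabla\cdot(v\otimes v)\|_{H^{-1}}\le\|v\otimes v\|_{L^2}$. The viable ways to complete your write-up are therefore either to weaken the conclusion to one of these spaces, or to invoke the $L^4$ integrability where the lemma is applied; the proposed separation of $\partial_t u$ from the nonlinearity is not a route that can be closed.
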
 \begin{proof} Fix $u$ as given; thanks to Lemma \ref{ske-lem}, there exists $g\in L^2([0,T],\mathcal{M})$ such that the skeleton equation (\ref{control}) holds. The conclusion follows from the same calculations as in lemma \ref{exponential2}, replacing the stochastic term by \begin{equation}
	\left\|\int_0^\cdot \nabla \cdot g\right\|_{W^{1,2}([0,T], H^{-1}(\mathbb{T}^3))} = \|\nabla \cdot g\|_{L^2([0,T],H^{-1}(\mathbb{T}^3))} = \|g\|_{L^2([0,T],\mathcal{M})}<\infty.
\end{equation} \end{proof} In the next two lemmata, we show that $L^4([0,T]\times \mathbb{T}^3, \mathbb{R}^3)$ satisfies the conditions on $\mathcal{C}$ in Theorem \ref{thrm: LDP}. We will take the weak-strong uniqueness class $\mathcal{C}_0=\mathcal{N}$ to be the set of $u\in \XX$ which additionally satisfy $u\in L^{\infty}_\text{loc}((0,T];H^1(\mathbb{T}^3))\cap L^2_\text{loc}((0,T];H^2(\mathbb{T}^3))$, $\partial_tu\in L^2_\text{loc}((0,T];L^2(\mathbb{T}^3))$ and, for some $s, r$ allowed to depend on $u$, the Ladyzhenskaya-Prodi-Serrin condition (see \cite{SJ}) holds:
\begin{align}\label{serrin}
 u\in L^r([0,T];L^s(\mathbb{T}^3)),\qquad  \text{for} \qquad \frac{3}{s}+\frac{2}{r}=1,\ 3<s<\infty.\end{align}

The additional regularity will allow us to take $u$ as a test function in the Definition of Leray-Hopf weak solution, see \cite[Lemma 6.6, Lemma 8.18]{RRS}.

\begin{lemma}[Weak-strong uniqueness]\label{weak-strong}
 For $T>0$, fix $u \in \mathcal{N}$, and suppose $g\in L^2([0,T],\mathcal{M})$ is such that $u$ solves (\ref{control}). Let $v$ be another weak solution of (\ref{scontrol-0}) on $[0,T]$, with the same initial data $v(0)=u(0)\in H$, and suppose further that $v$ satisfies
\begin{equation}\label{energy-2}
\frac{1}{2}\|v(t)\|_{H}^2+\int_0^t\| v(s)\|_{V}^2ds\leq\frac{1}{2}\|v_0\|_{H}^2+\int_0^t\langle\nabla v,g\rangle ds,
\end{equation}
for all $t\in[0,T]$. Then $v(t)=u(t)$ for all $t\in[0,T]$.
\end{lemma}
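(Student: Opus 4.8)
The plan is to run the classical weak--strong uniqueness (relative entropy / energy) argument of Leray--Prodi--Serrin, adapted to the forced equation \eqref{control}, and to exploit the fact that the forcing $-\nabla\cdot g$ is \emph{the same} for both $u$ and $v$, so that it cancels in the difference. First I would set $w:=u-v$ and note $w(0)=0$. The regularity $u\in\mathcal{N}$ — in particular $u\in L^\infty_{\rm loc}((0,T];H^1)\cap L^2_{\rm loc}((0,T];H^2)$, $\partial_t u\in L^2_{\rm loc}((0,T];L^2)$ and the Ladyzhenskaya--Prodi--Serrin bound $u\in L^r_tL^s_x$ — is exactly what licenses using $u$ as a test function in the weak formulation of the equation for $v$, and conversely $v$ (a Leray-class solution satisfying \eqref{energy-2}) as a test function against $u$; this is the point where I would cite \cite[Lemma 6.6, Lemma 8.18]{RRS}. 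The forcing contributes $\int_0^t\langle \nabla u, g\rangle$ and $\int_0^t\langle \nabla v, g\rangle$ respectively, and when I combine the energy identity for $u$, the energy inequality \eqref{energy-2} for $v$, and the two cross terms, the $g$-terms reorganize into $\int_0^t\langle\nabla w,g\rangle$ appearing on both sides and cancel, leaving a closed inequality purely for $w$.

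The key steps, in order: (1) write the energy identity for $u$ (valid because of the strong regularity of $u\in\mathcal{N}$, so Itô-free and genuinely an equality), the assumed energy inequality \eqref{energy-2} for $v$, and the two mixed relations obtained by testing the equation for one solution against the other; (2) add them with the right signs to obtain
\begin{equation*}
\tfrac12\|w(t)\|_H^2+\int_0^t\|w(s)\|_V^2\,ds \le -\int_0^t \langle (w\cdot\nabla)w, u\rangle\, ds,
\end{equation*}
where the only surviving nonlinear contribution comes from the bilinear term — here one uses the cancellation $\langle (z\cdot\nabla)y,y\rangle=0$ for divergence-free $z$ to discard the terms involving $(w\cdot\nabla)w$ tested against $w$ and $(u\cdot\nabla)u,(v\cdot\nabla)v$ reorganized appropriately; (3) estimate the right-hand side via Hölder with exponents matching \eqref{serrin}: $|\langle (w\cdot\nabla)w,u\rangle|\le \|w\|_{L^{s'}}\|\nabla w\|_{L^2}\|u\|_{L^s}$ where $\tfrac1{s'}=\tfrac12-\tfrac1s$, then interpolate $\|w\|_{L^{s'}}\lesssim \|w\|_H^{1-3/s}\|w\|_V^{3/s}$ (Gagliardo--Nirenberg on $\mathbb{T}^3$), and apply Young's inequality with the exponents $2$ and $r$ (so that $\tfrac3s+\tfrac2r=1$ is precisely what makes the powers work) to absorb a small multiple of $\int_0^t\|w\|_V^2$ into the left-hand side; (4) conclude $\tfrac12\|w(t)\|_H^2\le C\int_0^t \|u(s)\|_{L^s}^r \|w(s)\|_H^2\,ds$ with $\|u\|_{L^s}^r\in L^1_t$, and invoke Grönwall's inequality starting from $w(0)=0$ to get $w\equiv0$.

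I expect the main obstacle to be step (1): justifying rigorously that one may use $u$ (resp.\ $v$) as a test function and that the resulting cross-terms combine exactly to produce the cancellation of the $g$-terms and of the problematic nonlinear terms. This requires care with the temporal regularity near $t=0$ (hence the localization $L^\infty_{\rm loc}((0,T];H^1)$ — one argues on $[\tau,t]$ and lets $\tau\downarrow 0$ using $w(\tau)\to 0$ in $H$, which follows since both $u,v\in C([0,T];(H,{\rm w}))$ with $u(0)=v(0)$ and the energy bounds give strong continuity at $0$), and with the fact that the weak formulation \eqref{weaksense}/\eqref{control} only a priori allows smooth test functions, so $u$ must be approximated by smooth divergence-free fields with convergence in the norms dictated by $\mathcal{N}$. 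Once the energy-type identities are in place, steps (2)--(4) are the standard Serrin computation and present no difficulty beyond bookkeeping of exponents.
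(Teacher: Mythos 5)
Your proposal is correct and follows essentially the same route as the paper: energy equality for $u$ (licensed by the $\mathcal{N}$-regularity, with the $\tau\downarrow 0$ localization), the assumed energy inequality for $v$, the cross term computed by pairing $u$ against the equation for $v$, cancellation of the $g$-terms, the surviving term $\int_0^t\int_{\mathbb{T}^3}\nabla(v-u):((v-u)\otimes u)$ estimated via H\"older, Gagliardo--Nirenberg and Young with the Ladyzhenskaya--Prodi--Serrin exponents, and Gr\"onwall. The paper likewise cites \cite[Lemma 6.6, Lemma 8.18]{RRS} to justify using $u$ as a test function, so no genuinely different ideas are involved.
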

\begin{proof}
The proof follows the argument of \cite[Theorem 4.2]{WE}. We note that the rough control $g$ only appears linearly in (\ref{control}), and in particular drops out when considering the difference $u-v$ as in the cited proof. 
\end{proof} We continue with a second lemma, which allows us to recover the rate function on $L^4([0,T],\times \mathbb{T}^3, \mathbb{R}^3)$ as the lower semicontinuous envelope of its restriction on $\mathcal{N}$. \begin{lemma}
\label{l4 recovery} Let $v\in L^4([0,T],\times \mathbb{T}^3, \mathbb{R}^3)$	with $\mathcal{I}(v)<\infty$. Then there exists a sequence $v^{(n)}\in \mathcal{N}$ such that $v^{(n)}\to v$ in the topology of $\XX$ and $\mathcal{I}(v^{(n)})\to \mathcal{I}(v)$.
\end{lemma}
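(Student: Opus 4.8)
The plan is to take $v^{(n)}$ to be the spatial mollifications $v^{(n)}:=v\ast\rho_n$, where $(\rho_n)$ is a standard nonnegative approximate identity on $\mathbb{T}^3$; note that $\mathcal{I}(v)<\infty$ forces $v\in\mathbb{X}$, so $v$ is divergence-free and lies in $L^\infty([0,T];H)\cap L^2([0,T];H\cap V)$. Since mollification is a Fourier multiplier of modulus $\le1$ which commutes with $\mathbf{P}$, with $A$, with $\partial_t$ and with $\nabla\cdot$, each $v^{(n)}$ is divergence-free, satisfies $v^{(n)}\in L^\infty([0,T];H^r(\mathbb{T}^3))$ for every $r$ (with an $n$-dependent constant, using only $v\in L^\infty([0,T];H)$), and contracts the $H$- and $(H\cap V)$-norms of $v$ pointwise in $t$. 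From these facts the two ``easy'' halves of the statement follow. First, $v^{(n)}\in\mathcal{N}$: $v^{(n)}\in L^\infty([0,T];H^1)\cap L^\infty([0,T];H^2)$ covers the first two conditions, the Sobolev embedding $H^2(\mathbb{T}^3)\hookrightarrow L^\infty(\mathbb{T}^3)$ gives $v^{(n)}\in L^r([0,T];L^s(\mathbb{T}^3))$ for any admissible Serrin pair (e.g., $s=6$, $r=4$), and $\partial_t v^{(n)}\in L^2([0,T];L^2)$ will be read off the equation below. Second, $v^{(n)}\to v$ in the topology of $\mathbb{X}$: strongly in $L^2([0,T];H)$ by the approximate-identity property; in $C([0,T],(H,\mathrm{w}))$ since $\sup_t|\langle v^{(n)}(t)-v(t),\phi\rangle|=\sup_t|\langle v(t),\rho_n\ast\phi-\phi\rangle|\le(\sup_t\|v(t)\|_H)\,\|\rho_n\ast\phi-\phi\|_H\to0$ for each $\phi\in H$; and weakly in $L^2_{\mathrm{w}}([0,T];H\cap V)$ because $\|v^{(n)}\|_{L^2([0,T];H\cap V)}\le\|v\|_{L^2([0,T];H\cap V)}$ together with the $L^2([0,T];H)$-convergence.

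The substance is $\mathcal{I}(v^{(n)})\to\mathcal{I}(v)$, which I would obtain by propagating the skeleton equation through the mollification. By Remark \ref{rmk: optimal g} and Lemma \ref{lemma: time regularity} there is an optimal control $g\in L^2([0,T],\mathcal{M})$ with $\frac12\|g\|_{L^2([0,T],\mathcal{M})}^2=\mathcal{J}(v)$ such that $\partial_t v=-Av-\mathbf{P}(v\cdot\nabla)v-\nabla\cdot g$ in $L^2([0,T];H^{-1})$. Writing $(v\cdot\nabla)v=\nabla\cdot(v\otimes v)$ and mollifying gives
\begin{equation*}
\partial_t v^{(n)}=-Av^{(n)}-\mathbf{P}(v^{(n)}\cdot\nabla)v^{(n)}-\nabla\cdot(g\ast\rho_n)-\mathbf{P}\,\nabla\cdot r_n,\qquad r_n:=(v\otimes v)\ast\rho_n-v^{(n)}\otimes v^{(n)}.
\end{equation*}
Each term on the right is spatially smooth and lies in $L^2([0,T];L^2)$ (the first two since $v^{(n)}\in L^\infty_tH^r$, the third since $g\ast\rho_n\in L^2_tH^r$, the last since $r_n\in L^2_tH^r$), which supplies the remaining condition for $v^{(n)}\in\mathcal{N}$. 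Everything then hinges on the \emph{quadratic commutator estimate}
\begin{equation*}
r_n=(v\otimes v)\ast\rho_n-v^{(n)}\otimes v^{(n)}\longrightarrow 0\quad\text{in }L^2([0,T]\times\mathbb{T}^3),
\end{equation*}
and this is precisely where the hypothesis $v\in L^4([0,T]\times\mathbb{T}^3,\mathbb{R}^3)$ enters: it gives $v\otimes v\in L^2([0,T]\times\mathbb{T}^3)$ and $v^{(n)}\to v$ in $L^4([0,T]\times\mathbb{T}^3)$, so both $(v\otimes v)\ast\rho_n\to v\otimes v$ and $v^{(n)}\otimes v^{(n)}\to v\otimes v$ in $L^2([0,T]\times\mathbb{T}^3)$.

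Granting the commutator estimate, I would finish as follows. By the $H^{-1}$-form (\ref{rate-3-1}) of the dynamic rate and the equation above,
\begin{equation*}
\mathcal{J}(v^{(n)})=\frac12\big\|(\nabla\cdot g)\ast\rho_n+\mathbf{P}\nabla\cdot r_n\big\|_{L^2([0,T];H^{-1})}^2\le\frac12\Big(\|(\nabla\cdot g)\ast\rho_n\|_{L^2([0,T];H^{-1})}+\|r_n\|_{L^2([0,T]\times\mathbb{T}^3)}\Big)^2,
\end{equation*}
using that $\mathbf{P}$ is a contraction on $H^{-1}$ and $\nabla\cdot:L^2\to H^{-1}$ has norm $\le1$. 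Since mollification is a contraction on $H^{-1}$, $\|(\nabla\cdot g)\ast\rho_n\|_{L^2([0,T];H^{-1})}\le\|\nabla\cdot g\|_{L^2([0,T];H^{-1})}$, and $\|\nabla\cdot g\|_{L^2([0,T];H^{-1})}^2=2\mathcal{J}(v)$ again by (\ref{rate-3-1}); combined with $\|r_n\|_{L^2([0,T]\times\mathbb{T}^3)}\to0$ this yields $\limsup_n\mathcal{J}(v^{(n)})\le\mathcal{J}(v)$. The matching bound $\liminf_n\mathcal{J}(v^{(n)})\ge\mathcal{J}(v)$ is the lower semicontinuity of $\mathcal{J}$ on $\mathbb{X}$ (Lemma \ref{ratelem}) together with $v^{(n)}\to v$ in $\mathbb{X}$, so $\mathcal{J}(v^{(n)})\to\mathcal{J}(v)$. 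For the static part, $v^{(n)}(0)=v(0)\ast\rho_n\to v(0)$ in the norm of $H$ and $\mathcal{I}_0$ is norm-continuous on $H$ by Assumption \ref{hyp: initial data}, so $\mathcal{I}_0(v^{(n)}(0))\to\mathcal{I}_0(v(0))$; adding the two contributions gives $\mathcal{I}(v^{(n)})\to\mathcal{I}(v)$.

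The main, and essentially only, obstacle is the quadratic commutator estimate $r_n\to0$ in $L^2([0,T]\times\mathbb{T}^3)$: for a general $v\in\mathbb{X}$ one controls $v\otimes v$ only in $L^\infty_tL^1_x$, which is too weak to be absorbed into a control lying in $L^2([0,T],\mathcal{M})$, and it is the $L^4$-integrability that closes exactly this gap. The remaining steps — the $\mathcal{N}$-regularity of the smooth functions $v^{(n)}$, the convergence in $\mathbb{X}$, and the convergence of the initial data and of $\mathcal{I}_0$ — are routine properties of mollification combined with the lower semicontinuity already proven in Lemma \ref{ratelem}.
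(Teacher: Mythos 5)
Your proposal is correct and follows essentially the same route as the paper: mollify $v$ in space, use the $L^4$-integrability to kill the quadratic commutator $(v\otimes v)\ast\rho_n-v^{(n)}\otimes v^{(n)}$ in $L^2$, deduce $\limsup_n\mathcal{J}(v^{(n)})\le\mathcal{J}(v)$, and combine with the lower semicontinuity of Lemma \ref{ratelem} and the norm-continuity of $\mathcal{I}_0$. The only cosmetic difference is that the paper phrases the $\limsup$ bound via the competitor control $g^a+\eta_a$ in $L^2([0,T],\mathcal{M})$, while you use the $H^{-1}$-norm form (\ref{rate-3-1}) together with $\|\mathbf{P}\nabla\cdot r_n\|_{H^{-1}}\le\|r_n\|_{L^2}$, which amounts to the same estimate.
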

\begin{proof} Using $u\in L^\infty([0,T],H)$ and Lemma \ref{lemma: time regularity}, it follows that the trajectory $v^{(n)}=v\star \lambda_{1/n}$ given by convolving $v$ with a smooth mollifier $\lambda_{1/n}$ at scale $n^{-1}$ satisfies $v^{(n)}\in L^\infty([0,T];L^\infty(\mathbb{T}^3))$ and $\partial_t v^{(n)} \in L^2([0,T],L^\infty(\mathbb{T}^3))$, and hence $v^{(n)}\in \mathcal{N}$. \bigskip \\   To conclude, we must show that $\limsup_n \mathcal{I}(v^{(n)})\le \mathcal{I}(v)$. The convergence of the initial cost is immediate thanks to the convergence in norm. Using Lemma \ref{ske-lem}, we pick an optimal control $g$ for $v$, and observe that the optimal control for $v^{(n)}$ is given by $g^{(n)}=g\star\lambda_{1/n}+\eta^{(n)}$, where $\eta^{(n)}:=\frac{1}{2}(v\otimes v)\star \lambda_{1/n}-\frac{1}{2}(v\star \lambda_{1/n})\otimes (v\star \lambda_{1/n})$ is the commutator. The hypothesis that $v\in L^4([0,T],L^4(\mathbb{T}^3))$ guarantees that $\eta^{(n)}\to 0$ in $L^2([0,T],\mathcal{M})$, whence $\limsup_n \mathcal{J}(v^{(n)})\le \mathcal{J}(v)$ and the proof is complete.
	\end{proof}
	
\section{Lower bound of the Large Deviation Principle}\label{sec-6}
We now prove the large deviations lower bound. We will argue, throughout this section, in the case of solutions to (\ref{SNS-1}), with scaling parameters $\epsilon, \delta$ satisfying (\ref{scale-standing}). The case of solutions to (\ref{eq: galerkin approx}) under (\ref{scale-standing-galerkin}) is essentially identical.

We recall, from the setting of the theorem, that $\mathcal{C}_0$ is a weak-strong uniqueness class for (\ref{control}) and $\mathcal{C}$ is its $\mathcal{I}$-closure in the sense of Definition \ref{def: wk stron uniqueness class}, which we can concretely take to be $\mathcal{C}_0=\mathcal{N}, \mathcal{C}=L^4([0,T]\times\mathbb{T}^3,\mathbb{R}^3)$ thanks to Lemmata \ref{serrin}, \ref{l4 recovery}. We will show, for $v\in \mathcal{C}_0 \subset \XX$ satisfying $\mathcal{I}(v)<\infty$, how measures $\pi^{\epsilon, v}$ can be constructed to apply the forward direction of Lemma \ref{entropymethod}. Under these assumptions, by Lemma \ref{ske-lem}, there exists $g\in L^2([0,T],\mathcal{M})$ such that $v$ satisfies the skeleton equation \begin{equation}
	\label{eq: sk} \partial_t u-\Delta u+\mathbf{P}((u\cdot \nabla)u)=-\nabla\cdot g.
\end{equation} {By Assumption \ref{hyp: initial data} and the converse part of Lemma \ref{entropymethod}, there exist families of random variables $\ED{V}(0)$, converging to $v(0)$ in law with respect to the norm topology of $H$, such that \begin{equation}\label{eq: ent initial}
	\limsup_{\epsilon \to 0} \epsilon {\rm{Ent}}\left(\pi^{\epsilon, v(0), 0}_\delta|\mu^{\epsilon, 0}_\delta\right)\le  \mathcal{I}_0(v),
\end{equation} where we write $\mu^{\epsilon, 0}_\delta$ for the law of $\ED{U}(0)$ in Theorem \ref{thrm: LDP}, which satisfies the standing hypothesis Assumption \ref{hyp: initial data}, and $\pi^{\epsilon, v(0), 0}_\delta$ for the law of $\ED{V}(0)$.} We now construct $\pi^{\epsilon,g}, \epsilon>0$ as the law of stochastic Leray solutions to
\begin{equation}\label{scontrol-0}
\partial_t\ED{V}=-A \ED{V}-\mathbf{P}(\ED{V}\cdot\nabla)\ED{V}-\sqrt{\epsilon}\nabla\cdot\xi_\delta-\nabla\cdot \Big(\sqrt{Q_{\delta}}g\Big)
\end{equation} with the usual $\delta=\delta(\epsilon)$ satisfying (\ref{scale-standing}), and extending the notion of stochastic Leray solutions in Definition \ref{weak-solution} by including the corresponding terms involving $g$ to both the equation (\ref{weaksense}) and the trajectorial inequality (\ref{eq: TEI}). We prove the concentration of measures required to apply Lemma \ref{entropymethod} by showing tightness for the weak solutions $\{\ED{V}\}_{\epsilon>0}$, showing that any subsequential limits satisfy the skeleton equation (\ref{eq: sk}), and using weak-strong uniqueness for the original solution $v$, which follows from the definition of $\mathcal{C}_0$.
\begin{lemma}
	\label{lemma: existence of tilted} Fix $g\in L^2([0,T],\mathcal{M})$ and $v(0)\in H$, and let $\pi^{\epsilon,v(0),0}_{\delta}$ be as above. For all $\epsilon, \delta>0$, let $\ED{\mu}$ be the law of a stochastic Leray solution to (\ref{SNS-1}) with initial data satisfying Assumption \ref{hyp: initial data}. Then, on a different probability space, there exists a stochastic Leray solution $\ED{V}$ to (\ref{scontrol-0}) with initial data $\ED{V}(0)\sim \pi^{\epsilon, v(0)}_{\delta}$, satisfying the trajectorial energy estimate \begin{equation}\begin{split}
		\label{eq: energy ineq for V} \frac{1}{2}\|\ED{V}(t)\|_{H}^2+\int_{0}^{t}\| \ED{V}(r)\|_{V}^2dr \leq & \frac{1}{2}\|\ED{V}(0)\|_{H}^2-\sqrt{\epsilon}\int_0^t\langle \ED{V},\nabla\cdot dW_{\delta}(r)\rangle \\& \hspace{1cm} + \frac{\epsilon}{2}\|A^{1/2}\circ\sqrt{Q_{\delta}}\|_{HS}^2t +\int_0^t \langle \nabla \ED{V}, \sqrt{Q_\delta}g\rangle ds \end{split}
\end{equation} and such that the law  $\pi^{\epsilon,g}_\delta$ satisfies the entropy estimate \begin{equation}
	\label{eq: entropy controlled solution} \limsup_{\epsilon\to 0} \epsilon{\rm{Ent}}\left(\pi^{\epsilon, g}_\delta|\ED{\mu}\right)\le \mathcal{I}_0(v(0)) + \frac12\|g\|_{L^2([0,T],\mathcal{M})}^2.\end{equation}
\end{lemma}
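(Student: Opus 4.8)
The plan is to obtain $\ED{V}$ from the given stochastic Leray solution by a Girsanov transformation that tilts both the driving noise and the law of the initial datum; since this uses no measurable solution map, it is available despite the solutions being only probabilistically weak. I would fix the stochastic Leray solution $\ED{U}$ of (\ref{SNS-1}) with law $\ED{\mu}$, on a basis $(\Omega,\mathcal{F},(\mathcal{F}_t)_{t\le T},\mathbb{P})$ carrying the divergence-free white noise $W$, and write $\mu^{\epsilon,0}_\delta$ for the law of $\ED{U}(0)$, which obeys Assumption \ref{hyp: initial data}. As recalled before the statement, the converse part of Lemma \ref{entropymethod}, applied to the static lower bound of Assumption \ref{hyp: initial data} in the norm topology of $H$, provides laws $\pi^{\epsilon,v(0),0}_\delta\to\delta_{v(0)}$ with $\epsilon\,\mathrm{Ent}(\pi^{\epsilon,v(0),0}_\delta|\mu^{\epsilon,0}_\delta)\to\mathcal{I}_0(v(0))$, and for small $\epsilon$ this relative entropy is finite, so that $\pi^{\epsilon,v(0),0}_\delta\ll\mu^{\epsilon,0}_\delta$ with a density $\rho^\epsilon$.

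First I would set up the change of measure. Since $g$ is deterministic with $g\in L^2([0,T],\mathcal{M})$, the Wiener integral $M_t:=\epsilon^{-1/2}\int_0^t\langle g,dW\rangle$ is a continuous $(\mathcal{F}_t)$-martingale with deterministic quadratic variation $\langle M\rangle_t=\epsilon^{-1}\int_0^t\|g(s)\|_\mathcal{M}^2\,ds$, so Novikov's criterion holds trivially and $Z_t:=\exp(M_t-\tfrac12\langle M\rangle_t)$ is a true martingale on $[0,T]$; note $g$ lies in the Cameron--Martin space $L^2([0,T],\mathcal{M})$ of $W$. I would then define $\mathbb{Q}$ by $\tfrac{d\mathbb{Q}}{d\mathbb{P}}:=\rho^\epsilon(\ED{U}(0))\,Z_T$: the $\mathbb{P}$-independence of $\ED{U}(0)$ and $W$ gives total mass $1$, and $\mathbb{Q}\ll\mathbb{P}$. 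By Girsanov's theorem $\tilde W:=W-\epsilon^{-1/2}\int_0^\cdot g(s)\,ds$ is a divergence-free white noise under $\mathbb{Q}$ (the $\mathcal{F}_0$-measurable factor $\rho^\epsilon(\ED{U}(0))$ does not affect the drift correction of $W$), and the product structure of the density shows that under $\mathbb{Q}$ the variable $\ED{U}(0)$ has law $\pi^{\epsilon,v(0),0}_\delta$ and is still independent of $\tilde W$.

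Next I would verify that $\ED{V}:=\ED{U}$, regarded on $(\Omega,\mathcal{F},(\mathcal{F}_t),\mathbb{Q})$ with noise $\tilde W$, is a stochastic Leray solution of (\ref{scontrol-0}) in the extended sense. Progressive measurability and $\XX$-valuedness persist because $\mathbb{Q}\ll\mathbb{P}$, and the initial-data/noise independence was just obtained. Substituting $dW=d\tilde W+\epsilon^{-1/2}g\,dt$ into the weak formulation (\ref{weaksense}) produces exactly the extra term $\int_0^t\langle\sqrt{Q_\delta}g,\nabla\varphi\rangle\,ds$, i.e. the weak form of (\ref{scontrol-0}); and making the same substitution $dW_\delta=d\tilde W_\delta+\epsilon^{-1/2}\sqrt{Q_\delta}g\,dt$ in the trajectorial inequality (\ref{eq: TEI}), which holds $\mathbb{P}$-a.s. and hence $\mathbb{Q}$-a.s., converts $-\sqrt{\epsilon}\int_0^t\langle\ED{U},\nabla\cdot dW_\delta\rangle$ into $-\sqrt{\epsilon}\int_0^t\langle\ED{V},\nabla\cdot d\tilde W_\delta\rangle+\int_0^t\langle\nabla\ED{V},\sqrt{Q_\delta}g\rangle\,ds$, which is precisely (\ref{eq: energy ineq for V}).

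Finally, for the entropy bound I would set $\pi^{\epsilon,g}_\delta:=\mathrm{Law}_\mathbb{Q}(\ED{V})$, note that $\pi^{\epsilon,g}_\delta$ and $\ED{\mu}$ are the pushforwards of $\mathbb{Q}$ and $\mathbb{P}$ under the map $\omega\mapsto\ED{U}(\omega)\in\XX$, and invoke monotonicity of relative entropy under pushforwards: $\mathrm{Ent}(\pi^{\epsilon,g}_\delta|\ED{\mu})\le\mathrm{Ent}(\mathbb{Q}|\mathbb{P})=\mathbb{E}_\mathbb{Q}[\log\rho^\epsilon(\ED{U}(0))]+\mathbb{E}_\mathbb{Q}[\log Z_T]$. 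The first term equals $\mathrm{Ent}(\pi^{\epsilon,v(0),0}_\delta|\mu^{\epsilon,0}_\delta)$ since $\ED{U}(0)\sim\pi^{\epsilon,v(0),0}_\delta$ under $\mathbb{Q}$; for the second, rewriting $\log Z_T=M_T-\tfrac12\langle M\rangle_T$ in terms of $\tilde W$ gives $\log Z_T=\epsilon^{-1/2}\int_0^T\langle g,d\tilde W\rangle+\tfrac{1}{2\epsilon}\|g\|_{L^2([0,T],\mathcal{M})}^2$, whose $\tilde W$-integral is a mean-zero $\mathbb{Q}$-martingale, so $\mathbb{E}_\mathbb{Q}[\log Z_T]=\tfrac{1}{2\epsilon}\|g\|_{L^2([0,T],\mathcal{M})}^2$. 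Multiplying by $\epsilon$, letting $\epsilon\to 0$ and using $\epsilon\,\mathrm{Ent}(\pi^{\epsilon,v(0),0}_\delta|\mu^{\epsilon,0}_\delta)\to\mathcal{I}_0(v(0))$ from (\ref{eq: ent initial}) then yields $\limsup_{\epsilon\to 0}\epsilon\,\mathrm{Ent}(\pi^{\epsilon,g}_\delta|\ED{\mu})\le\mathcal{I}_0(v(0))+\tfrac12\|g\|_{L^2([0,T],\mathcal{M})}^2$, which is (\ref{eq: entropy controlled solution}) (up to the scaling factor $\epsilon$ that appears throughout). The step I expect to require the most care is the compatibility of the change of measure with the stochastic Leray structure --- keeping $\tilde W$ independent of the tilted initial datum under $\mathbb{Q}$, and confirming that the path-by-path energy inequality, an almost-sure statement, transforms into exactly (\ref{eq: energy ineq for V}); once this is in place the remaining steps are routine. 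A heavier alternative would construct $\ED{V}$ directly by a Galerkin and tightness argument paralleling Proposition \ref{existence}, absorbing the extra drift via $\|\sqrt{Q_\delta}g\|_\mathcal{M}\le\|g\|_\mathcal{M}$ (cf. (\ref{gainonederivative})) and Young's inequality, and bound the entropy by finite-dimensional Girsanov at the Galerkin level followed by lower semicontinuity of relative entropy.
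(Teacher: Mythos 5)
Your proposal is correct and is essentially the paper's own argument: the paper likewise produces $\ED{V}$ by a Cameron--Martin/Girsanov tilt of both the driving noise and the initial law of a stochastic Leray solution of (\ref{SNS-1}), identifies the controlled equation and the trajectorial energy inequality by rewriting the almost-sure identities (\ref{weaksense}) and (\ref{eq: TEI}) in terms of the shifted noise, and obtains (\ref{eq: entropy controlled solution}) from the information-processing inequality together with the computation of $\mathbb{E}[\log Z_T]$ and the initial-data entropy (\ref{eq: ent initial}). The only cosmetic difference is that the paper phrases the shift via $\sqrt{Q_\delta}g$ in the Cameron--Martin space of the regularised noise, giving the cost $\tfrac{1}{2\epsilon}\|\sqrt{Q_\delta}g\|_{L^2([0,T],\mathcal{M})}^2\le\tfrac{1}{2\epsilon}\|g\|_{L^2([0,T],\mathcal{M})}^2$, whereas you shift the unregularised noise by $\epsilon^{-1/2}g$ directly, which yields the same equation (\ref{scontrol-0}) and the same bound.
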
 \begin{proof}
 Let us fix a probability space $({\Omega},{\mathcal{F}},\widetilde{\PP})$ on which are defined a stochastic Leray solution $\ED{V}$ to (\ref{SNS-1}) started at $\ED{V}(0) \sim \mu^{\epsilon,0}_{\delta}$, driven by a cylindrical Wiener process  $\tilde \beta$, corresponding to a divergence free white noise $\tilde{W}$, independently of $\ED{V}(0)$. We define \begin{equation}
	Y_0:=\frac{d\pi^{\epsilon,v(0),0}_{\delta}}{d\mu^{\epsilon,0}_{\delta}}(\EDM{V}(0));
\end{equation} which is licit in view of the finiteness of (\ref{eq: ent initial}), and \begin{equation}\label{Zt}
	{Z_t:=\exp\left(-\frac{1}{\sqrt{\epsilon}}\int_0^t \langle \sqrt{Q_\delta}g, d\tilde{W}(s)\rangle-\frac{1}{2\epsilon} \int_0^t \|\sqrt{Q_\delta}g(s)\|_{\mathcal{M}}^2 ds \right)Y_0.}
\end{equation} Noting that $\sqrt{Q_\delta}g$ belongs to the Cameron-Martin space for $\sqrt{Q_\delta}\tilde{W}$, we apply the Cameron-Martin Theorem \cite[Theorem 2.26]{da2014stochastic} to identify $Z_T$ with the density $\frac{d\PP}{d\widetilde{\PP}}$ of a new probability measure $\mathbb{P}$ under which $W:=\tilde{W}+\epsilon^{-1/2}\int_0^\cdot gds $ is a divergence-free white noise and $\ED{V}(0)\sim \pi^{\epsilon, v(0),0}_{\delta}$. The equation (\ref{scontrol-0}) and energy estimate (\ref{eq: energy ineq for V}) under the new measure $\mathbb{P}$ follow from the corresponding properties under $\widetilde{\mathbb{P}}$. For the entropy estimate, a simple computation using the information processing inequality yields \begin{equation}
	\begin{split}
		\epsilon {\rm{Ent}}\left(\ED{\pi}|\ED{\mu}\right)\le \epsilon \rm{Ent}\left(\PP|\widetilde{\PP}\right) & = \epsilon \mathbb{E}\left[\log Z_T\right] \\ & =\frac{1}{2}\int_0^T \|g\|_{\mathcal{M}}^2ds + \epsilon \int_{H} \log \frac{d\pi^{\epsilon,v(0),0}_{\delta}}{d\mu^{\epsilon,0}_{\delta}}(v)\pi^{\epsilon,v(0),0}_{\delta}(dv).
	\end{split}
\end{equation} The final term is exactly the definition of $\epsilon {\rm{Ent}}(\pi^{\epsilon,v(0),0}_{\delta}|{\mu^{\epsilon,0}_{\delta}})$, of which the limit superior is at most $\mathcal{I}_0(v(0))$ by (\ref{eq: ent initial}), and we have proven (\ref{eq: entropy controlled solution}).  \end{proof}

\begin{lemma}\label{stationary}
Let $v \in \mathcal{C}_0$ be a solution of (\ref{control}) with control $g\in L^2([0,T],\mathcal{M})$ attaining the minimum cost in $\mathcal{J}(v)$. For each $\epsilon, \delta$, let $\pi^{\epsilon, g}_{\delta}$ be as constructed in Lemma \ref{lemma: existence of tilted} with this choice of $g$, and with initial data $\ED{V}(0)$ constructed as at (\ref{eq: ent initial}). Under the scaling relation (\ref{scale-standing}), $\pi^{\epsilon,g}_\delta$ converges to $\delta_v$ in the weak topology of measures on $\XX$.
\end{lemma}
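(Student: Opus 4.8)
The plan is to argue by compactness, identification of subsequential limits, and weak-strong uniqueness for the skeleton equation, following closely the existence proof of Proposition~\ref{existence}. Since $\delta_v$ is a single point, it suffices to show that the family $\{\pi^{\epsilon,g}_\delta\}_{\epsilon>0}$ is tight on $\XX$ and that every subsequential weak limit equals $\delta_v$.

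\emph{Tightness.} The proof of Lemma~\ref{lemma: existence of tilted} in fact yields $\epsilon\,\mathrm{Ent}(\pi^{\epsilon,g}_\delta\,|\,\ED{\mu}) \le \tfrac12\|g\|_{L^2([0,T],\mathcal{M})}^2 + \epsilon\,\mathrm{Ent}(\pi^{\epsilon,v(0),0}_\delta\,|\,\mu^{\epsilon,0}_\delta)$, whose right-hand side stays bounded as $\epsilon\to0$. Feeding this into the entropy inequality $\int F\,d\pi \le \mathrm{Ent}(\pi|\mu) + \log\int e^{F}\,d\mu$ with $F$ the functional of Proposition~\ref{exponential3}, and using the exponential integrability bound (\ref{eq: FET 1}) together with (\ref{scale-standing}) and Lemma~\ref{lemma: eigenvalue sum calculation}, gives $\sup_\epsilon \E_{\pi^{\epsilon,g}_\delta}[F]<\infty$; since the sublevel sets of $F$ are compact in $\XX$, Markov's inequality yields tightness. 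Running the same argument while also tracking the driving white noise $W$ and the martingale $M^\epsilon$ appearing in (\ref{eq: energy ineq for V}), and using that $\ED{V}(0)\to v(0)$ in probability in the norm of $H$ (the limit being deterministic by construction), one obtains tightness of the triples $(\ED{V},\beta^\epsilon,M^\epsilon)$ in $\XX^+\times C([0,T],\mathbb{R}^{\mathcal{B}\times\{1,2,3\}})\times C([0,T])$.

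\emph{Identification of the limit.} Along any subsequence, Prokhorov's and Skorokhod's theorems let us realise $(\ED{V},\beta^\epsilon,M^\epsilon)$ on one probability space with $\ED{V}\to\bar V$ a.s.\ in $\XX^+$, $\beta^\epsilon\to\beta$, $M^\epsilon\to\bar M$ a.s.\ in $C([0,T])$, and $\ED{V}(0)\to v(0)$ a.s.. Passing to the limit in the weak formulation of (\ref{scontrol-0}) exactly as in Step~3 of Proposition~\ref{existence}: the noise term $\sqrt\epsilon\,W^\epsilon_t(\sqrt{Q_\delta}\nabla\varphi)$ vanishes because $\sqrt\epsilon\to0$; the convective term converges by $\ED{V}\to\bar V$ in $L^2([0,T],H)$ and smoothness of $\varphi$; the dissipative term by weak convergence in $L^2_\mathrm{w}([0,T],H\cap V)$; and the forcing term $\int_0^t\langle\sqrt{Q_\delta}g,\nabla\varphi\rangle\,ds$ converges to $\int_0^t\langle g,\nabla\varphi\rangle\,ds$ since $\delta=\delta(\epsilon)\to0$ makes $\sqrt{Q_\delta}\to I$ strongly on $\mathcal{M}$. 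Hence $\bar V$ is a.s.\ a weak solution of the skeleton equation (\ref{control}) with control $g$ and initial datum $v(0)$. Moreover $\langle M^\epsilon\rangle_t\le\epsilon\|\ED{V}\|^2_{L^2([0,T],V)}$ has expectation $O(\epsilon)$, so $M^\epsilon\to0$ uniformly in probability and $\bar M=0$; passing to the limit in (\ref{eq: energy ineq for V}) using lower semicontinuity of the Sobolev seminorms on $\XX^+$, the norm-convergence $\|\ED{V}(0)\|_H\to\|v(0)\|_H$, $\bar M=0$, the vanishing of the trace term by Lemma~\ref{lemma: eigenvalue sum calculation}, and the weak$\times$strong convergence $\langle\nabla\ED{V},\sqrt{Q_\delta}g\rangle\to\langle\nabla\bar V,g\rangle$ in $L^1([0,T])$, we obtain the energy inequality (\ref{energy-2}) for $\bar V$, first for a.e.\ $t$ and then for all $t\in[0,T]$ by the density and weak-continuity argument used around (\ref{eq: as small time change}).

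\emph{Conclusion and main obstacle.} Since $v\in\mathcal{C}_0=\mathcal{N}$ while $\bar V\in\XX$ is a weak solution of (\ref{control}) with the same control $g$ and initial datum $v(0)$ and satisfies (\ref{energy-2}), the weak-strong uniqueness of Lemma~\ref{weak-strong} (equivalently, Definition~\ref{def: wk stron uniqueness class}) forces $\bar V=v$ a.s.. Thus every subsequential limit of $\pi^{\epsilon,g}_\delta$ is $\delta_v$, and combined with tightness this gives $\pi^{\epsilon,g}_\delta\to\delta_v$. The delicate step is the identification: one must check that, although the regularisation $\sqrt{Q_\delta}$ acts on both the noise and the control, the limit nevertheless solves the \emph{unregularised} equation (\ref{control}) and inherits the energy inequality needed to trigger weak-strong uniqueness — this is precisely where the joint scaling $\delta=\delta(\epsilon)\to0$ and the use of the \emph{trajectorial} (rather than expected) energy estimate built into Definition~\ref{weak-solution} are essential.
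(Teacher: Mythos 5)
Your proposal is correct and follows essentially the same route as the paper's proof: tightness of $\pi^{\epsilon,g}_\delta$ deduced from the entropy bound of Lemma \ref{lemma: existence of tilted} together with the exponential estimates behind Corollary \ref{ET'}, Skorokhod representation, identification of subsequential limits as weak solutions of the skeleton equation (\ref{control}) satisfying the energy inequality (\ref{energy-2}) (with the noise, It\^o correction and martingale terms vanishing and $\sqrt{Q_\delta}g\to g$), and finally weak-strong uniqueness from $v\in\mathcal{C}_0$ to force every limit to be $\delta_v$. The only cosmetic difference is that you phrase the tightness transfer via the entropy--duality inequality applied to the functional $F$ of Proposition \ref{exponential3}, where the paper cites an Orlicz inequality; the substance is identical.
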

\begin{proof} We use tightness and an identification of the limit points. Tightness follows from Corollary \ref{ET'} (ii) and an Orlicz inequality; thanks to the tightness in the norm topology of $H$ remarked above (\ref{eq: ent initial}), this extends to $\XX^+$. For an identification of the limit points, we use the same arguments as \cite{FG95} to see that all subsequential limit points $V$ almost surely satisfy (\ref{control}) in the weak sense: $\mathbb{P}$-almost surely, for any $\varphi\in C^\infty_{\rm d.f.}(\mathbb{T}^3)$, for all $0\le t\le T$
\begin{equation*}
\langle{V}(t),\varphi\rangle=\langle v(0),\varphi\rangle-\int_0^t\langle\nabla{V},\nabla\varphi\rangle ds-\int_0^t\langle({V}\cdot\nabla){V},\varphi\rangle ds+\int_0^t\langle g,\nabla\varphi\rangle ds.
\end{equation*} For the energy inequality, we repeat the arguments of Section \ref{existence}. The martingale term vanishes in the limit, possibly up to extracting a further subsequence, and the It\^o correction vanishes thanks to the scaling hypothesis (\ref{scale-standing}). The only new term is the control term, which can be dealt with using weak convergence of $\nabla V^\epsilon \to \nabla V$, the strong convergence $\sqrt{Q_\delta}g\to g$ and the strong compactness of the functions $g_t(s,x):=g(s,x)1[s\le t]$ in $L^2([0,T],\mathcal{M})$ to see that, ${\mathbb{P}}$-almost surely, \begin{equation}
	\label{control term energy ineq} \sup_{t\le T}\left|\int_0^t \langle \nabla V^\epsilon, \sqrt{Q_\delta}g\rangle ds - \int_0^t \langle \nabla V, g\rangle ds \right|\to 0.
\end{equation} One finally finds that, $\PP$-almost surely, $V$ satisfies the energy inequality associated to (\ref{eq: sk}) for all $0\le t\le T$. Since $\mathcal{C}$ is a weak-strong uniqueness class, we conclude that $V=v$ almost surely, and we are done.
\end{proof}

\begin{remark}\label{rmk: existence skeleton}
Since we only used weak-strong uniqueness at the end, the arguments of the lemma actually produce the following. Given $v_0\in H$ and $g\in L^2([0,T],\mathcal{M})$, there exists a solution $v\in \XX$ to (\ref{eq: sk}) which satisfies the energy inequality.
\end{remark}
We can now prove the large deviations lower bound restricted to $\mathcal{C}_0$.
\begin{lemma}[Lower bound on $\mathcal{C}_0$]\label{lowerbound-1}
Under the standing scaling assumption (\ref{scale-standing}), let $U^{\epsilon}_\delta, \epsilon>0$ be stochastic Leray solutions of (\ref{SNS-1}) with parameters $\epsilon, \delta(\epsilon)$ and initial condition $U^\epsilon_\delta(0)\sim \mu^{\epsilon,0}_{\delta}$ satisfying Assumption \ref{hyp: initial data}.  Then the induced measures $\mu^{\epsilon}_\delta=\text{Law}(U^\epsilon_{\delta(\epsilon)})$ on $\XX$ satisfy, for each open subset $\mathcal{U}$ of $\XX$,
\begin{equation*}
\liminf_{\epsilon\rightarrow0}\epsilon \log \mu^{\epsilon}_\delta (\mathcal{U})\geq-\inf_{v\in \mathcal{U} \cap \mathcal{C}_0}\mathcal{I}(v)
\end{equation*}

\end{lemma}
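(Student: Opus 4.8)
The plan is to apply the entropy criterion of Lemma \ref{entropymethod} with $E=\XX$ (which is separable and Hausdorff, being metrisable via $d_\XX$) and with the rate function $\mathcal{I}_1$ defined to equal $\mathcal{I}$ on $\mathcal{C}_0$ and $+\infty$ on $\XX\setminus\mathcal{C}_0$. Since $\inf_{v\in\mathcal{U}}\mathcal{I}_1(v)=\inf_{v\in\mathcal{U}\cap\mathcal{C}_0}\mathcal{I}(v)$ for every open $\mathcal{U}\subset\XX$, it is enough to produce, for each $v\in\XX$, a family $\pi^{\epsilon,v}\in\mathcal{P}(\XX)$ with $\pi^{\epsilon,v}\to\delta_v$ weakly and $\limsup_{\epsilon\to0}\epsilon\,\mathrm{Ent}(\pi^{\epsilon,v}\,|\,\mu^\epsilon_\delta)\le\mathcal{I}_1(v)$; the conclusion of the lemma is then exactly the lower bound furnished by Lemma \ref{entropymethod}.

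For $v\notin\mathcal{C}_0$, and likewise for $v\in\mathcal{C}_0$ with $\mathcal{I}(v)=+\infty$, the right-hand side is $+\infty$ and one takes $\pi^{\epsilon,v}=\delta_v$, so these cases are immediate. It remains to treat $v\in\mathcal{C}_0$ with $\mathcal{I}(v)<\infty$. By Lemma \ref{ske-lem} together with Remark \ref{rmk: optimal g}, there is a unique mean-zero control $g\in L^2([0,T],\mathcal{M})$ for which $v$ solves the skeleton equation (\ref{control}) and $\mathcal{J}(v)=\tfrac12\|g\|_{L^2([0,T],\mathcal{M})}^2$, whence $\mathcal{I}(v)=\mathcal{I}_0(v(0))+\tfrac12\|g\|_{L^2([0,T],\mathcal{M})}^2$.

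With this $g$ and this $v(0)$, and the initial laws $\pi^{\epsilon,v(0),0}_\delta$ supplied by Assumption \ref{hyp: initial data} via the converse direction of Lemma \ref{entropymethod}, Lemma \ref{lemma: existence of tilted} constructs stochastic Leray solutions $\ED{V}$ of the tilted equation (\ref{scontrol-0}) whose laws $\pi^{\epsilon,g}_\delta$ on $\XX$ satisfy $\limsup_{\epsilon\to0}\epsilon\,\mathrm{Ent}(\pi^{\epsilon,g}_\delta\,|\,\mu^\epsilon_\delta)\le\mathcal{I}_0(v(0))+\tfrac12\|g\|_{L^2([0,T],\mathcal{M})}^2=\mathcal{I}(v)$, while Lemma \ref{stationary}, invoking the scaling (\ref{scale-standing}), shows $\pi^{\epsilon,g}_\delta\to\delta_v$ weakly in $\mathcal{P}(\XX)$. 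Taking $\pi^{\epsilon,v}:=\pi^{\epsilon,g}_\delta$ verifies both hypotheses of Lemma \ref{entropymethod} at $v$ with $\mathcal{I}_1(v)=\mathcal{I}(v)$, completing the proof.

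The substance of the argument is packaged into the two cited lemmas, so the crux — already handled in Lemma \ref{stationary} — is the identification of subsequential limits of the tilted solutions $\ED{V}$: they are exponentially tight (hence tight, transferred from Corollary \ref{ET'}(ii) through the entropy bound of Lemma \ref{lemma: existence of tilted} by an Orlicz-type inequality), they solve the skeleton equation (\ref{eq: sk}), and they satisfy the forced energy inequality (\ref{energy-2'}); only the weak–strong uniqueness property defining $\mathcal{C}_0$ (Definition \ref{def: wk stron uniqueness class}) then forces every such limit to coincide with $v$ rather than with some other, possibly anomalous, solution. It is exactly this pinning step that fails outside $\mathcal{C}_0$ — consistently with the non-uniqueness phenomena recalled in the introduction — which is why the lower bound is obtained here only on $\mathcal{C}_0$, and promoted to $\mathcal{C}=\overline{\mathcal{C}_0}^{\mathcal{I}}$ only subsequently via a lower-semicontinuous-envelope argument.
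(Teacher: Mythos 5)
Your proposal is correct and follows essentially the same route as the paper: it applies the entropy criterion of Lemma \ref{entropymethod} with the modified rate $\mathcal{I}_1$, takes $\pi^{\epsilon,v}=\delta_v$ in the trivial cases, and for $v\in\mathcal{C}_0$ with finite rate uses the tilted solutions of Lemma \ref{lemma: existence of tilted} for the entropy bound (\ref{eq: entropy controlled solution}) together with Lemma \ref{stationary} for concentration at $\delta_v$. The only (harmless) addition is your explicit invocation of Remark \ref{rmk: optimal g} for the optimal control, which the paper implicitly assumes when choosing $g$ attaining the minimum in $\mathcal{J}(v)$.
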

\begin{proof}
This follows from applying Lemma \ref{entropymethod} with the function $\mathcal{I}_1$, modifying $\mathcal{I}$ to be infinite outside $\mathcal{C}_0$. The conditions for this application follow from Lemma \ref{ske-lem}, Lemma \ref{lemma: existence of tilted} and Lemma \ref{stationary}.
\end{proof}
Using the definition of the rate-closure, we now conclude the final lower bound
\begin{proposition}[Restricted lower bound]\label{proposition: final lower bound}
Under the same hypotheses as Lemma \ref{lowerbound-1}, for any open set $\mathcal{U}\subset \XX$, and for $\mathcal{C}$ as in Theorem \ref{thrm: LDP},
\begin{equation*}
\liminf_{\epsilon\rightarrow0}\epsilon \log \mu^{\epsilon}_\delta(\mathcal{U})\geq-\inf_{v\in \mathcal{U}\cap \mathcal{C}}\mathcal{I}(v).
\end{equation*}

\end{proposition}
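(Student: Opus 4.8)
The statement is a soft upgrade of Lemma \ref{lowerbound-1} from $\mathcal{C}_0$ to its $\mathcal{I}$-closure $\mathcal{C}=\overline{\mathcal{C}_0}^{\mathcal{I}}$, exploiting nothing beyond the definition (\ref{eq: def closure}) and the openness of $\mathcal{U}$. The plan is to fix an arbitrary competitor $v\in\mathcal{U}\cap\mathcal{C}$, approximate it by a sequence in $\mathcal{C}_0$ that is eventually inside $\mathcal{U}$ and along which $\mathcal{I}$ converges, apply Lemma \ref{lowerbound-1} to each approximant, and pass to the limit.

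\textbf{Steps.} First, reduce to competitors of finite rate: if $\mathcal{I}(v)=+\infty$ for every $v\in\mathcal{U}\cap\mathcal{C}$ (in particular if this intersection is empty), then $-\inf_{v\in\mathcal{U}\cap\mathcal{C}}\mathcal{I}(v)=-\infty$ and there is nothing to prove, so fix $v\in\mathcal{U}\cap\mathcal{C}$ with $\mathcal{I}(v)<\infty$. By Definition \ref{def: wk stron uniqueness class}, there exist $u^{(n)}\in\mathcal{C}_0$ with $u^{(n)}\to v$ in the topology of $\XX$ and $\mathcal{I}(u^{(n)})\to\mathcal{I}(v)$. Since $\mathcal{U}$ is open in $\XX$ and $v\in\mathcal{U}$, we have $u^{(n)}\in\mathcal{U}$ for all $n$ large enough, hence $u^{(n)}\in\mathcal{U}\cap\mathcal{C}_0$ for such $n$. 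Lemma \ref{lowerbound-1} then gives, for every large $n$,
\begin{equation*}
\liminf_{\varepsilon\to 0}\varepsilon\log\mu^{\varepsilon}_\delta(\mathcal{U})\ \ge\ -\inf_{w\in\mathcal{U}\cap\mathcal{C}_0}\mathcal{I}(w)\ \ge\ -\mathcal{I}(u^{(n)}).
\end{equation*}
Letting $n\to\infty$ yields $\liminf_{\varepsilon\to 0}\varepsilon\log\mu^{\varepsilon}_\delta(\mathcal{U})\ge -\mathcal{I}(v)$, and taking the supremum over all $v\in\mathcal{U}\cap\mathcal{C}$ (those with infinite rate contribute nothing) gives the claimed bound.

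\textbf{Concrete choice $\mathcal{C}=L^4$.} To see that one may take $\mathcal{C}=L^4([0,T]\times\mathbb{T}^3,\mathbb{R}^3)$, recall that $\mathcal{N}$ (defined via the Ladyzhenskaya--Prodi--Serrin condition (\ref{serrin})) is a weak-strong uniqueness class by Lemma \ref{weak-strong}, so Lemma \ref{lowerbound-1} applies with $\mathcal{C}_0=\mathcal{N}$; and Lemma \ref{l4 recovery} shows that every $v\in L^4$ with $\mathcal{I}(v)<\infty$ admits an approximating sequence in $\mathcal{N}$ witnessing $v\in\overline{\mathcal{N}}^{\mathcal{I}}$, while $v\in L^4$ with $\mathcal{I}(v)=\infty$ is irrelevant to the infimum. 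Thus $L^4\cap\{\mathcal{I}<\infty\}\subset\overline{\mathcal{N}}^{\mathcal{I}}$ and the argument above runs verbatim with $\mathcal{C}=L^4$.

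\textbf{Main obstacle.} There is essentially no analytic obstacle here: the content has already been done in Lemmata \ref{lowerbound-1}, \ref{weak-strong} and \ref{l4 recovery}, and this proposition is a two-line formal consequence of the lower-semicontinuous-envelope characterisation of the $\mathcal{I}$-closure in Definition \ref{def: wk stron uniqueness class}. The only points requiring (minor) care are the degenerate cases ($\mathcal{U}\cap\mathcal{C}=\varnothing$, or all competitors having infinite rate, or $\mathcal{U}\cap\mathcal{C}_0=\varnothing$ despite $\mathcal{U}\cap\mathcal{C}\ne\varnothing$), all of which are dispatched by the finite-rate reduction at the start.
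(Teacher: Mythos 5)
Your proposal is correct and follows essentially the same route as the paper: both arguments fix a (near-)optimal competitor $v\in\mathcal{U}\cap\mathcal{C}$ of finite rate, use the definition of the $\mathcal{I}$-closure together with openness of $\mathcal{U}$ to produce approximants $u^{(n)}\in\mathcal{U}\cap\mathcal{C}_0$ with $\mathcal{I}(u^{(n)})\to\mathcal{I}(v)$, and then invoke Lemma \ref{lowerbound-1}. The only cosmetic difference is that the paper phrases this as showing $\inf_{\mathcal{U}\cap\mathcal{C}_0}\mathcal{I}\le\inf_{\mathcal{U}\cap\mathcal{C}}\mathcal{I}$ via an $\vartheta$-argument, while you pass to the limit along the approximating sequence directly; the substance is identical.
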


\section{Upper bound of the Large Deviation Principle}\label{sec-7} We now prove the corresponding upper bound, and Theorem \ref{thm: improved ub}. Since the arguments are the very similar, we consolidate them into a single proposition.
\begin{proposition}[Large Deviations upper bound]\label{upresult-1}
 Let $\epsilon, \delta=\delta(\epsilon)$ be scaling parameters satisfying (\ref{scale-standing}), and for each $\epsilon>0$, let $\ED{\mu}$ be the law of a stochastic Leray solution $\ED{U}$ to (\ref{SNS-1}), starting from $\ED{U}(0)$ satisfying Assumption \ref{hyp: initial data}. Then, for any closed subset $\mathcal{V}$ of $\XX$, it holds that
\begin{equation*}
\limsup_{\epsilon\rightarrow0}\epsilon \log \ED{\mu}(\mathcal{V})\leq-\inf_{v\in \mathcal{V}}\max\left(\mathcal{I}(v), S(\mathcal{E}^\star(v))\right)
\end{equation*} where $\mathcal{E}^\star$ is defined by (\ref{eq: Estar}), and the function $S$ is constructed in Lemma \ref{lemma: improved UB'}.

\end{proposition}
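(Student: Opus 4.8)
\emph{Proof sketch.} The plan is to derive a Laplace-type upper bound from an exponential-martingale estimate for the dynamics combined with the static large deviations of the initial data, and then to pass to closed sets using the exponential tightness of Corollary~\ref{ET'} and the neighbourhood estimate of Lemma~\ref{lemma: improved UB'}. Fix $\varphi\in C^\infty_\mathrm{d.f.}([0,T]\times\mathbb{T}^3)$. Testing the weak formulation of Definition~\ref{weak-solution} against the time-dependent $\varphi$ shows that, $\mathbb{P}$-almost surely, $G^\varphi_T(\ED{U})=\sqrt{\epsilon}\int_0^T\langle\sqrt{Q_\delta}\nabla\varphi(s),dW(s)\rangle$, where $G^\varphi_T(u)$ is precisely the sum of the terms in~(\ref{Fu}) that are linear in $\varphi$. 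The right-hand side is a continuous martingale whose quadratic variation $\epsilon\|\sqrt{Q_\delta}\nabla\varphi\|_{L^2([0,T],\mathcal{M})}^2$ is bounded and deterministic for smooth $\varphi$, so $M^\varphi_t:=\exp(\epsilon^{-1}\sqrt{\epsilon}\int_0^t\langle\sqrt{Q_\delta}\nabla\varphi,dW\rangle-\tfrac12\epsilon^{-1}\|\sqrt{Q_\delta}\nabla\varphi\|_{L^2([0,t],\mathcal{M})}^2)$ is a genuine $(\mathcal{F}_t)$-martingale with $M^\varphi_0=1$. Setting $\Lambda_\delta(\varphi,u):=G^\varphi_T(u)-\tfrac12\|\sqrt{Q_\delta}\nabla\varphi\|_{L^2([0,T],\mathcal{M})}^2$, the identity $\mathbb{E}[M^\varphi_T]=1$ reads $\mathbb{E}[\exp(\epsilon^{-1}\Lambda_\delta(\varphi,\ED{U}))]=1$, and $\Lambda_\delta(\varphi,u)=\Lambda(\varphi,u)+r_\delta(\varphi)$ where $r_\delta(\varphi)$ is a deterministic constant with $r_\delta(\varphi)\to0$ as $\delta\to0$, since $\sqrt{Q_\delta}\nabla\varphi\to\nabla\varphi$ in $\mathcal{M}$.

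To bring in the initial data, observe that $\ED{U}(0)$ is $\mathcal{F}_0$-measurable while $W$ has increments independent of $\mathcal{F}_0$, so $\mathbb{E}[M^\varphi_T\mid\mathcal{F}_0]=1$; multiplying by $\exp(\epsilon^{-1}\psi(\ED{U}(0)))$ for $\psi\in C_b(H,\mathrm{w})$ and taking expectations gives $\mathbb{E}[\exp(\epsilon^{-1}(\Lambda_\delta(\varphi,\ED{U})+\psi(\ED{U}(0))))]=\mathbb{E}[\exp(\epsilon^{-1}\psi(\ED{U}(0)))]$. The exponential-moment hypothesis in Assumption~\ref{hyp: initial data} makes $\ED{U}(0)$ exponentially tight in the weak topology of $H$, so the upper-bound half of Varadhan's lemma, applied to the static large deviations in Assumption~\ref{hyp: initial data}, yields $\limsup_\epsilon\epsilon\log\mathbb{E}[\exp(\epsilon^{-1}\psi(\ED{U}(0)))]\le\mathcal{I}_0^\star(\psi)$. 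Combining and using $r_\delta\to0$, for every $\varphi$ and $\psi$ we obtain
\[ \limsup_{\epsilon\to0}\epsilon\log\mathbb{E}\Bigl[\exp\bigl(\tfrac1\epsilon(\Lambda(\varphi,\ED{U})+\psi(\ED{U}(0)))\bigr)\Bigr]\le\mathcal{I}_0^\star(\psi). \]

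From here the local upper bound is standard. Fix $v\in\XX$ and $\gamma,N>0$. Using the variational formula of Lemma~\ref{ske-lem}, together with the fact recorded in its proof that the suprema over $\varphi$ and $\psi$ decouple, choose $\varphi,\psi$ with $\Lambda(\varphi,v)+\psi(v(0))-\mathcal{I}_0^\star(\psi)\ge\min(\mathcal{I}(v),N)-\gamma$ (the case $\mathcal{I}(v)=\infty$ is handled by taking one of $\varphi,\psi$ to be $0$). Since $u\mapsto\Lambda(\varphi,u)$ is continuous on $\XX$ by Lemma~\ref{ratelem}, and $u\mapsto\psi(u(0))$ is continuous on $\XX$ because $u\mapsto u(0)$ maps continuously into $(H,\mathrm{w})$, there is an open $\mathcal{U}^{(1)}_v\ni v$ on which $\Lambda(\varphi,\cdot)+\psi(\cdot(0))>\min(\mathcal{I}(v),N)-2\gamma+\mathcal{I}_0^\star(\psi)$, and Chebyshev's inequality with the displayed bound gives $\limsup_\epsilon\epsilon\log\ED{\mu}(\mathcal{U}^{(1)}_v)\le-\min(\mathcal{I}(v),N)+2\gamma$. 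Intersecting with the open set $\mathcal{U}^\star_v\ni v$ produced by Lemma~\ref{lemma: improved UB'}, for which $\limsup_\epsilon\epsilon\log\ED{\mu}(\mathcal{U}^\star_v)\le-S(\mathcal{E}^\star(v))$, and using $\ED{\mu}(A\cap B)\le\min(\ED{\mu}(A),\ED{\mu}(B))$ together with the elementary inequality $\max(\min(a,N)-c,b)\ge\min(\max(a,b),N)-c$, the neighbourhood $\mathcal{U}_v:=\mathcal{U}^{(1)}_v\cap\mathcal{U}^\star_v$ satisfies $\limsup_\epsilon\epsilon\log\ED{\mu}(\mathcal{U}_v)\le-\min(\max(\mathcal{I}(v),S(\mathcal{E}^\star(v))),N)+2\gamma$. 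Finally, for a closed $\mathcal{V}\subset\XX$ and the compacts $\mathcal{K}_n$ of Corollary~\ref{ET'}, write $\ED{\mu}(\mathcal{V})\le\ED{\mu}(\mathcal{V}\cap\mathcal{K}_n)+\ED{\mu}(\mathcal{K}_n^{\mathrm c})$, cover the compact set $\mathcal{V}\cap\mathcal{K}_n$ by finitely many $\mathcal{U}_{v_i}$, use subadditivity of $\ED{\mu}$ (with $\epsilon\log$ of the number of sets tending to $0$), and let $\gamma\downarrow0$ and then $N,n\to\infty$ to conclude the stated bound.

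I expect the only genuinely delicate point to be the exponential-martingale identity and its coupling with the weak formulation: one must verify that $M^\varphi$ is a true martingale rather than merely a local one — which is immediate here, the quadratic variation being bounded and deterministic — and that Definition~\ref{weak-solution} may legitimately be tested against time-dependent $\varphi\in C^\infty_\mathrm{d.f.}([0,T]\times\mathbb{T}^3)$, so that $\sqrt{\epsilon}\int_0^T\langle\sqrt{Q_\delta}\nabla\varphi,dW\rangle$ really equals $G^\varphi_T(\ED{U})$. The remaining ingredients — the upper half of Varadhan's lemma for the initial data, the decoupling of the suprema in the variational formula, the merging of the two rate-function contributions via the $\min$/$\max$ arithmetic, and the covering argument — are routine.
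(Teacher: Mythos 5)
Your proposal is correct and follows essentially the same route as the paper's proof: an exponential (super)martingale built from the weak formulation tested against time-dependent $\varphi$, Varadhan's upper bound for the initial data, the variational formula of Lemma \ref{ske-lem} plus continuity of $\Lambda(\varphi,\cdot)$ and $\psi(\cdot(0))$ to get neighbourhood bounds, intersection with the sets from Lemma \ref{lemma: improved UB'}, and a compact covering via exponential tightness. The only cosmetic differences are that you keep the exact compensator $\|\sqrt{Q_\delta}\nabla\varphi\|^2$ (a true martingale, with the harmless $r_\delta$ correction) where the paper dominates it by $\|\nabla\varphi\|^2$ to get a supermartingale, and that you make the $\mathcal{I}(v)=\infty$ case explicit via the $N$-truncation, which the paper leaves implicit.
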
 Let us remark that this implies both the upper bound of Theorems \ref{thrm: LDP} and \ref{thm: improved ub}.
\begin{proof} Thanks to exponential tightness in Proposition \ref{exponential3}, it is sufficient to prove the estimate in the case when $\mathcal{V}\subset \XX$ is additionally compact.  Recalling the variational form of $\mathcal{I}$ in Lemma \ref{ratelem}, let us now fix $\varphi \in C^\infty_\mathrm{d.f.}([0,T]\times\mathbb{T}^3), \psi\in C_b(H,w)$ and define $\mathcal{N}^{\varphi}:[0,T]\times \XX \rightarrow\mathbb{R}$  by
\begin{align*}
\mathcal{N}^{\varphi}(t,u):=&\langle u(t),\varphi(t)\rangle-\langle u(0),\varphi(0)\rangle-\int_{[0,t]\times\mathbb{T}^3}\langle u,\partial_s\varphi\rangle ds dx +\int_{[0,t]\times \mathbb{T}^3}\langle\nabla u,\nabla\varphi\rangle dt dx\\
&\hspace{3cm}+\int_{[0,t]\times \mathbb{T}^3}\langle (u\cdot\nabla)u,\varphi\rangle dt dx.\end{align*}

From the criterion (\ref{weaksense}) in the Definition \ref{weak-solution} of stochastic Leray solutions, it follows that $\mathcal{N}^{\varphi,\psi}(t, \ED{U})=\mathcal{N}^{\varphi}(t,\ED{U})+\Lambda_0(\psi, \ED{U}(0))$ is a martingale with quadratic variation
\begin{equation}
\langle\mathcal{N}^{\varphi,\psi}(\cdot,\ED{U})\rangle(t)=\epsilon\int_0^t\|\sqrt{Q_{\delta}}\nabla\varphi\|_{\mathcal{M}}^2ds \le \epsilon\int_0^t\|\nabla\varphi\|_{\mathcal{M}}^2ds.
\end{equation}

We define a map $Q^{\epsilon}_{\varphi,\psi}:[0,T]\times \XX \rightarrow\mathbb{R}$ by
\begin{equation*}
Q^{\epsilon}_{\varphi,\psi}(t,u):=\exp\Big\{\epsilon^{-1}\left(\mathcal{N}^{\varphi}(t,u)+\Lambda_0(\psi, u(0))-\frac{1}{2}\int_0^t\|\nabla\varphi\|_{\mathcal{M}}^2ds\right)\Big\}
\end{equation*}

so that $Q^{\epsilon}_{\varphi,\psi}(\cdot,U^{\epsilon}_{\delta})$ is a supermartingale. Further, using the definition of $\Lambda_0, \mathcal{I}_0^\star$ and Varadhan's integral lemma, \begin{equation}\label{eq: Varadahn} \limsup_{\epsilon}\epsilon \log \mathbb{E}^\epsilon\left[Q^{\epsilon}_{\varphi,\psi}(0, \ED{U})\right]=0. \end{equation} Since $Q^{\epsilon}_{\varphi,\psi}(\cdot,U^{\epsilon}_{\delta})$ is a nonnegative supermartingale, we can use Chebyshev's inequality to see that, for any subset $\mathcal{U}\subset \XX$,
\begin{align*}
\mu^{\epsilon}_{\delta}(\mathcal{U})=\int_{\XX}I_{\{u\in \mathcal{U}\}}Q^{\epsilon}_{\varphi,\psi}(T,u)(Q^{\epsilon}_{\varphi,\psi}(T,u))^{-1}\mu^{\epsilon}_{\delta}(du)& \le \sup_{u\in \mathcal{U}}\left(Q^{\epsilon}_{\varphi,\psi}(T,u)^{-1}\right)\int_{\XX }Q^{\epsilon}_{\varphi,\psi}(T,u)\mu^{\epsilon}_{\delta}(du)\\
=&\sup_{u\in \mathcal{U}}\left(Q^{\epsilon}_{\varphi,\psi}(T,u)^{-1}\right)\mathbb{E}^{\epsilon}\left[Q^{\epsilon}_{\varphi,\psi}(T, u^\epsilon)\right]\end{align*}
whence, using (\ref{eq: Varadahn}),
\begin{align}\label{eq: UB with phi psi}
\limsup_{\epsilon\to 0}\epsilon \log \mu^{\epsilon}_{\delta}(\mathcal{U})  \le  - \inf_{u\in \mathcal{U}}\Big(\mathcal{N}^{\varphi}(T,u)+\Lambda_0(\psi, u(0))-\frac{1}{2}\int_0^T\|\nabla\varphi\|_{H}^2ds\Big).
\end{align}
Recalling the definition (\ref{Fu}), the $\varphi$-dependent terms $\mathcal{N}^\varphi(T,u)-\int_0^T\|\nabla \varphi\|_{\mathcal{M}}^2ds$ are exactly the definition of $\Lambda(\varphi, u)$. Given a trajectory $u\in \mathcal{V}$ and $\lambda>0$, we may first use Lemma \ref{ske-lem} to choose $\varphi, \psi$ so that $$\Lambda_0(\psi, u(0))+\Lambda(\varphi, u)>\mathcal{I}(u)-\lambda$$ and then use continuity of the maps $v\mapsto \Lambda_0(\psi, v(0)), \Lambda(\varphi, v)$ to choose an open $\mathcal{U}\ni u$ such that $$ \inf_{v\in \mathcal{U}}\left(\mathcal{N}^{\varphi}(T,v)+\Lambda_0(\psi, v(0))-\frac{1}{2}\int_0^T\|\nabla\varphi\|_{H}^2ds\right)>\mathcal{I}(u)-2\lambda. $$ In particular, we may combine with Lemma \ref{lemma: improved UB'} to find, for any $u$ and any $\lambda>0$ an open $\mathcal{U}(u)\ni u$ such that \begin{equation}
	\label{eq: LUB} \limsup_{\epsilon\to 0}\epsilon \log \ED{\mu}(\mathcal{U}(u))\le -\max\left(\mathcal{I}(u)-2\lambda, S(\mathcal{E}^\star(u))\right) \le -\max\left(\mathcal{I}(u), S(\mathcal{E}^\star(u))\right)+2\lambda.
\end{equation} Since $\mathcal{V}$ is compact, we may cover $\mathcal{V}$ with finitely many $\mathcal{U}(u)$ with $u$ running over a finite subset of $\mathcal{V}$. Keeping only the most favourable exponent, it follows that \begin{equation}
	\limsup_{\epsilon\to 0} \epsilon \log \ED{\mu}(\mathcal{V})\le -\inf \left(\max\left(\mathcal{I}(u), S(\mathcal{E}^\star(u))\right): u\in F\right)+2\lambda.
\end{equation} Since $\lambda>0$ was arbitrary, we may now send $\lambda\to 0$, and the proof is complete.
\end{proof}

\section{From Large Deviations to the Energy Equality}\label{sec: energy} We now give the proof of Theorem \ref{thrm: Energy Equality} and Remark \ref{rmk: violations of EI}. The relationship sharpness of the large deviation bounds to the energy equality arises because, in the stationary and reversible initial condition $\EDM{U}(0)\sim \cG(0, \epsilon I_m/2)$, $\delta=0, m(\epsilon)\ll \epsilon^{-\frac15}$, the energy $\|u(0)\|_{H}^2$ appears as the rate function for the large deviations of the initial data $\EDM{U}(0)$, see Lemma \ref{lemma: exponential calculation ID}. The key point is, by the time-symmetry, $\mathfrak{T}_T{U}^\epsilon_{0,m}$ have the same large deviations as ${U}^\epsilon_{0,m}$, and that, on the time-reversal of paths where the upper and lower bound match, we obtain an identity \begin{equation*}
\mathcal{I}(u)=\mathcal{I}(\mathfrak{T}_Tu)\end{equation*} which can be manipulated into the claimed energy \emph{equality}. This probabilistic argument has been used in other contexts by the first and second authors  \cite[Section 13]{GH}. For the main result Theorem \ref{thrm: Energy Equality}, we extract out the elements of the proof to give a shorter argument which does not use any probability theory; we will use the probabilistic argument above in the course of proving Remark \ref{rmk: violations of EI}. We begin with a short lemma which characterises trajectories in the class $\mathcal{C}$ in the theorem. \begin{lemma}\label{lemma: GEI on cR} Let $\mathcal{C}$ be as in Theorem \ref{thrm: Energy Equality}. Then, for all $u\in \mathcal{C}$ with $\mathcal{I}(u)<\infty$, there exists $g\in L^2([0,T],\mathcal{M})$ such that $u$ solves (\ref{control}), and additionally satisfies, for all $0\le t\le T$, the generalised energy \emph{in}equality on $[0,t]$:
  	\begin{equation}
	\label{eq: general energy inequality intervals} \frac12 \|u(t)\|_H^2+\int_0^t \|u(s)\|_V^2 ds \le  \frac12 \|u(0)\|_H^2+\int_0^t\langle \nabla u, g\rangle ds.
\end{equation}
  \end{lemma}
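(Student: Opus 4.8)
\textbf{Proof plan for Lemma \ref{lemma: GEI on cR}.} The plan is to obtain the trajectory $u$ together with its generalised energy inequality as a limit point of the tilted processes $\ED{V}$ from Lemma \ref{lemma: existence of tilted}, exactly as in the proof of Lemma \ref{stationary}, and then upgrade the conclusion from the closure $\mathcal{C}$ to the original class by the lower-semicontinuity of the two sides of the energy inequality along $\mathcal{I}$-convergent sequences. First I would recall that $\mathcal{C}=\overline{\mathcal{C}_0}^{\mathcal{I}}$ for a weak-strong uniqueness class $\mathcal{C}_0$, so it suffices to establish two things: (i) every $u\in \mathcal{C}_0$ with $\mathcal{I}(u)<\infty$ satisfies (\ref{eq: general energy inequality intervals}) for the minimising control $g$; and (ii) the set of trajectories satisfying (\ref{eq: general energy inequality intervals}) is stable under $\mathcal{I}$-convergence, in the sense of (\ref{eq: def closure}).

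For (i), fix $u\in\mathcal{C}_0$ with $\mathcal{I}(u)<\infty$ and let $g\in L^2([0,T],\mathcal{M})$ be the optimal control from Lemma \ref{ske-lem}, so $u$ solves (\ref{control}) $=$ (\ref{scontrol-0}) without noise. Apply Lemma \ref{lemma: existence of tilted} with this $g$ and with initial data $\ED{V}(0)$ constructed so that $\ED{V}(0)\to u(0)$ in norm (as at (\ref{eq: ent initial})), obtaining stochastic Leray solutions $\ED{V}$ to (\ref{scontrol-0}) with the trajectorial energy estimate (\ref{eq: energy ineq for V}). Now run the argument of Lemma \ref{stationary}: tightness of $\ED{V}$ in $\XX$ (via the entropy bound (\ref{eq: entropy controlled solution}) and the exponential tightness of Corollary \ref{ET'}(ii)), Skorokhod representation, Step 1 identifying any limit $V$ as a weak solution of (\ref{eq: sk}) with $V(0)=u(0)$, and Step 2 passing to the limit in (\ref{eq: energy ineq for V'}) to obtain, for all $0\le t\le T$, the energy inequality (\ref{energy-2'}). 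Since $u\in\mathcal{C}_0$ and $V$ satisfies (\ref{energy-2'}), weak-strong uniqueness (Definition \ref{def: wk stron uniqueness class}) forces $V=u$, and so $u$ itself satisfies (\ref{eq: general energy inequality intervals}). The only point needing a word of care is that the limit energy inequality in Lemma \ref{stationary} is stated at the terminal time; but the argument at (\ref{eq: keeping terms EI})--(\ref{control term energy ineq}) applies verbatim with $t$ in place of $T$ (lower semicontinuity of $\|V(t)\|_H^2$ and $\int_0^t\|V(s)\|_V^2ds$ on the left, convergence of the initial energy, of the martingale $M^\epsilon$ uniformly on $\mathfrak{s}$, of the It\^o correction, and of the control term $\int_0^t\langle\nabla V^\epsilon,\sqrt{Q_\delta}g\rangle ds$), and the removal of the restriction $t\in\mathfrak{s}$ is exactly as following (\ref{eq: pwenergy}).

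For (ii), let $u\in\mathcal{C}$ with $\mathcal{I}(u)<\infty$, and pick $u^{(n)}\in\mathcal{C}_0$ with $u^{(n)}\to u$ in $\XX$ and $\mathcal{I}(u^{(n)})\to\mathcal{I}(u)$; by (i) each $u^{(n)}$ obeys (\ref{eq: general energy inequality intervals}) for its optimal control $g^{(n)}$. By Lemma \ref{lemma: convergence of g}, $g^{(n)}\to g$ strongly in $L^2([0,T],\mathcal{M})$, where $g$ is the optimal control for $u$. Fix $t\in[0,T]$. On the right of (\ref{eq: general energy inequality intervals}), $\frac12\|u^{(n)}(0)\|_H^2\to\frac12\|u(0)\|_H^2$ since convergence in $\XX$ includes $u^{(n)}(0)\to u(0)$ weakly in $H$ and, via the continuity of $\mathcal{I}_0$ together with $\mathcal{I}_0(u^{(n)}(0))\to\mathcal{I}_0(u(0))$ (Lemma \ref{ratelem}), actually in norm; and $\int_0^t\langle\nabla u^{(n)},g^{(n)}\rangle ds\to\int_0^t\langle\nabla u,g\rangle ds$ by the weak convergence $\nabla u^{(n)}\rightharpoonup\nabla u$ in $L^2([0,t],\mathcal{M})$ against the strongly convergent $g^{(n)}\to g$. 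On the left, $\|u(t)\|_H^2\le\liminf_n\|u^{(n)}(t)\|_H^2$ and $\int_0^t\|u(s)\|_V^2ds\le\liminf_n\int_0^t\|u^{(n)}(s)\|_V^2ds$ by lower semicontinuity of these functionals in the topology of $\XX$ (as already used in Lemma \ref{ratelem}). Passing to the limit inferior in (\ref{eq: general energy inequality intervals}) for $u^{(n)}$ yields (\ref{eq: general energy inequality intervals}) for $u$ at this $t$, and since $t$ was arbitrary the proof is complete.

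\textbf{Main obstacle.} The genuinely delicate point is (i): the energy inequality for the limit $V$ in Lemma \ref{stationary} must be obtained with the correct sign at \emph{each} time $t$, which requires the one-sided nature of the arguments at (\ref{eq: keeping terms EI})--(\ref{control term energy ineq}) to be preserved after localising in time and then extended off the null set $[0,T]\setminus\mathfrak{s}$. Once weak-strong uniqueness has been invoked to identify $V=u$, the fact that $u$ inherits (\ref{eq: general energy inequality intervals}) is automatic, and (ii) is a routine semicontinuity argument powered by Lemma \ref{lemma: convergence of g}.
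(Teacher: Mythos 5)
Your proposal follows essentially the same route as the paper: for $u\in\mathcal{C}_0$ you construct, via Lemma \ref{lemma: existence of tilted} and the limiting argument of Lemma \ref{stationary} (Remark \ref{rmk: existence skeleton}), a solution of the skeleton equation with the optimal $g$ satisfying the energy inequality and then invoke weak-strong uniqueness, and for $u\in\mathcal{C}$ you pass to the limit along an $\mathcal{I}$-convergent sequence using Lemma \ref{lemma: convergence of g} together with lower semicontinuity on the left and the weak-times-strong convergence of $\int_0^t\langle\nabla u^{(n)},g^{(n)}\rangle$ on the right, exactly as in the paper's proof. The one place you go beyond the paper — the claim that $\mathcal{I}_0(u^{(n)}(0))\to\mathcal{I}_0(u(0))$ upgrades the weak convergence of $u^{(n)}(0)$ to norm convergence, which is indeed needed to control the initial energy on the right-hand side — is correct for the quadratic rate functions of Lemma \ref{lemma: exponential calculation ID} but does not follow from mere norm-continuity of a general $\mathcal{I}_0$ under Assumption \ref{hyp: initial data}; the paper leaves this point implicit, so your treatment is, if anything, more explicit at the same level of rigour.
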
 Let us remark that this is already not obvious from the definition, since the notion of solutions to (\ref{control}) in the definition of $\mathcal{I}$ does not impose such an energy inequality, and the hypotheses on $\mathcal{C}_0, \mathcal{C}$ concern only uniqueness. \begin{proof}
  	The finiteness of $\mathcal{I}(u)$ implying that $u$ solves (\ref{control}) for some $g$ is already part of the content of Lemma \ref{ske-lem}, and we only need to argue (\ref{eq: general energy inequality intervals}); throughout the rest of the proof, we will fix the optimal such $g$. We first argue in the special case $u\in \mathcal{C}_0$, and then extend to $u\in \mathcal{C}$. \paragraph{\textbf{Case 1. $u\in \mathcal{C}_0$}} Let us first suppose that $u\in \mathcal{C}_0$ and $\mathcal{I}(u)<\infty$. In this case, we can directly follow the arguments of Lemma \ref{lemma: existence of tilted} as in Remark \ref{rmk: existence skeleton} to construct solutions $v(t)$ to (\ref{scontrol-0}) for our choice of $g$ and with initial data $v(0)=u(0)$ converging to $u(0)$ and which further satisfy the energy inequality (\ref{eq: general energy inequality intervals}) as desired. Since $\mathcal{C}_0$ is a weak-strong uniqueness class, it follows that $v=u$, and so $u$ satisfies the energy inequality (\ref{eq: general energy inequality intervals}). \paragraph{\textbf{Case 2. $u\in \mathcal{C}$}} We now deal with the more general case stated in the lemma, in which we only assume that $u\in \mathcal{C}$. By definition of $\mathcal{C}= \overline{\mathcal{C}_0}^{\mathcal{I}}$, it follows that there exists a sequence $u^{(n)}\in \mathcal{C}_0$ with $u^{(n)}\to u$ in the topology of $\XX$ and $\mathcal{I}(u^{(n)})\to \mathcal{I}(u)$. This is the setting of Lemma \ref{lemma: convergence of g}, and so we see that the optimal controls $g^{(n)}$ associated to $u^{(n)}$ converge in $L^2([0,T],\mathcal{M})$ to the optimal control $g$ for $u$. For each $n$, case 1 shows that the inequality (\ref{eq: general energy equality}) holds for $u^{(n)}, g^{(n)}$, and we take a limit, using the same arguments as in Proposition \ref{existence} and Lemma \ref{stationary}, now in a deterministic setting and using $$ \sup_{t\le T} \left|\int_0^t \langle \nabla u^{(n)}, g^{(n)}\rangle ds -  \int_0^t \langle \nabla u, g\rangle ds\right| \to 0 $$ thanks to the weak convergence $\nabla u^{(n)}\to \nabla u$ and the strong convergence $g^{(n)}\to g$. \end{proof}
\begin{proof}[Proof of Theorem \ref{thrm: Energy Equality}] Let $\mathcal{C}_0, \mathcal{C}, \mathcal{R}$ be as given, and let $u\in \mathcal{R}$ be of finite rate. By Lemma \ref{lemma: GEI on cR}, there exists $g\in L^2([0,T],\mathcal{M})$ such that $u$ solves (\ref{control}) with the energy inequality (\ref{eq: general energy equality}). We now observe that the time-reversal $v:=\mathfrak{T}_T u\in \mathcal{R}$, since $\mathcal{R}$ was assumed to be time-symmetric, and solves (\ref{control}) for the new choice \begin{equation}
  	\label{eq: reversed control} g_\mathrm{r}(t,x):={g}(T-t,x)+2\nabla v(t,x)
  \end{equation} which belongs to $L^2([0,T],\mathcal{M})$ thanks to the regularity $u\in \XX$. By Lemma \ref{lemma: GEI on cR} again, $v$ satisfies the energy inequality \begin{equation}
  	\frac12\|v(t)\|_H^2+\int_0^t\|v(s)\|_V^2ds \le \frac12 \|v(0)\|_H^2+\int_0^t \langle \nabla v, g_r\rangle ds.
  \end{equation} Using the definitions of $v$, $g_\mathrm{r}$ and taking $t=T$ in the previous display, we find \begin{equation}
  	\frac12\|u(0)\|_H^2+\int_0^T\|u(s)\|_V^2ds \le \frac12 \|u(T)\|_H^2-\int_0^T \langle \nabla u, g\rangle ds + 2\int_0^T\|u(s)\|_V^2 ds
  \end{equation} and combining with the usual energy inequality, the conclusion follows.\end{proof} We now give the proof of Remark \ref{rmk: violations of EI}. \begin{proof}
  As in the statement of the result, we consider the large deviations of $\EOM{U}$, started in the invariant distribution $\EOM{U}(0)\sim \mathcal{G}(0, \epsilon P_m/2)$, under the scaling (\ref{scale-standing-galerkin}) By Lemma \ref{lemma: exponential calculation ID}, Assumption \ref{hyp: initial data} holds with $\mathcal{I}_0(u(0))=\|u(0)\|_H^2$. For both items i, ii), we will argue the contrapositive. For item i), suppose that $u$ is a weak solution to (\ref{control}) for some (optimal) $g\in L^2([0,T],\mathcal{M})$, such that the lower bound holds at $u$: \begin{equation}
  		\label{eq: loc LB} \inf_{\mathcal{U}\ni u}\liminf_{\epsilon\to 0} \epsilon \log \EOM{\mu}(\mathcal{U})\ge -\mathcal{I}(u). \end{equation} Recalling that the topology of $\XX$ is separable and Haussdorf, let $\mathcal{U}_n, n\in \mathbb{N}$ be a decreasing sequence of open sets such that $\cap_n \overline{\mathcal{U}}_n=\{u\}$. Using the time-reversal property of $\EOM{U}$, it follows that $(\mathfrak{T}_T)_\#\EOM{\mu}=\EOM{\mu}$, whence, for all $n$, \begin{equation}
  			\begin{split}
  				-\mathcal{I}(u) & \le \liminf_{\epsilon\to 0} \epsilon \log \EOM{\mu}(\mathcal{U}_n) \\ & \le \limsup_{\epsilon\to 0} \epsilon \log ((\mathfrak{T}_T)_\#\EOM{\mu})(\overline{\mathcal{U}}_n) \\&  =  \limsup_{\epsilon\to 0} \epsilon \log (\EOM{\mu})(\mathfrak{T}_T \overline{\mathcal{U}}_n) \le -\inf\left\{\mathcal{I}(v): \mathfrak{T}_Tv\in \overline{\mathcal{U}}_n\right\}
  			\end{split}
  		\end{equation} where the final line follows from the global upper bound in Theorem \ref{thrm: LDP}. Thanks to the lower semicontinuity of $\mathcal{I}$, the infimum on the right-hand side converges to $\mathcal{I}(\mathfrak{T}_Tu)$ as $n\to \infty$, and we conclude that \begin{equation}
  			\label{eq: estimate on RF reversal} \mathcal{I}(u)\ge \mathcal{I}(\mathfrak{T}_Tu).
  		\end{equation} To compute $\mathcal{I}(\mathfrak{T}_Tu)$, we observe that if $g$ is optimal for $u$, then the new choice $$g_\mathrm{r}(t,x):=g(T-t,x)-2\nabla u(T-t,x)$$ chosen at (\ref{eq: reversed control}) is also optimal thanks to Remark \ref{rmk: optimal g}, and $\mathcal{I}_0(\mathfrak{T}_Tu(0))=\|u(T)\|_H^2$. Expanding out (\ref{eq: estimate on RF reversal}), we have \begin{equation} \begin{split}
  			\mathcal{I}(\mathfrak{T}_Tu)&=\|u(T)\|_H^2+\frac12\|g\|_{L^2([0,T],\mathcal{M})}^2-2\int_0^T\langle \nabla u, g\rangle dt + 2 \int_0^T\|u(t)\|_V^2 dt \\ & \le \mathcal{I}(u)=\|u(0)\|_H^2+\frac12 \|g\|_{L^2([0,T],\mathcal{M})}^2. \end{split}
  		\end{equation} Subtracting the term in $g$ produces the energy inequality at time $T$, and we have proven the contrapositive of the claim in part i) of the remark. Part ii) follows from a further time-reversal, since if $u$ is a solution to (\ref{control}) for which the energy inequality is strict, then the time-reversal $u:=\mathfrak{T}_Tv$ is a solution to (\ref{control}) for the new choice $g_\mathrm{r}$ which violates the energy inequality.
  \end{proof}

\section{Large Deviations of the strong solution}\label{sec-9}

In this section, we will discuss the large deviation properties of the \emph{stochastic strong solution} and the \emph{strong} solution to the Navier-Stokes equations \begin{equation}\label{DNSE}
\partial_t\overline{u}=-A\overline{u}-\mathbf{P}(\overline{u}\cdot\nabla)\overline{u}.
\end{equation} \subsection{Definition of Strong Solutions} We begin with a definition. \begin{definition}\label{strong-solution}(Local in time strong solution)
Let $\epsilon>0$, $U_0\in L^2(\Omega;{V})$. A couple $(\overline{U}^{\epsilon},\tau^{\epsilon})$ is a strong solution of  (\ref{SNS-1}) with initial data $U_0$, if

\item[(i)]$\tau^{\epsilon}$ is a stopping time with respect to $\{\mathcal{F}(t)\}_{t\in[0,T]}$ such that $\tau^{\epsilon}\in(0,T]$, $\mathbb{P}$-almost surely.
	\item[(ii)] $\mathbb{P}$-almost surely,
\begin{align*}
\overline{U}^{\epsilon}\in C([0,\tau^{\epsilon});H^1(\mathbb{T}^3))\cap L^2_{\mathrm{loc}}([0,\tau^{\epsilon});H^2(\mathbb{T}^3)) \end{align*} {and, $\mathbb{P}$-almost surely, ${\rm{div}} \overline{U}(t)=0$ for all $0\le t\le \tau^\epsilon$.}
\item[(iii)] For some $C=C(T)>0$, $\overline{U}^{\epsilon}$ satisfies the estimate
\begin{align*}
\mathbb{E}\Big(\sup_{t\in[0,T]}\|\overline{U}^{\epsilon}(t\wedge\tau^{\epsilon})\|_{L^2(\mathbb{T}^3)}\Big)^2+\mathbb{E}\int^{T\wedge\tau^{\epsilon}}_0\int_{\mathbb{T}^3}\| \overline{U}^{\epsilon}(s)\|_{V}^2ds\leq C(T)\cdot(\mathbb{E}\|U_0\|_{L^2(\mathbb{T}^3)}^2+1). 	
\end{align*}

\item[(iv)] $\mathbb{P}$-almost surely, for every $\varphi\in C^{\infty}_\mathrm{d.f.}([0,T]\times\mathbb{T}^3)$ and all $0\le t\le T$,
\begin{equation} \begin{split} & \langle \overline{U}^{\epsilon}(t\wedge\tau^{\epsilon}),\varphi\rangle+\int_0^{t\wedge\tau^{\epsilon}}\langle A \overline{U}^{\epsilon}(s),\varphi\rangle ds\\ & \hspace{3cm}= \langle U_0,\varphi(0)\rangle-\int_0^{t\wedge\tau^{\epsilon}}\langle(\overline{U}^{\epsilon}\cdot\nabla)\overline{U}^{\epsilon},\varphi\rangle ds+\sqrt{\epsilon}W_{t\wedge\tau^{\epsilon}}\Big(\sqrt{Q_{\delta}}\nabla\varphi\Big).
\end{split} \end{equation}
In the same way, we use the terminology `local strong solution' for a divergence-free vector field $(\overline{u}(t): 0\le t\le \tau)$ solving (\ref{DNSE}) and additionally satisfying $\overline{u}\in C([0,\tau),H^1(\mathbb{T}^3))\cap L^2_{\mathrm{loc}}([0,\tau),H^2(\mathbb{T}^3))$.
\end{definition}

The existence of local-in-time strong solutions to (\ref{SNS-1}) and (\ref{DNSE}) is established in \cite[Theorem 2.3]{DA} and \cite[Theorem 1.2]{KJ}. In particular, in \cite{KJ}, the author dealt with the local in time strong solution of the stochastic Navier-Stokes equations on $\mathbb{R}^3$ with fractional differentiable initial data $U_0\in H^{\frac12+\alpha}(\mathbb{R}^3)$, for the case of $\alpha\in(0,\frac{1}{2})$. By following the proof of \cite[Theorem 1.2]{KJ} line by line, one can show that, on a sufficiently rich probability space, there exists a stochastic strong solution. Furthermore, using similar arguments to those in Lemma \ref{weak-strong} or in Lemma \ref{lemma: wsuniqueness} below, one can show that two strong solutions with the same initial data agree so long as both exist, which implies the existence of a \emph{maximal} strong solution $(\overline{U}^\epsilon, \tau^\epsilon)$ in the sense that all other strong solutions are initial segments of it. \subsection{\textbf{A Space $\XX_{\rm{st,loc}}$ of regular local paths}}\label{subsec: XXstloc} We now define a topological space $\XX_{\rm st, loc}$ of regular partial trajectories in order to describe the large deviations of $\overline{U}^\epsilon$. For any $0<\tau\le T$, let $\XX_{{\rm st},\tau}$ be the paths regular on $[0,\tau)$: $$ \XX_{{\rm st},\tau}:=C([0,\tau),H^1(\mathbb{T}^3))\cap L^2_{\rm loc}([0,\tau),H^2(\mathbb{T}^3)).$$  We then set \begin{equation}\label{eq: def XXstloc} \XX_{\rm st,loc}:=\left\{\star\right\}\sqcup \bigsqcup_{\tau\in (0,T]} \XX_{{\rm st},\tau}\end{equation} where we have included a trivial trajectory $\star$, defined only on $0$ time. As a convention, we will understand the time of existence $\tau=\tau(u)$ to be part of the data of an element $\overline{u} \in \sqcup_{\tau\in (0,T]} \XX_{{\rm st},\tau}$. Finally, we write $\preceq$ for the partial order on $\XX_{\rm st, loc}$ for partial segments: $\overline{v}\preceq \overline{u}$ if $\tau(\overline{v})\le \tau(\overline{u})$ and $\overline{v}(t)=\overline{u}(t)$ for all $0\le t<\tau(\overline{v})$.   \medskip \\ We equip the space $\XX_{\rm st, loc}$ with a family of functions which describe the (potential) loss of regularity close to the maximum time of definition. For every $M>0$, let $\tau_M: \sqcup_{0<\tau\le T}\XX_{{\rm st},\tau}\rightarrow [0,T]$ be defined by
\begin{align}\label{stoppingtimemap}
	\tau_{M}(\overline{u}):=&\inf\Big\{t\in[0,T]:\|\overline{u}(t)\|_{H^1(\mathbb{T}^3)}\geq M,\ \text{or }\|\overline{u}\|_{L^2([0,t];H^2(\mathbb{T}^3))}\geq M\Big\}\wedge T
\end{align}
and define the $H^1(\mathbb{T}^3)$-$H^2(\mathbb{T}^3)$ blow-up time by
\begin{align}\label{explosiontime}
\tau^\star(\overline{u}):=&\lim_{M\rightarrow+\infty}\tau_{M}(\overline{u}).
\end{align} We set $\tau_M(\star)=\tau^\star(\star)=0$. By an abuse of notation, we will use the same notation for the functions $\tau_M, \tau^\star$ defined in the same way on $\XX$ with $u$ in place of $\overline{u}$. For all elements of $\XX_{\rm st,loc}$, it holds that $\tau(\overline{u})\le \tau^\star(\overline{u})$, but this inequality may be strict, for instance by truncating a function strictly before its blowup time. We briefly remark that the maximality of the strong solution implies that $\tau(\ED{\overline{U}})=\tau^\epsilon=\tau^\star(\ED{\overline{U}})$ almost surely. \paragraph{\textbf{Topology of $\XX_{\rm st, loc}$}} We now specify a topology on $\XX_{\rm st,loc}$. First, we write $\mathfrak{t}_{{\rm st},1}$ for the locally convex topology on $\XX_{{\rm st},1}$ induced by the seminorms\footnote{The choice of topology is somewhat arbitrary; the conclusions of the theorem would remain true so long as $\mathfrak{t}_{{\rm st},1}$ is stronger than the topology induced by the same seminorms as defining $\XX$ on compact time intervals $[0,t],t<1$, and is weak enough for the resulting topology on $\XX_{\rm st, loc}$ to be separable. This topology is chosen for concreteness, as it is the natural one with which to equip $\XX_{{\rm st},1}$.} $$ \overline{u}\mapsto \sup_{s\le t}\|\overline{u}(s)\|_{H^1(\mathbb{T}^3)}; \qquad \overline{u}\mapsto \left(\int_0^t \|\overline{u}(s)\|_{H^2(\mathbb{T}^3)}^2ds\right)^{1/2};\qquad t<1. $$ For $u\in \XX_{\rm st, loc}$ with $\tau(u)>\tau$, we define $u^\tau:=(u(s\tau):0\le s<1)\in \XX_{{\rm st},1}$. We define a base of open neighbourhoods of $\overline{u}\neq \star$ by setting \begin{equation}\label{eq: basic opens} \overline{\mathcal{U}}:=\left\{\overline{v}\in \XX_{\rm st, loc}: \tau(\overline{v})>\tau', \hspace{0.2cm}v^{\tau'}\in \overline{\mathcal{V}}, \hspace{0.2cm} \min(\tau_M(\overline{v}),\tau(\overline{v}))<\tau(\overline{u})+\delta  \right\} \end{equation} where $\tau'$ ranges over $(0, \tau(\overline{u}))$, $\overline{\mathcal{V}}$ runs over $\mathfrak{t}_{{\rm st},1}$-neighbourhoods of $u^{\tau'}$, $\delta>0$ and $M>0$. We take a set of open neighbourhoods of $\star$ to be $\{\overline{v}:\min(\tau(\overline{v}),\tau_M(\overline{v}))<\epsilon\}, \epsilon>0$. This topology is chosen so that $\overline{u}^n\to \overline{u}\neq \star$ if and only if $ \liminf_n \tau(\overline{u}^n)\ge \tau(\overline{u})$ and if $\overline{u}^n\to \overline{u}$ in $C_tH^1_{x}\cap L^2_{t}H^2_x$ on compact time intervals in $[0,\tau(\bar{u}))$ but on no larger time intervals. The sequences converging to the trivial path $\star$ are characterised by the property  that $\min(\tau(\overline{u}^n),\tau_M(\overline{u}^n))\to 0$ for any fixed $M$. The following basic properties can easily be checked, which allows us to use the entropy method Lemma \ref{entropymethod}. \begin{lemma}The topological space $\XX_{\rm st, loc}$ is separable and Hausdorff. \end{lemma}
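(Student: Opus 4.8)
The plan is to verify the two point-set topological properties directly from the definition of the base of open sets given in (\ref{eq: basic opens}), treating separability and the Hausdorff property in turn.

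\textbf{Separability.} First I would observe that $\XX_{\rm st, loc}$ is the disjoint union of the point $\{\star\}$ and the spaces $\XX_{{\rm st},\tau}$ for $\tau\in(0,T]$, but the topology is \emph{not} the disjoint-union topology: by (\ref{eq: basic opens}) a neighbourhood of $\overline{u}$ with $\tau(\overline{u})=\tau_0$ contains partial paths with strictly larger existence time whose restriction to $[0,\tau']$ is $d_{{\rm st},\tau'}$-close, for $\tau'<\tau_0$. The key reduction is that for \emph{rational} $\tau$ the metric space $\XX_{{\rm st},\tau}=C([0,\tau),H^1_{\mathrm w})\cap L^2_{\mathrm w}([0,\tau),H^2)$ is separable (it is a countable intersection over compact subintervals $[0,\tau-1/k]$ of Polish-type spaces with weak topologies restricted to bounded sets, which are metrisable and separable). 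I would then take a countable dense set $D_\tau\subset \XX_{{\rm st},\tau}$ for each rational $\tau\in(0,T]$, together with the point $\star$, and set $D:=\{\star\}\cup\bigcup_{\tau\in\mathbb Q\cap(0,T]} D_\tau$. To see $D$ is dense, fix $\overline{u}$ with $\tau(\overline{u})=\tau_0$ and a basic open set $\overline{\mathcal U}$ of the form (\ref{eq: basic opens}) with parameters $\tau'<\tau_0$, $\epsilon,\delta,M$; choose a rational $\sigma\in(\tau',\min(\tau_0+\delta,T))$ with $\sigma<\tau_0$ when possible — actually it suffices to pick $\sigma\in(\tau',\tau_0)$ rational so that any $\overline v\in D_\sigma$ close to $\overline u|_{[0,\sigma]}$ automatically has $\tau(\overline v)=\sigma\in(\tau',\tau_0+\delta)$ and $\min(\tau(\overline v),\tau_M(\overline v))\le\sigma<\tau_0+\delta$ — and use density of $D_\sigma$ in $\XX_{{\rm st},\sigma}$ to find $\overline v\in D_\sigma$ with $d_{{\rm st},\tau'}(\overline u|_{[0,\tau']},\overline v|_{[0,\tau']})<\epsilon$. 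For neighbourhoods of $\star$ one notes directly that any $\overline v\in D_\sigma$ with $\sigma$ rational and small enough lies in $\{\overline v:\min(\tau(\overline v),\tau_M(\overline v))<\epsilon\}$. Hence $D$ is a countable dense subset.

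\textbf{Hausdorff.} I would split into cases according to whether the two points to be separated equal $\star$. If $\overline u\neq\star$ with $\tau(\overline u)=\tau_0>0$, pick $M$ with $\tau_M(\overline u)>\tau_0/2$ (possible since $\tau^\star(\overline u)\ge\tau(\overline u)=\tau_0$ and $\tau_M\uparrow\tau^\star$, so for some $M$ one has $\tau_M(\overline u)\ge 3\tau_0/4$); then $\{\overline v:\min(\tau(\overline v),\tau_M(\overline v))<\tau_0/2\}$ is an open neighbourhood of $\star$ not containing $\overline u$, while a basic neighbourhood (\ref{eq: basic opens}) of $\overline u$ with $\delta<\tau_0/2$ consists only of $\overline v$ with $\min(\tau(\overline v),\tau_M(\overline v))\ge ?$ — here I must be a little careful, so the cleaner choice is to use $\tau'\in(\tau_0/2,\tau_0)$ in the neighbourhood of $\overline u$, which forces $\tau(\overline v)>\tau'>\tau_0/2$ for all $\overline v$ in it, hence disjoint from the chosen neighbourhood of $\star$. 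For two points $\overline u\neq\overline v$ both distinct from $\star$: if $\tau(\overline u)\neq\tau(\overline v)$, say $\tau(\overline u)<\tau(\overline v)$, take in (\ref{eq: basic opens}) for $\overline v$ a parameter $\tau'\in(\tau(\overline u),\tau(\overline v))$, which excludes $\overline u$ and anything with existence time $\le\tau(\overline u)$, and separately take for $\overline u$ a neighbourhood with $\delta$ so small that $\min(\tau(\cdot),\tau_M(\cdot))<\tau(\overline u)+\delta<\tau'$, which excludes every path with existence time $\ge\tau'$. If $\tau(\overline u)=\tau(\overline v)=\tau_0$ but $\overline u\neq\overline v$ as elements of $\XX_{{\rm st},\tau_0}$, then since $d_{{\rm st},\tau'}$ is a metric on restrictions and $\overline u|_{[0,\tau']}\neq\overline v|_{[0,\tau']}$ for $\tau'$ close enough to $\tau_0$ (the restrictions agree for all $\tau'<\tau_0$ only if $\overline u=\overline v$), pick such a $\tau'$ and radius $\epsilon<\tfrac12 d_{{\rm st},\tau'}(\overline u|_{[0,\tau']},\overline v|_{[0,\tau']})$ to obtain disjoint basic neighbourhoods.

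\textbf{Main obstacle.} The routine parts are genuinely routine; the one point needing care is the bookkeeping in the definition (\ref{eq: basic opens}), specifically the interplay of the three constraints (closeness on $[0,\tau']$, lower bound $\tau(\overline v)>\tau'$, and upper bound $\min(\tau(\overline v),\tau_M(\overline v))<\tau(\overline u)+\delta$): one must consistently choose the auxiliary time $\tau'$ strictly between the relevant existence times and the auxiliary level $M$ large enough that $\tau_M$ of the reference path exceeds the threshold one wants, using $\tau^\star\ge\tau$ and the monotone convergence $\tau_M\uparrow\tau^\star$. For separability, the only genuinely non-trivial input is the separability of each $\XX_{{\rm st},\tau}$ itself, which follows because the weak topologies on $H^1,H^2$ restricted to norm-bounded sets are metrisable and separable and one exhausts $[0,\tau)$ by the compact intervals $[0,\tau-1/k]$; I would state this and omit the standard details.
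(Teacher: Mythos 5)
Your separability argument is fine (the reduction to countable dense sets in $\XX_{{\rm st},\sigma}$ for rational $\sigma<\tau(\overline{u})$, plus continuity of the restriction maps, is the standard route, and the paper itself offers no proof to compare against). The Hausdorff half, however, has a genuine gap at exactly the two sub-cases that rest on the $\min(\tau,\tau_M)$ constraint. In your ``cleaner choice'' for separating $\overline{u}\neq\star$ from $\star$ you conclude disjointness from the fact that every $\overline{w}$ in the chosen neighbourhood of $\overline{u}$ has $\tau(\overline{w})>\tau'>\tau_0/2$; but the neighbourhood of $\star$ is $\{\overline{w}:\min(\tau(\overline{w}),\tau_M(\overline{w}))<\tau_0/2\}$, and this condition can be met through $\tau_M(\overline{w})<\tau_0/2$ while $\tau(\overline{w})$ is large. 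The basic opens (\ref{eq: basic opens}) impose no \emph{lower} bound on $\tau_M$ of their elements (the third condition is an upper bound), and the metrics $d_{{\rm st},\tau'}$ metrize weak-type convergence in $H^1$ and $L^2_tH^2$, hence give no upper control on these norms. Concretely, perturbing $\overline{u}$ by a plane wave $n^{-1/2}(0,\sin(2\pi n x_1),0)$ as in (\ref{eq: bad u 1}) yields, for $n$ large, an element of $\XX_{{\rm st},\tau(\overline{u})}$ which lies in \emph{any} prescribed basic neighbourhood of $\overline{u}$ (it is weakly close on $[0,\tau']$ since its $L^2$ norm is $O(n^{-1/2})$, it has the same existence time, and it satisfies the third condition trivially because its $H^1$ norm exceeds $M$ already at $t=0$, so $\tau_M=0$) and simultaneously in any prescribed neighbourhood $\{\min(\tau,\tau_{M'})<\epsilon'\}$ of $\star$. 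The same perturbation applied to $\overline{v}$ defeats your separation of $\overline{u}$ from $\overline{v}$ in the case $\tau(\overline{u})<\tau(\overline{v})$ with $\overline{u}$ an initial segment of $\overline{v}$: your claim that choosing $\delta$ small ``excludes every path with existence time $\ge\tau'$'' is false, again because the minimum can be realised by $\tau_M$ rather than $\tau$. Only the sub-cases in which the two paths have distinct restrictions on some common interval, separated by disjoint $d_{{\rm st},\tau'}$-balls, go through as written.

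This is not merely a bookkeeping slip: for any metric $d_{{\rm st},\tau}$ of the coordinate/weak type (built from countably many test functionals), the construction above produces points of every basic neighbourhood of $\overline{u}$ that also lie in every basic neighbourhood of $\star$ (respectively, of every neighbourhood of $\overline{u}$ and of $\overline{v}\succ\overline{u}$), so these pairs cannot be separated by the stated neighbourhood base at all. A correct proof therefore needs some input beyond what you use — either additional strength of $d_{{\rm st},\tau}$ beyond metrizing weak convergence, or a sharpened form of the third condition in (\ref{eq: basic opens}) — and this is precisely the point that the paper's ``can easily be checked'' leaves unaddressed. As it stands, the Hausdorff part of your proof is incomplete, and the missing step is the substantive content of the lemma, since separability and Hausdorffness are what license the entropy method of Lemma \ref{entropymethod} in Section \ref{sec-9}.
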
  

\subsection{Relationship of Strong and Weak Solutions}	Before we prove Theorem \ref{thm: LDPstrong}, we first show a technical lemma, which will allow us to make arguments about the large deviations behaviour of  $\ED{\overline{U}}$ related to the arguments in the rest of the paper, which relate only to stochastic Leray solutions.
	\begin{lemma}\label{lemma: wsuniqueness}
(Weak-Strong uniqueness) Let $T>0, M>0$. For any strong solution $(\ED{\widetilde{U}},\widetilde{W})$ defined on a stochastic basis $(\widetilde{\Omega},\widetilde{\mathcal{F}},\{\widetilde{\mathcal{F}}(t)\}_{t\in[0,T]},\widetilde{\mathbb{P}})$, there exists a strong solution $\ED{\overline{U}}$ and a stochastic Leray solution $\ED{U}$, defined on a stochastic basis $({\Omega},{\mathcal{F}},\{{\mathcal{F}}(t)\}_{t\in[0,T]},{\mathbb{P}})$, such that $\ED{\overline{U}}=_\mathrm{d}\ED{\widetilde{U}}$ and, almost surely, $\ED{\overline{U}}\preceq \ED{U}$.
\end{lemma}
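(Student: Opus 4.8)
The plan is to construct the stochastic Leray solution $\ED{U}$ on the \emph{same} stochastic basis as the given strong solution $(\ED{\widetilde{U}}, \widetilde{W})$ (so that $\ED{\overline{U}}$ can just be taken to be $\ED{\widetilde{U}}$, with the identity in law being trivial), and then verify the pathwise inclusion $\ED{\overline{U}} \preceq \ED{U}$ by a weak-strong uniqueness argument patterned on Lemma \ref{weak-strong}. First I would invoke the existence result Proposition \ref{existence}: since $\ED{\widetilde{U}}(0)$ is $V$-valued, hence $H$-valued, and (under the integrability hypothesis, which we may assume as in Proposition \ref{prop: main existence}) satisfies the exponential moment bound (\ref{eq: initial data existence}), there is a stochastic Leray solution whose initial datum has the law of $\ED{\widetilde{U}}(0)$. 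The subtlety is that Proposition \ref{existence} produces a solution on a \emph{new} stochastic basis, with its \emph{own} divergence-free white noise, independent of the initial datum; to get both processes driven by the \emph{same} noise $\widetilde{W}$, I would instead re-run the Galerkin-compactness construction of Proposition \ref{existence} but carrying along the strong solution $\ED{\widetilde{U}}$ as an additional component of the tight family, so that the Skorokhod representation delivers a single probability space carrying $\ED{U}$, a copy $\ED{\overline{U}}$ of $\ED{\widetilde{U}}$, and a common white noise $W$ with respect to which both solve their respective equations. (Alternatively, and more cheaply: one runs the construction of Proposition \ref{existence} with the Galerkin approximations started at $P_m \ED{\widetilde{U}}(0)$ driven by the \emph{same} Brownian motions as $\ED{\widetilde{U}}$, and keeps $\ED{\widetilde{U}}$ itself in the tight family.)

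Once both processes live on one basis driven by $W$ and start from $\ED{U}(0) = \ED{\overline{U}}(0)$ almost surely, the core of the argument is the pathwise comparison. Fix $M$ and work on the event $\{t < \tau_M(\ED{\overline{U}})\}$, so that $\ED{\overline{U}} \in C([0,t];H^1)\cap L^2([0,t];H^2)$ with norms controlled by $M$; this is precisely the regularity needed to use $\ED{\overline{U}}$ as a test function. The plan is to subtract the weak formulation (\ref{weaksense}) for $\ED{U}$ from the (stronger, strong) equation for $\ED{\overline{U}}$, pair against $\ED{\overline{U}}$, and combine with the trajectorial energy inequality (\ref{eq: TEI}) for $\ED{U}$ — exactly as in the computation (\ref{compare energy inequalities})--(\ref{eq: bound on differnce2}) in Lemma \ref{weak-strong}, but now every term carries the stochastic integral $-\sqrt{\epsilon}\int_0^\cdot \langle \cdot, \nabla\cdot dW_\delta\rangle$ and the It\^o correction $\tfrac{\epsilon}{2}\|A^{1/2}\sqrt{Q_\delta}\|_{\rm HS}^2 t$. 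The crucial cancellation is that the noise enters $\ED{U}$ and $\ED{\overline{U}}$ \emph{identically} (same $W$), so in the difference $\ED{U} - \ED{\overline{U}}$ the stochastic terms and the It\^o corrections cancel, and one is left with a purely deterministic Gronwall inequality
\begin{equation*}
\frac12\|(\ED{U}-\ED{\overline{U}})(t)\|_H^2 + \int_0^t \|(\ED{U}-\ED{\overline{U}})(s)\|_V^2\, ds \le C\int_0^t \|\ED{\overline{U}}(s)\|_{L^\infty(\mathbb{T}^3)}^2 \|(\ED{U}-\ED{\overline{U}})(s)\|_H^2\, ds
\end{equation*}
valid for $0 \le t < \tau_M(\ED{\overline{U}})$, where I used the $H^1$-bound on $\ED{\overline{U}}$ and the three-dimensional Sobolev embedding $H^2 \hookrightarrow L^\infty$ to absorb the convective error term $\int_0^t \int_{\mathbb{T}^3} \nabla(\ED{U}-\ED{\overline{U}}):((\ED{U}-\ED{\overline{U}})\otimes \ED{\overline{U}})$ into the dissipation, just as in Lemma \ref{weak-strong}. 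Since $\int_0^{\tau_M(\ED{\overline{U}})} \|\ED{\overline{U}}(s)\|_{L^\infty}^2 ds < \infty$ on the relevant event, Gronwall forces $\ED{U}(s) = \ED{\overline{U}}(s)$ for all $s < \tau_M(\ED{\overline{U}})$. Letting $M \to \infty$, recalling that $\tau_M(\ED{\overline{U}}) \uparrow \tau^\star(\ED{\overline{U}}) = \tau^\epsilon$ almost surely (by maximality of the strong solution), we obtain $\ED{\overline{U}}(s) = \ED{U}(s)$ for all $s < \tau^\epsilon$, which is exactly $\ED{\overline{U}} \preceq \ED{U}$.

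The main obstacle I anticipate is not the Gronwall estimate — that is routine, and essentially identical to Lemma \ref{weak-strong} once the noise cancellation is observed — but rather the construction ensuring $\ED{U}$ and $\ED{\overline{U}}$ are defined on a common basis with a \emph{common} driving noise, while $\ED{U}$ is still a genuine stochastic Leray solution satisfying the full trajectorial energy inequality (\ref{eq: TEI}). In a probabilistically weak setting one cannot simply "solve (\ref{SNS-1}) with the noise of $\ED{\widetilde{U}}$"; one must re-derive the energy inequality for $\ED{U}$ through the limiting procedure of Proposition \ref{existence}, carrying $\ED{\widetilde{U}}$ along as a spectator in the tightness/Skorokhod argument and checking (via a L\'evy-characterisation argument as in Step 2 of Proposition \ref{existence}) that the limiting noise $W$ is common to both. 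A secondary technical point is justifying that $\ED{\overline{U}}$ is an admissible test function in the weak formulation of $\ED{U}$ up to time $\tau_M(\ED{\overline{U}})$: this is where the $C_tH^1 \cap L^2_t H^2$ regularity of the strong solution (Definition \ref{strong-solution}(ii)) is used, together with the standard density/approximation argument cited via \cite[Lemma 6.6, Lemma 8.18]{RRS}. Once these structural points are in place, the uniqueness-up-to-blowup conclusion follows and, in particular, the almost sure inclusion $\ED{\overline{U}} \preceq \ED{U}$ together with $\ED{\overline{U}} =_{\rm d} \ED{\widetilde{U}}$ gives the statement of the lemma.
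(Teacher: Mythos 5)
Your proposal is correct and follows essentially the same route as the paper: the paper also carries the given strong solution along as an extra component in the Galerkin tightness/Skorokhod argument (with approximations driven by the same Brownian motions), obtaining on a new basis a copy $\ED{\overline{U}}=_{\mathrm{d}}\ED{\widetilde{U}}$ and a stochastic Leray solution $\ED{U}$ driven by the common limiting noise, and then runs the weak-strong Gr\"onwall comparison up to $\tau^\epsilon$, with the martingale terms and It\^o corrections cancelling exactly as you describe (the paper implements the cross term via It\^o's formula for $\langle \ED{U}(t),\ED{\overline{U}}(t)\rangle$ rather than a test-function pairing, but this is the same computation).
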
 Since the laws of $\ED{\widetilde{U}}, \ED{\overline{U}}$ are the same, so are the large deviations, and we lose nothing moving from $\ED{\widetilde{U}}$, which was an \emph{ a priori } arbitrary strong solution, to the copy $\ED{\overline{U}}$ which is an initial segment of a stochastic Leray solution.  In the sequel we may therefore work only with strong solutions which are the initial segments of a stochastic Leray solution driven by the same divergence-free white noise $W$.
\begin{proof} Let us fix a given $\ED{\widetilde{U}}, \widetilde{W}$. For this divergence-free white noise $\widetilde{W}$, we can repeat Lemma \ref{lemma: properties of galerkin} to construct a sequence of Galerkin approximations $\EDM{\widetilde{U}}$ with initial data $\EDM{\widetilde{U}}(0):=P_m\ED{\widetilde{U}}(0)$. Repeating the argument of Proposition \ref{existence}, it follows that $$(\ED{\widetilde{U}}, \EDM{\widetilde{U}}, \{\widetilde{\beta}^{\zeta,i}:\zeta\in \mathcal{B}, 1\le i\le 3\}, \widetilde{M}^m)_{m\ge 1}$$ are tight in $\XX_{\rm st, loc}\times\XX^+\times C([0,T], \mathbb{R}^{\mathcal{B}\times \{1,2,3\}})\times C([0,T])$, where, as before, $\widetilde{\beta}^{\zeta,i}, \zeta\in \mathcal{B}, 1\le i\le 3$ are the 1d-Brownian motions driving $\widetilde{W}$, and $\widetilde{M}^m$ are the stochastic integrals appearing in the energy (in)equality. We may therefore use the Prohorov and Skorokhod theorems in this (further augmented) space to find a new stochastic basis $(\Omega, \mathcal{F},\{\mathcal{F}(t)\}_{t\in[0,T]}, \mathbb{P})$, on which are defined  $(\overline{U}^{\epsilon, (m)}_\delta, \EDM{{U}}, \{{\beta}^{\zeta,i, m}:\zeta\in \mathcal{B}, 1\le i\le 3\}, {M}^m)$ with the same laws as the original objects, and converging almost surely to a limit $(\ED{\overline{U}}, \ED{{U}}, \{{\beta}^{\zeta,i}:\zeta\in \mathcal{B}, 1\le i\le 3\}, {M})$. The identification of $\ED{U}$ as a stochastic Leray solution, driven by the divergence-free white noise $W$ corresponding to $\beta^{\zeta,i}, \zeta\in \mathcal{B}, 1\le i\le 3$ and starting at $\ED{\overline{U}}(0)$ is as in Proposition \ref{existence}. For the strong solution, since each $(\overline{U}^{\epsilon, (m)}_\delta, \{{\beta}^{\zeta,i,m}:\zeta\in \mathcal{B}, 1\le i\le 3\})$ has the same law under $\mathbb{P}$ as the given strong solution $(\ED{\widetilde{U}}, \{\widetilde{\beta}^{\zeta,i}, \zeta\in \mathcal{B}, 1\le i\le 3\})$ under $\widetilde{\mathbb{P}}$, we may pass to the limit in the definition to see that $\ED{\overline{U}}$ has the same law as $\ED{\widetilde{U}}$ and is a strong solution driven by $W$. \medskip \\ It remains to show that $\ED{\overline{U}}$ is almost surely an initial segment of $\ED{U}$, for which we will follow the deterministic weak-strong uniqueness argument in \cite[Theorem 6.10]{RRS}. Since $\ED{\overline{U}}$ is a strong solution, we have sufficient regularity to apply It\^o's formula \cite[Theorem 3.1]{Kry} to $\langle \ED{U}(t),\ED{\overline{U}}(t)\rangle$. Combining with the energy inequalities for $\ED{\overline{U}}, \ED{U}$ and calculating $\|\ED{U}-\ED{\overline{U}}\|_H^2$, both the martingale terms and the contributions from the It\^o correction cancel, and we find, $\mathbb{P}$-almost surely,
\begin{equation}\begin{split}
\frac{1}{2}\|\ED{U}(t)-\ED{\overline{U}}(t)\|_{H}^2& +\int_0^t\|\nabla(\ED{U}-\ED{\overline{U}})(s)\|_{H}^2ds\\
\le &\int_0^t\int_{\mathbb{T}^3}\nabla(\ED{U}-\ED{\overline{U}}):((\ED{U}-\ED{\overline{U}})\otimes\ED{\overline{U}})dxds,
\end{split}\end{equation}
for $t\in[0,\tau^\epsilon)$. The same functional estimates as for the deterministic weak-strong uniqueness, see for example \cite[Theorem 6.10]{RRS}, show that, $\mathbb{P}$-almost surely,
\begin{align*}
\int_0^t\int_{\mathbb{T}^3}\nabla(\ED{U}-\ED{\overline{U}}):((\ED{U}-\ED{\overline{U}})\otimes\ED{\overline{U}})dxds\leq&\int_0^t\|\ED{\overline{U}}\|_{L^{\infty}(\mathbb{T}^3)}\|\ED{U}-\ED{\overline{U}}\|_H\|\ED{U}-\ED{\overline{U}}\|_Vds\\
\leq& \frac{1}{2}\int_0^t\|\ED{\overline{U}}\|_{H^2(\mathbb{T}^3)}^2\|\ED{U}-\ED{\overline{U}}\|_H^2ds+\frac{1}{2}\int_0^t\|\ED{U}-\ED{\overline{U}}\|_V^2ds.
\end{align*}
Since $\ED{\overline{U}}\in L^2_{\rm loc}([0,\tau^\epsilon), H^2(\mathbb{T}^3))$, we may use Gr\"onwall's lemma to see that, almost surely, for all $t<\tau^\epsilon$, $\ED{\overline{U}}(t)=\ED{U}(t)$, as desired. \end{proof}	
		\subsection{Proof of the Theorem}
\begin{proof}[Proof of Theorem \ref{thm: LDPstrong}]
As in the theorem, let $\overline{u} \in \XX_{\rm st, loc}$, and let $\overline{v}\preceq \overline{u}$ be an initial segment. As remarked below Lemma \ref{lemma: wsuniqueness}, we may assume that the strong solution $\ED{\overline{U}}$ is an initial segment of a stochastic Leray solution of (\ref{SNS-1}), driven by a common divergence-free white noise $W$, and let $\overline{u}^\star$ be the maximal strong solution to the Navier-Stokes equations started from $\overline{u}^\star_0$. In Steps 1-2 below we will show that, if $\star \prec \overline{v}\prec \overline{u}$ and there exists a sequence of probability measures $\mathbb{Q}^\epsilon\ll \mathbb{P}$ under which $\ED{\overline{U}}\to \overline{u}$ in $\XX_{\rm st, loc}$ with $\limsup_{\epsilon}\epsilon {\rm Ent}(\mathbb{Q}^\epsilon|\mathbb{P})<\infty$, then we can construct $\tilde{\mathbb{Q}}^\epsilon$ under which $\ED{\overline{U}}\to \overline{v}$ with $$\limsup_{\epsilon}\epsilon {\rm Ent}(\tilde{\mathbb{Q}}^\epsilon|\mathbb{P})\le \limsup_{\epsilon}\epsilon {\rm Ent}(\mathbb{Q}^\epsilon|\mathbb{P}). $$  The assertion (\ref{eq: cheap blowup}) then follows by using both directions of the entropy method in Lemma \ref{entropymethod}, and observing that the conclusion is trivial if no such sequence of $\mathbb{Q}^\epsilon$ exist, since the right-hand side of (\ref{eq: cheap blowup}) is then $-\infty$. We will then discuss the necessary modification to show (\ref{eq: instant blowup}) at the end, and (\ref{eq: inf rate}) will follow from the ideas already established for the upper bound of Theorem \ref{thrm: LDP}.

\textbf{Step 1. Construction of Paths Rapidly Losing Regularity} Let us suppose that $\star\prec \overline{v}\prec \overline{u}$; to shorten notation, write $\tau:=\tau(\bar{u})$ and $\tau':=\tau(\overline{v})$. \bigskip \\ Let us observe that, under any change of measures $\mathbb{Q}^\epsilon$ which make $\ED{\overline{U}}\to \overline{u}$ in $\XX_{\rm st, loc}$, the stochastic Leray solution $\ED{U}$ which extends $\ED{\overline{U}}$ concentrates on paths $u\in \XX$ which extend $\overline{u}$. In particular, if $\mathbb{Q}^\epsilon$ are chosen so that $\epsilon {\rm Ent}(\mathbb{Q}^\epsilon|\mathbb{P})$ remains bounded, we may repeat the arguments of Theorem \ref{thrm: LDP} on $[0,\tau]$ to see that there exists $g\in L^2([0,\tau],\mathcal{M})$ such that $\overline{u}$ solves (\ref{control}) up until its time of existence $\tau$. \bigskip \\  We will now exhibit a sequence of trajectories $\overline{v}^{(n)}$ with the same blowup time $\tau(\overline{v}^{(n)})=\tau(\overline{u})$ and $\overline{v}^{(n)}\to \overline{v}$ with forcings $g^{(n)}-g\to 0$ in $L^2([0,\tau],\mathcal{M})$. Let $\overline{w}^{(n)}=\overline{w}^{(n,1)}-\overline{w}^{(n,2)}$ be the plane wave, defined on $t\in [\tau',\tau)$, given by setting
\begin{equation}\label{eq: bad u 1}
\overline{w}^{(n,1)}(t,x)=n^{-1/2}(0,\sin(2\pi nx_1),0)\exp\Big(-4\pi^2 n^2\Big|t-\tau'-\frac{1}{n}\Big|\Big);
\end{equation} \begin{equation}\label{eq: bad u 1'}
\overline{w}^{(n,2)}(t,x)=n^{-1/2}(0,\sin(2\pi nx_1),0)\exp\Big(-4\pi^2n-4\pi^2 n^2\Big|t-\tau'\Big|\Big)	
\end{equation} and consider the perturbed trajectory \begin{equation}
	\label{eq: bad} \bar{v}^{(n)}(t):=\begin{cases}\overline{u}(t), & 0\le t\le \tau'; \\ \overline{u}(t)+\bar{w}^{(n)}(t) & \tau'<t\le \tau. \end{cases}
\end{equation}
First, we note that $\overline{v}^{(n)}$ is continuous in time at the intermediate time $t=\tau'$ since the contributions from $\overline{w}^{(n,1)}, \overline{w}^{(n,2)}$ cancel. Since $\overline{u}(t)$ is bounded in $H^1(\mathbb{T}^3)$ away from $t=\tau$, it follows that  $\|\overline{v}^{(n)}(\tau'+n^{-1})\|_{H^1(\mathbb{T}^3)}\rightarrow \infty$ as $n\rightarrow\infty$.  Since $\overline{v}^{(n)}=\overline{u}=\overline{v}$ on $[0,\tau']$, it follows that $\overline{v}^{(n)}\to \overline{v}$ in $\XX_{\rm st, loc}$. Letting $g\in L^2([0,\tau],\mathcal{M})$ be the control for $\overline{u}$ as remarked above, a computation shows that $\overline{v}^{(n)}$ solves the skeleton equation (\ref{eq: sk}) on $[0,\tau)$ with control \begin{equation}\label{eq: gn} g^{(n)}(s,x)=g(s,x)+2\nabla w^{(n,1)}(s,x)I_{[\tau',\tau'+n^{-1}]}(s)+ (w^{(n)}\otimes \overline{u}(s,x)+\overline{u}\otimes w^{(n)}(s,x))I_{[\tau', \tau)}(s). \end{equation} We further see that \begin{equation}\label{eq: bad g} \|g^{(n)}-g\|_{L^2([\tau',\tau],\mathcal{M})}\to 0 \end{equation}  by direct computation on the first new term of (\ref{eq: gn}), and on the cross terms using that $\overline{u}\in L^\infty([0,\tau),H)$ and $\overline{w}^{(n)}\to 0$ in $L^\infty([0,T],L^\infty(\mathbb{T}^3))$.

\textbf{Step 2. Construction of a Change of Measures, Convergence and Entropy Estimate} We now start from the given change of measures $\mathbb{Q}^\epsilon\ll \mathbb{P}$ under which $\ED{\overline{U}}\to \overline{u}$ in $\XX_{\rm st, loc}$, and construct a new change of measures $\tilde{\mathbb{Q}}^\epsilon\ll \mathbb{P}$ under which $\ED{U}\to \overline{v}$. We set \begin{equation}
	Z^{n,\epsilon}_t:=\exp\left(-\frac{1}{\sqrt{\epsilon}}\int_{t\land \tau'}^{t}\langle \sqrt{Q_\delta}g^{(n)}, dW(s)\rangle-\frac1{2\epsilon}\int_{t\land \tau'}^t \|\sqrt{Q_\delta}g^{(n)}(s)\|_H^2 ds \right)
\end{equation} and define $\tilde{\mathbb{Q}}^{n,\epsilon}$ by setting, for $n>(\tau-\tau')^{-1}$, \begin{equation}
	\frac{d\tilde{\mathbb{Q}}^{n,\epsilon}}{d\mathbb{P}}:=\left.\frac{d\mathbb{Q}^\epsilon}{d\mathbb{P}}\right|_{\mathcal{F}(\tau)}Z^{n,\epsilon}_{\tau'+n^{-1}}.
\end{equation}  

\textbf{Step 2a. Convergence of the strong solution $\ED{\overline{U}}$}  We now deal with the convergence of the strong solution under the measures $\tilde{\mathbb{Q}}^{n(\epsilon), \epsilon}$ as $\epsilon\to 0$, for some sequence $n(\epsilon)\to \infty$ to be chosen. We consider the time intervals $[0,\tau'], [\tau', \tau'+n^{-1}]$ separately. On the initial time interval $[0,\tau']$, we observe that the law of $(\ED{\overline{U}}(t), 0\le t<\tau^\epsilon \land \tau')$ is the same under $\tilde{\mathbb{Q}}^{n, \epsilon}$ as under $\mathbb{Q}^\epsilon$ and, by definition of $\mathbb{Q}^\epsilon$ and the convergence in $\XX_{\rm st, loc}$, $\mathbb{Q}^\epsilon(\tau^\epsilon>\tau')\to 1$ and, for any $\mathfrak{t}_{{\rm st},1}$ neighbourhood  $\mathcal{V}\ni \overline{u}^{\tau'}$, $\mathbb{Q}^\epsilon((\ED{\overline{U}})^{\tau'}\in \mathcal{V})\to 1$. Both statements therefore remain true with $\tilde{\mathbb{Q}}^{n,\epsilon}$ in place of $\mathbb{Q}^\epsilon$, and indeed uniformly in $n$. \bigskip \\ We next consider the time interval $[\tau', \tau+n^{-1}]$, as $\epsilon\to 0$ with $n$ fixed. On this time interval, we repeat the arguments of Lemma \ref{lemma: existence of tilted} to see that, under the measure $\tilde{\mathbb{Q}}^{n,\epsilon}$, the weak solution $\ED{U}$ is a stochastic Leray solution to the equation \begin{equation}\label{scontrol-strong}
\partial_t\ED{U}=-A \ED{U}-\mathbf{P}(\ED{U}\cdot\nabla)\ED{U}-\sqrt{\epsilon}\nabla\cdot\tilde{\xi}_\delta-\nabla\cdot \Big(\sqrt{Q_{\delta}}g^{(n)}\Big)
\end{equation} for $g^{(n)}$ as in Step 1, for a $\tilde{\mathbb{Q}}^{n,\epsilon}$ divergence-free white noise $\tilde{\xi}$, and with $\ED{U}(\tau')\to \overline{u}(\tau')$ weakly in $H$ thanks to the argument of the previous paragraph. Since the paths $\overline{v}^{(n)}$ constructed belong to $L^\infty_tH^1_x$ on $[\tau', \tau'+n^{-1}]\subset \subset [0,\tau)$, we may use the Sobolev embedding $H^1(\mathbb{T}^3)\hookrightarrow L^6(\mathbb{T}^3)$ to repeat the proof of Lemmata \ref{weak-strong} and \ref{stationary} on the time interval $[\tau', \tau'+n^{-1}]$ to conclude that $\ED{U}\to \overline{v}^{(n)}$ as $\epsilon \to 0$ with $n$ fixed, in the topology of $\XX|_{[\tau', \tau'+n^{-1}]}$, in $\mathbb{Q}^{n, \epsilon}$-probability. Using the lower semicontinuity of $\|\cdot\|_{H^1(\mathbb{T}^3)}$ for the weak convergence in $H$, we may choose $\epsilon_n\downarrow 0$ such that, for all $0<\epsilon<\epsilon_n$, $$\tilde{\mathbb{Q}}^{n, \epsilon}\left(\|\ED{U}(\tau'+n^{-1})\|_{H^1(\mathbb{T}^3)}>M_n\right)>1-n^{-1} $$ where we choose $$M_n:=\|\overline{v}^{(n)}(\tau'+n^{-1})\|_{H^1(\mathbb{T}^3)}-n^{-1}. $$ We now relate this to the behaviour of the strong solution $\ED{\overline{U}}$. On the event displayed above, $\tau_{M_n}(\ED{{U}})<\tau'+n^{-1}$, and we consider separately the cases $\tau^\epsilon\le  \tau'+n^{-1}, \tau^\epsilon> \tau'+n^{-1}$. On the first event, since $\tau_M(\ED{\overline{U}})\uparrow \tau^\epsilon$, it follows that $\tau_{M_n}(\ED{\overline{U}})<\tau^\epsilon\le \tau'+n^{-1}$. On the other hand, if $\tau^\epsilon> \tau'+n^{-1}$, we use the fact that $\ED{\overline{U}}$ is an initial segment of the weak solution $\ED{U}$ up to $\tau^\epsilon$ to reach the same conclusion $\tau_{M_n}(\ED{\overline{U}})<\tau'+n^{-1}$. In particular, we have shown that $$\tilde{\mathbb{Q}}^{n, \epsilon}\left(\tau_{M_n}(\ED{\overline{U}})<\tau'+n^{-1}\right)\to 1 $$ as $\epsilon\to 0$ with $n$ fixed. We finally invert the relationship between $\epsilon, n$ by choosing $n(\epsilon)=n$ for $\epsilon\in (\epsilon_{n+1}, \epsilon_n)$ and take $\tilde{\mathbb{Q}}^\epsilon:=\tilde{\mathbb{Q}}^{n(\epsilon),\epsilon}$ to see that $M_{n(\epsilon)}\to \infty$ and \begin{equation}
	\label{eq: accumulation of blowups} \tilde{\mathbb{Q}}^\epsilon\left(\tau_{M_{n(\epsilon)}}(\ED{\overline{U}})<\tau(\overline{v})+n^{-1}\right)\to 1.
\end{equation} Together with the analysis on $[0, \tau(\overline{v})]$, we conclude that, under $\tilde{\mathbb{Q}}^\epsilon$, we have the full convergence $\ED{\overline{U}}\to \overline{v}$ in $\XX_{\rm st, loc}$.  \\

\textbf{Step 2b. Entropy Estimate} It remains to estimate the relative entropy of the new measures ${\rm Ent}(\tilde{\mathbb{Q}}^\epsilon|\mathbb{P})$ in terms of ${\rm Ent}(\tilde{\mathbb{Q}}^\epsilon|\mathbb{P})$, and to show that any increase is at most of sub-leading order. Using the definition of $\tilde{\mathbb{Q}}^\epsilon$, we find \begin{equation}
 \begin{split}\label{eq: new entropy}
 	{\rm Ent}\left(\tilde{\mathbb{Q}}^\epsilon|\mathbb{P}\right)& =\mathbb{E}_{\tilde{\mathbb{Q}}^\epsilon}\left[\log \left.\frac{d\mathbb{Q}^\epsilon}{d\mathbb{P}}\right|_{\mathcal{F}(\tau)}+ \log Z^{n(\epsilon), \epsilon}_{\tau'+n(\epsilon)^{-1}}\right] \\ &= \mathbb{E}_{\tilde{\mathbb{Q}}^\epsilon|_{\mathcal{F}(\tau)}}\left[\log \left.\frac{d\mathbb{Q}^\epsilon}{d\mathbb{P}}\right|_{\mathcal{F}(\tau)}\right]+ \mathbb{E}_{\tilde{\mathbb{Q}}^\epsilon}\left[\log Z^{n(\epsilon), \epsilon}_{\tau'+n(\epsilon)^{-1}}\right] \\[1ex] & ={\rm Ent}\left(\mathbb{Q}^\epsilon|_{\mathcal{F}(\tau)}\big|\mathbb{P}|_{\mathcal{F}(\tau)}\right)+\frac1{2\epsilon}\int_{\tau'}^{\tau'+n(\epsilon)^{-1}}\|\sqrt{Q_\delta}g^{(n(\epsilon))}\|_{\mathcal{M}}^2 dt.
 \end{split}	
 \end{equation} The first term is bounded above by ${\rm Ent}(\mathbb{Q}^\epsilon|\mathbb{P})$ thanks to the information processing inequality, while multiplying the second term by $\epsilon$ gives $$ \frac1{2}\int_{\tau'}^{\tau'+n(\epsilon)^{-1}}\|\sqrt{Q_\delta}g^{(n(\epsilon))}\|_{\mathcal{M}}^2 dt \le \int_{\tau'}^{\tau'+n(\epsilon)^{-1}} \|g\|_{\mathcal{M}}^2 dt + \|g^{(n(\epsilon))}-g\|_{L^2([\tau',\tau],\mathcal{M})}^2. $$ Of these, the first term converges to $0$ by dominated convergence in $dt$, and the latter converges to $0$ thanks to (\ref{eq: bad g}). Returning to (\ref{eq: new entropy}), we have proven that \begin{equation}
 	\limsup_{\epsilon} \epsilon{\rm Ent}\left(\tilde{\mathbb{Q}}^\epsilon|\mathbb{P}\right)\le \limsup_{\epsilon} \epsilon{\rm Ent}\left({\mathbb{Q}}^\epsilon|\mathbb{P}\right)
 \end{equation} and the proof of (\ref{eq: cheap blowup}) is complete in light of the reduction discussed above Step 1. In order to prove (\ref{eq: instant blowup}), one repeats the same construction, starting with the original probability measure $\mathbb{P}$ instead of $\mathbb{Q}^\epsilon$ and replacing $\tau'=0$, so that the dynamics are modified only on $[0, n(\epsilon)^{-1}]$. In this way, no term analagous to the first term in (\ref{eq: new entropy}) appears and the new measures $\tilde{\mathbb{Q}}^\epsilon$ which force convergence to the trivial path $\star$ have ${\rm Ent}(\tilde{\mathbb{Q}}^\epsilon|\mathbb{P})=\mathfrak{o}(\epsilon^{-1}).$
 
\textbf{Step 3. Large Deviations at Slower Speeds} We now deal with the other part of the dichotomy, where we show that any other speed $r_\epsilon \ll \epsilon^{-1}$ only allows limit paths $\overline{v}\preceq \overline{u}^\star$, which we recall is the unique maximal strong solution starting at ${u}^\star_0$. Let us fix $\overline{v}\in \XX_{\rm st, loc}$ with $\overline{v}\npreceq \overline{u}^\star$. Thanks to the uniqueness of strong solutions up to their $H^1(\mathbb{T}^3)-H^2(\mathbb{T}^3)$ blowup, as in Lemmata \ref{weak-strong}, it follows either that there exists $\psi \in H, \theta>0$ with $\langle \psi, \bar{v}(0)-u^\star_0\rangle>2\theta$, \textbf{or} there exists $\varphi\in C^\infty_{\mathrm{d.f.}}([0,T]\times\mathbb{T}^3)$, supported on $t\in [0,\tau(\overline{v}))$ such that $\Lambda(\varphi, \overline{v})>\theta$, \textbf{or} that there exists $t<\tau(\overline{v})$ and $\theta>0$ such that the energy inequality fails at time $t$ by at least $2\theta$: \begin{equation}
	\frac12\|\overline{v}(t)\|_H^2+\int_0^t\|\overline{v}(s)\|_V^2 ds>\frac12\|\bar{u}(0)\|_H^2+2\theta.
\end{equation} In each of the three cases, we now construct an open set $\overline{\mathcal{U}}\ni \overline{v}$ such that \begin{equation}\label{eq: strong ldp slower speed nontrivial}\limsup_{\epsilon\to 0}\epsilon \log \mathbb{P}\left(\ED{\overline{U}}\in \overline{\mathcal{U}}\right)<0. \end{equation} Since $\epsilon r_\epsilon\to 0$, it follows that $$\limsup_{\epsilon\to 0}r_\epsilon^{-1} \log \mathbb{P}\left(\ED{\overline{U}}\in \overline{\mathcal{U}}\right)=-\infty $$ which concludes the theorem. 

\textbf{Step 3a. Different Initial Data} We first deal with the case where $\overline{v}(0)\neq u^\star_0$. In this case, we pick $\theta>0, \psi\in H$ such that $\langle \psi, \overline{v}(0)-u^\star_0\rangle>2\theta$. Recalling that $\overline{v}\neq \star$, it follows that $\tau(\overline{v})>0$, and we choose the open set to be \begin{equation}
	\overline{\mathcal{U}}:=\left\{\overline{w}\in \XX_{\rm st, loc}: \tau(\overline{w})>\frac{\tau(\overline{v})}{2}, \langle \psi, \overline{w}(0)-u^\star_0\rangle>\theta \right\}
\end{equation} which can be checked to be open using the basic open sets specified in (\ref{eq: basic opens}). Since $\ED{\overline{U}}(0)$ satisfies a good LDP in the weak topology of $H$ with rate function $\mathcal{I}_0$, it follows that \begin{equation} \begin{split}
	\limsup_\epsilon \epsilon \log \PP\left(\ED{\overline{U}}\in \overline{\mathcal{U}}\right)& \le 	\limsup_\epsilon \epsilon \log \PP\left(\langle \psi, \ED{\overline{U}}(0)-u^\star_0\rangle\ge \theta\right) \\ & \le -\inf\left\{\mathcal{I}_0(u): u\in H, \langle \psi, u-u^\star_0\rangle \geq\theta\right\}<0\end{split}
\end{equation} where in the final line, we observe that the closed set does not contain $u^\star_0$, and we have used the goodness of the rate function $\mathcal{I}_0$ and the hypothesis that $\mathcal{I}_0$ has the unique zero at $u^\star_0$ to see that the infimum is strictly positive. 

\textbf{Step 3b. Evolution different from Navier-Stokes} The second possibility described above is if the partial path $\overline{v}$ fails to be a weak solution of the Navier-Stokes equations on some interval $[0, t]\subset \subset [0,\tau(\overline{v}))$. In this case, there exists a  test function $\varphi\in C^\infty_{\rm d.f.}([0,T]\times\mathbb{T}^3)$ supported on $[0,t]\times \mathbb{T}^3$ so that, extending the definition (\ref{Fu}) in a natural way to partial paths and functions on their supports, we have $\Lambda(\varphi, \overline{v})>2\theta>0$. We now take the open set \begin{equation}
	\overline{\mathcal{U}}:=\left\{\overline{w}\in \XX_{\rm st, loc}: \tau(\overline{w})>t, \Lambda(\varphi, \overline{w})>\theta\right\}.
\end{equation} To estimate the probability that $\ED{\overline{U}}\in \overline{\mathcal{U}}$, we observe that on this event, the stochastic Leray solution $\ED{U}$ with $\ED{\overline{U}}\preceq \ED{U}$ lies in the open set in $\XX$ \begin{equation}
	\mathcal{U}:=\left\{w\in \XX: \Lambda(\varphi, w)>\theta\right\}
\end{equation} and, repeating the arguments leading to (\ref{eq: UB with phi psi}), we have \begin{equation}
	\begin{split}\limsup_\epsilon \epsilon \log \mathbb{P}\left(\ED{\overline{U}}\in \overline{\mathcal{U}}\right) & \le \limsup_\epsilon \epsilon \log \mathbb{P}\left(\ED{{U}}\in \mathcal{{U}}\right) <0.\end{split}
\end{equation} 

\textbf{Step 3c. Violation of the Energy Identity} The previous two substeps prove the desired assertion (\ref{eq: strong ldp slower speed nontrivial}) in all cases \emph{except} when $\overline{v}$ is a weak solution to Navier-Stokes starting at $\overline{v}(0)=u^\star_0$. In the final case, in which the energy identity is violated at some time $t<\tau(\overline{v})$, we set \begin{equation}
	\overline{\mathcal{U}}:=\left\{\overline{w}\in \XX_{\rm st, loc}: \tau(\overline{w})>t, \frac12 \|\overline{w}(t)\|_H^2+\int_0^t\|\overline{w}(s)\|_V^2 ds>\|u^\star_0\|_H^2+\theta \right\}
\end{equation} which can be seen to be open in $\XX_{\rm st, loc}$ using the lower semicontinuity of $\overline{w}\mapsto \frac12 \|\overline{w}(t)\|_H^2+\int_0^t\|\overline{w}(s)\|_V^2 ds$ in $\XX_{{\rm st},\tau}$. The argument proceeds as in the previous case to yield, in light of Lemma \ref{lemma: improved UB'}, \begin{equation}
	\begin{split}
		\limsup_\epsilon \epsilon \log \PP\left(\ED{\overline{U}}\in \overline{\mathcal{U}}\right) &\le \limsup_\epsilon \epsilon \log \PP\left(\frac12\|\ED{U}(t)\|_H^2+\int_0^t\|\ED{U}(s)\|_V^2 ds> \frac12 \|u^\star_0\|_H^2+\theta\right) \\& <0.
	\end{split}
\end{equation} This completes the proof of (\ref{eq: strong ldp slower speed nontrivial}) in each of the three possible cases, and hence the theorem.  \end{proof}

\noindent{\bf  Acknowledgements}\quad Benjamin Gess acknowledges support by the Max Planck Society through the Research Group ``Stochastic Analysis in the Sciences". This work was funded by the Deutsche Forschungsgemeinschaft (DFG, German Research Foundation) via IRTG 2235 - Project Number 282638148, and cofunded by the European Union (ERC, FluCo, grant agreement No. 101088488). Views and opinions expressed are however those of the author(s)only and do not necessarily reflect those of the European Union or of the European Research Council. Neither the European Union nor the granting authority can be held responsible for them.

\bibliographystyle{alpha}
\bibliography{LDPNSre.bib}

\end{document}